\numberwithin{equation}{section}
\numberwithin{figure}{section}
\theoremstyle{plain}
\newtheorem{theorem}{Theorem}[section]
\newtheorem{lemma}[theorem]{Lemma}
\newtheorem{proposition}[theorem]{Proposition}
\theoremstyle{definition}
\newtheorem{definition}[theorem]{Definition}
\newtheorem{remark}[theorem]{Remark}
\newtheorem{example}[theorem]{Example}
\newtheorem{problem}[theorem]{Problem}
\newtheorem{condition}[theorem]{Condition}
\newcommand{\affine}{\mathbb{C}}
\newcommand{\const}{\mathrm{const}}
\newcommand{\norm}[1]{\left|\!\left|#1\right|\!\right|}
\newcommand{\diam}{\mathrm{Diam}}
\newcommand{\moduli}{\mathcal{M}}
\newcommand{\widim}{\mathrm{Widim}}
\newcommand{\dist}{\mathrm{dist}}
\newcommand{\area}{\mathrm{Area}}
\newcommand{\nnorm}[1]{\left|\!\left|\!\left|#1\right|\!\right|\!\right|}
\newcommand{\mmdim}{\mathrm{dim}_{\mathrm{M}}}
\begin{document}

\title[Mean dimension of the dynamical system of Brody curves]
{Mean dimension of the dynamical system of Brody curves} 

\author[M. Tsukamoto]{Masaki Tsukamoto}

\subjclass[2010]{32H30, 54H20}

\keywords{Brody curve, dynamical system, mean dimension, metric mean dimension}

\thanks{This paper was supported by Grant-in-Aid for Young Scientists (B) 
25870334 from JSPS}

\date{\today}

\maketitle

\begin{abstract}
Mean dimension measures the size of an infinite dimensional dynamical system.
Brody curves are one-Lipschitz entire holomorphic curves in the projective space, and 
they form a topological dynamical system.
Gromov started the problem of estimating its mean dimension in the paper of 1999.
We solve this problem.
Namely we prove the exact mean dimension formula of the 
dynamical system of Brody curves.
Our formula expresses the mean dimension by the energy density of Brody curves.
The proof is based on a novel application of the metric mean dimension theory of 
Lindenstrauss and Weiss.
\end{abstract}

\section{Mean dimension formula} \label{section: mean dimension formula}

Let $z=x+\sqrt{-1}y$ be the standard coordinate in $\affine$.
Let $f=[f_0:f_1:\cdots:f_N]:\affine \to \affine P^N$ be a holomorphic curve
($f_i$: holomorphic function).
We define the \textbf{spherical derivative}
$|df|(z)\geq 0$ by 
\[ |df|^2(z) := \frac{1}{4\pi}\Delta\log(|f_0|^2+|f_1|^2+\dots+|f_N|^2) \quad 
   \left(\Delta := \frac{\partial^2}{\partial x^2}+\frac{\partial^2}{\partial y^2}\right).\]
This is the local Lipschitz constant. 
For a unit tangent vector 
$u\in T_z\affine$ the Fubini--Study length of $df(u)\in T_{f(z)}\affine P^N$ is equal to the spherical derivative 
$|df|(z)$.
A holomorphic curve $f:\affine\to \affine P^N$ is called a Brody curve if it is $1$-Lipschitz, namely $|df|\leq 1$
all over the plane.
The name comes from the work of Brody \cite{Brody}.
He proved that a projective variety is Kobayashi hyperbolic if and only if 
it does not contain a non-constant Brody curve.
(See Duval \cite{Duval} for a much deeper version of this 
\textit{Bloch--Brody principle}.)
The original motivation for the study of Brody curves comes from 
this connection to the hyperbolicity.

Recently new waves have begun, and 
Brody curves attract new interests of several authors
\cite{Barrett--Eremenko, Costa, Costa--Duval, Duval, Eremenko, 
Matsuo--Tsukamoto Brody curves, Tsukamoto moduli space of Brody curves, Tsukamoto packing, 
Winkelmann}.
One source of new interests is the work of Gromov \cite{Gromov}.
He introduced a new topological invariant of dynamical systems called \textbf{mean dimension},
and proposed a program to study many infinite dimensional dynamical systems in geometric analysis
from the viewpoint of this invariant.
\textit{The dynamical system consisting of Brody curves} is the simplest example in this program.
The purpose of our paper is to show that this perspective reveals a
new fundamental structure in holomorphic curve theory.

We define $\moduli(\affine P^N)$ as the space of all Brody curves $f:\affine \to \affine P^N$
endowed with the compact-open topology.
This is compact and the group $\affine$ continuously acts on it by 
\[ \affine \times \moduli(\affine P^N)\to \moduli(\affine P^N), \quad 
   (a,f(z))\mapsto f(z+a).\]
Our main object is the dynamics of this action.
The space $\moduli(\affine P^N)$ is infinite dimensional, 
and the topological entropy of the $\affine$-action is also infinite.
So $\moduli(\affine P^N)$ is very large.
Mean dimension is precisely an invariant introduced for this kind of large dynamical systems.
Its definition is reviewed in Subsection \ref{subsection: mean dimension}.
Here we explain intuition about this concept.
A basic example is the shift action of $\mathbb{Z}$ on the Hilbert cube $[0,1]^{\mathbb{Z}}$.
This is infinite dimensional and of infinite topological entropy.
Its mean dimension is one. Roughly speaking, this means that the system $[0,1]^\mathbb{Z}$
has one parameter per every unit second.
In general mean dimension counts the number of parameters ``averaged by a group action''.

The significance of mean dimension in topological dynamics was clarified by the works of 
Lindenstrauss and Weiss \cite{Lindenstrauss--Weiss, Lindenstrauss}.
Here is a sample of their results \cite[Theorem 5.1]{Lindenstrauss}.
Let $X$ be a compact metric space with a continuous $\mathbb{Z}$-action.
If $X$ can be equivariantly embedded into the system $[0,1]^\mathbb{Z}$ then its mean dimension 
is less than or equal to one.
Conversely, if $X$ is minimal (i.e. every orbit is dense) and its mean dimension is less than $1/36$, then it can be equivariantly 
embedded into $[0,1]^{\mathbb{Z}}$.
This striking embedding theorem shows that mean dimension properly measures the size of an 
infinite dimensional dynamical system.

We denote by $\dim(\moduli(\affine P^N):\affine)$ the mean dimension of the $\affine$-action on the space 
of Brody curves $\moduli(\affine P^N)$.
This is a nonnegative real number which 
counts the number of parameters in $\moduli(\affine P^N)$ ``per every unit area of the plane $\affine$''.
\begin{problem}[\textbf{Main problem}] \label{proble: main problem}
Compute the mean dimension $\dim(\moduli(\affine P^N):\affine)$.
\end{problem}
This problem was started by Gromov himself \cite[p. 396 (c)]{Gromov}.
He asked what kind of properties are reflected on
the estimate of the mean dimension.
In our notation, his result in \cite[p. 396 (c)]{Gromov} is the upper bound
\begin{equation} \label{eq: Gromov's upper bound}
   \dim(\moduli(\affine P^N):\affine) \leq 4N.
\end{equation}
The proof is based on the Nevanlinna theory 
(Eremenko \cite[Theorem 2.5]{Eremenko}).
In a series of works \cite{Tsukamoto moduli space of Brody curves, Tsukamoto deformation, Matsuo--Tsukamoto Brody curves},
we have been sharpening Gromov's estimate.
In the present paper we finally reach the answer to the problem.

Let $f:\affine \to \affine P^N$ be a Brody curve. We define its \textbf{energy density} $\rho(f)$ by 
\begin{equation} \label{eq: energy density}
 \rho(f) := \lim_{R\to \infty} \left(\frac{1}{\pi R^2} \sup_{a\in \affine} \int_{|z-a|<R} |df|^2 dxdy\right).
\end{equation}
This limit always exists (see Section \ref{subsection: energy density}).
It evaluates how densely the energy is distributed over the plane.
We define $\rho(\affine P^N)$ as the supremum of $\rho(f)$ over $f\in \moduli(\affine P^N)$.
It is known that (\cite{Tsukamoto packing})
\[ 0<\rho(\affine P^N) < 1, \quad \lim_{N\to \infty}\rho(\affine P^N) =1.\]
The main theorem is the formula expressing the mean dimension by the energy density. 

\begin{theorem}[\textbf{Main theorem}]\label{thm: main theorem}
\[ \dim(\moduli(\affine P^N):\affine) = 2(N+1)\rho(\affine P^N).\]
\end{theorem}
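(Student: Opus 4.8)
The plan is to prove the two inequalities $\dim(\moduli(\affine P^N):\affine)\le 2(N+1)\rho(\affine P^N)$ and $\dim(\moduli(\affine P^N):\affine)\ge 2(N+1)\rho(\affine P^N)$ separately. Both rest on the same bookkeeping principle, visible already for rational curves: a degree-$d$ holomorphic map $\affine P^1\to\affine P^N$ carries energy $d$ and varies in a moduli space of real dimension $2\bigl((N+1)(d+1)-1\bigr)\sim 2(N+1)d$, so one unit of energy is worth $2(N+1)$ real parameters. A Brody curve deposits at most $\rho(f)\le\rho(\affine P^N)$ units of energy per unit area, so it should carry at most $2(N+1)\rho(\affine P^N)$ parameters per unit area; dividing by the area of $\affine$ gives the mean dimension. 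The entire difficulty is to turn this heuristic into matching bounds.

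For the upper bound I would use the metric mean dimension. By the fundamental inequality of Lindenstrauss and Weiss \cite{Lindenstrauss--Weiss}, one has $\dim(\moduli(\affine P^N):\affine)\le\overline{\mathrm{mdim}}_{\mathrm M}(\moduli(\affine P^N),d)$ for \emph{every} metric $d$ compatible with the compact-open topology, so it suffices to produce one good $d$. The obvious choice --- the supremum of the Fubini--Study distance over a unit disk --- is useless, because by the Kolmogorov--Tikhomirov estimate for the $\varepsilon$-entropy of bounded holomorphic functions the $\varepsilon$-covering number of the set of restrictions $\{f|_{[0,L]^2}\}$ already grows like $\exp\bigl(c\,L^2(\log(1/\varepsilon))^2\bigr)$, which makes the metric mean dimension infinite. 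The novelty is to replace it by a suitably distorted, but still compatible, metric $d_*$ for which the high-frequency content of a Brody curve is not seen and for which $\overline{\mathrm{mdim}}_{\mathrm M}(\moduli(\affine P^N),d_*)$ is finite. With $d_*$ fixed, the required bound $\overline{\mathrm{mdim}}_{\mathrm M}(\moduli(\affine P^N),d_*)\le 2(N+1)\rho(\affine P^N)$ reduces to a complex-analytic $\varepsilon$-entropy estimate: the restrictions $f|_{[0,L]^2}$ of Brody curves with $\int_{[0,L]^2}|df|^2\le E$ admit an $\varepsilon$-net, in $d_*$, of cardinality $\exp\bigl((2(N+1)E+o(E))|\log\varepsilon|+o(L^2)\bigr)$. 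Feeding in the fact --- obtained from the properties of the energy density --- that a Brody curve deposits at most $\bigl(\rho(\affine P^N)+o(1)\bigr)L^2$ units of energy on any $L\times L$ square, the exponent becomes $\bigl(2(N+1)\rho(\affine P^N)+o(1)\bigr)L^2|\log\varepsilon|$, i.e.\ exactly $2(N+1)\rho(\affine P^N)$ per unit area. The $\varepsilon$-net itself is built from the First Main Theorem of Nevanlinna theory, which identifies $\int_{|z|<R}|df|^2$ with the growth of the characteristic and hence bounds the number of zeros (and of poles, in an affine chart) of the homogeneous coordinates, together with their Hadamard factorization (genus at most $2$, since a Brody curve has order at most $2$); the whole point is to arrange the counting so that the constant comes out as $2(N+1)$ rather than Gromov and Eremenko's $4N$.

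For the lower bound I would realize a large cube of Brody curves inside the system. Fix $\delta>0$ and choose a Brody curve $g$ with $\rho(g)>\rho(\affine P^N)-\delta$ whose energy is, after a translation-limit adjustment, roughly equidistributed over $\affine$. Deforming $g$ by the holomorphic sections of $g^{*}T\affine P^N$ that preserve the $1$-Lipschitz condition --- the deformation theory of \cite{Tsukamoto deformation, Matsuo--Tsukamoto Brody curves} --- produces a local family whose number of independent parameters per unit area is $2(N+1)\rho(g)$ up to $O(\delta)$, by the same energy-to-parameters accounting. Truncating these deformations on a fine grid of cells so that distant cells no longer interact yields, for every large $L$, a continuous map $[0,1]^{D_L}\to\moduli(\affine P^N)$ whose composition with the restriction to $[0,L]^2$ is an $\varepsilon_0$-embedding for some fixed $\varepsilon_0>0$, with $D_L/L^2\to 2(N+1)\rho(g)$. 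Therefore $\widim_{\varepsilon_0}\bigl(\moduli(\affine P^N)|_{[0,L]^2}\bigr)\ge D_L$, hence $\dim(\moduli(\affine P^N):\affine)\ge 2(N+1)\rho(g)>2(N+1)\bigl(\rho(\affine P^N)-\delta\bigr)$, and letting $\delta\to 0$ completes the lower bound.

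I expect the main obstacle to be the upper bound, and within it the coupled problem of choosing the distorted metric $d_*$ and proving the sharp $\varepsilon$-entropy estimate: one must cut a Brody curve on $[0,L]^2$ into $O(L^2)$ charted pieces, control the overlaps and the loci where the curve meets the hyperplane at infinity, and carry constants through the Hadamard products precisely enough to land on $2(N+1)$ --- the place where the crude Nevanlinna-theoretic argument of Gromov and Eremenko loses the constant --- with $d_*$ designed exactly so that this bookkeeping closes. A secondary difficulty is uniformity in the lower bound: keeping the truncated deformations genuinely $1$-Lipschitz and genuinely $\varepsilon_0$-separated after the cell-by-cell gluing, uniformly as $L\to\infty$.
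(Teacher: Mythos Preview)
Your identification of the two halves is correct, and your sketch of the lower bound is essentially the argument of \cite{Matsuo--Tsukamoto Brody curves}, which the paper simply cites. The upper bound is where the content lies, and here your proposal has a genuine gap.

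You correctly recognize that the route is via metric mean dimension and the Lindenstrauss--Weiss inequality. But two of your concrete claims are wrong, and they matter. First, the standard metric $\dist(f,g)=\sup_{|z|\le 1}d(f(z),g(z))$ does \emph{not} give infinite metric mean dimension; the paper works with exactly this metric and proves $\mmdim(\moduli(\affine P^N),\dist:\affine)=2(N+1)\rho(\affine P^N)$. So the search for a ``distorted metric $d_*$'' is a red herring. Second, and more seriously, your proposed analytic engine for the $\varepsilon$-entropy estimate --- Nevanlinna's First Main Theorem plus Hadamard factorization to count zeros in affine charts --- is precisely the method that yields the previous, non-sharp bound $4N\rho(\affine P^N)$ of (\ref{eq: upper bound by Nevanlinna}). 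The paper is explicit that this sampling/Nevanlinna approach cannot reach the constant $2(N+1)$; there is no known way to ``arrange the counting'' so that Hadamard products land on $2(N+1)$ instead of $4N$.

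The idea you are missing is that the \emph{same} deformation theory that gives the lower bound also gives the upper bound. The paper exploits the local formula for metric mean dimension (Lemma~\ref{lemma: local metric mean dimension}): one only needs to bound $\#\bigl(B_\delta(f,d_{D_R([0,L]^2)}),d_{[0,L]^2},\varepsilon\bigr)$ uniformly in $f$. Near a curve $f$ that is $R$-nondegenerate over $[0,L]^2$, a small neighbourhood is parametrized (up to a controlled $\bar\partial$-error) by sections of $E=f^*T\affine P^N$, which after a cut-and-project to a torus are sent into $H^0(\affine/\Gamma,\mathcal E)$; Riemann--Roch and the K\"ahler--Einstein identity $\mathrm{tr}(\Theta_E)=\pi(N+1)|df|^2$ give $\dim_{\affine}H^0=(N+1)\int_{[0,L]^2}|df|^2+O(L)$, and a ball-counting in this finite-dimensional space yields the separated-set bound (Proposition~\ref{prop: local study around nondegenerate curve}). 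The constant $2(N+1)$ thus comes from the trace of the curvature of $T\affine P^N$, not from zero-counting. Degenerate $f$ are handled by first ``blowing up'' --- gluing in concentrated rational curves at the degenerate spots --- to obtain a nearby $\lambda$-Lipschitz curve that is nondegenerate (Proposition~\ref{prop: resolution of singularity}); the price is a factor $\lambda^2$ in the energy, removed at the end by letting $\lambda\to 1$. Your proposal contains neither the local decomposition of metric mean dimension, nor the nondegeneracy/blow-up dichotomy, nor the Riemann--Roch mechanism that produces the sharp constant.
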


We briefly review preceding researches. 
Gromov's upper bound (\ref{eq: Gromov's upper bound}) is the first result on $\dim(\moduli(\affine P^N):\affine)$.
In \cite{Tsukamoto moduli space of Brody curves} we proved an improved upper bound
\begin{equation}\label{eq: upper bound by Nevanlinna}
  \dim(\moduli(\affine P^N):\affine) \leq 4N\rho(\affine P^N).
\end{equation}
The proof again used the Nevanlinna theory.
The lower bound on the mean dimension was developed 
in \cite{Tsukamoto deformation, Matsuo--Tsukamoto Brody curves}.
The main theorem of \cite{Matsuo--Tsukamoto Brody curves} is the sharp lower bound
\begin{equation} \label{eq: sharp lower bound}
   \dim(\moduli(\affine P^N):\affine) \geq 2(N+1)\rho(\affine P^N).
\end{equation}
Luckily this lower bound coincides with the upper bound (\ref{eq: upper bound by Nevanlinna})
when $N=1$. So we got the formula (\cite[Corollary 1.2]{Matsuo--Tsukamoto Brody curves})
\[ \dim(\moduli(\affine P^1):\affine) = 4\rho(\affine P^1).\]
This was the first nontrivial calculation of the mean dimension in geometric analysis.
When $N\geq 2$, the upper bound (\ref{eq: upper bound by Nevanlinna}) and lower bound 
(\ref{eq: sharp lower bound}) still have a gap.
The purpose of the present paper is to fulfill this gap by proving the sharp upper bound 
\begin{equation} \label{eq: sharp upper bound}
   \dim(\moduli(\affine P^N):\affine) \leq 2(N+1)\rho(\affine P^N).
\end{equation}
Combined with the lower bound (\ref{eq: sharp lower bound}), this establishes Theorem \ref{thm: main theorem}.

The proof of the sharp upper bound (\ref{eq: sharp upper bound}) requires an 
idea completely different from Gromov's upper bound (\ref{eq: Gromov's upper bound}) and our previous bound
(\ref{eq: upper bound by Nevanlinna}).
Before explaining a new idea, we review a previous argument.
Most known upper bounds on mean dimension are based on the techniques of 
\textbf{sampling and embedding} (Gromov \cite[Chapters 3 and 4]{Gromov}).
The proof of (\ref{eq: upper bound by Nevanlinna}) is a typical one, and it goes as follows.
Take a lattice $\Gamma\subset \affine$ satisfying 
$\area(\affine/\Gamma) < 1/(2\rho(\affine P^N))$.
By using Nevanlinna's first main theorem, we can prove that the \textit{sampling map} 
\[ \moduli(\affine P^N)\to (\affine P^N)^{\Gamma}, \quad f\mapsto f|_{\Gamma} = (f(\gamma))_{\gamma\in \Gamma},\]
is an embedding.
Then we get the upper bound on the mean dimension with respect to the $\Gamma$-action
\[ \dim(\moduli(\affine P^N):\Gamma) \leq \dim((\affine P^N)^{\Gamma}:\Gamma) = 2N.\]
The mean dimension of the $\affine$-action follows from this by 
\[ \dim(\moduli(\affine P^N):\affine) = \dim(\moduli(\affine P^N):\Gamma)/\area(\affine/\Gamma) \leq 2N/\area(\affine/\Gamma).\]
Letting $\area(\affine/\Gamma)\to 1/(2\rho(\affine P^N))$, we get (\ref{eq: upper bound by Nevanlinna}).
This proof is very simple but, unfortunately, non-flexible.
It is hopeless to prove the optimal bound (\ref{eq: sharp upper bound}) by this method,
and there also exist several problems facing similar difficulties.
For example, Gromov \cite[Chapter 4]{Gromov} studied a dynamical system consisting of complex subvarieties 
in $\affine^N$, and he proved an upper bound on the mean dimension by using the sampling argument.
But his estimate is very far from the conjectural value
\cite[p. 408 Corollary, p. 409 Remark, p. 409 Remarks and open questions (a)]{Gromov}.
Sampling method seems to be inadequate for obtaining a precise estimate.

In order to overcome these difficult situations, 
we started to develop a completely new approach in \cite{Tsukamoto Yang--Mills dynamics}.
The paper \cite{Tsukamoto Yang--Mills dynamics} examined a new technique
in the context of Yang--Mills gauge theory.
Based on this experience, now we attack to Brody curves.
A key novel ingredient of our approach is \textbf{metric mean dimension}
introduced by Lindenstrauss--Weiss \cite{Lindenstrauss--Weiss}.
This is a geometric/information theoretic version of mean dimension.
Its idea is as follows.
Given a dynamical system, suppose we try to store on computer the orbits of the system
over a very long period of time to an accuracy of $\varepsilon>0$ .
How many memory (bits) do we need?
Asymptotically (as the period of time goes to infinity and $\varepsilon$ goes to zero) the answer is given by metric mean dimension.
The precise definition is given in Subsection \ref{subsection: mean dimension}.

A fundamental theorem of Lindenstrauss--Weiss asserts that metric mean dimension is always an upper bound 
on mean dimension.
So we can approach to mean dimension via metric mean dimension.
This provides us great flexibility because 
\textit{metric mean dimension is a more local quantity than mean dimension}.
Intuitively speaking, when we store the information of a dynamical system on computer,
we can decompose the system into small pieces and try to memorize each piece separately.
Namely we can decompose a global problem into local ones which are more suitable for detailed analysis.
This enables us to apply the analytic machinery developed in \cite{Tsukamoto deformation, Matsuo--Tsukamoto Brody curves}
to the problem of proving the upper bound (\ref{eq: sharp upper bound}).
The techniques in \cite{Tsukamoto deformation, Matsuo--Tsukamoto Brody curves} were originally introduced for 
the lower bound (\ref{eq: sharp lower bound}).
Its (unexpected) usefulness for the upper bound seems to suggest that our method is a right way to the problem.
It is very likely that our approach can be also applied to other problems, and
we hope that it will grow to be a standard technique in mean dimension theory.

\section{Preliminaries}

\subsection{Mean dimension} \label{subsection: mean dimension}

We review basic definitions of mean dimension here.
The main references are Gromov \cite{Gromov} and Lindenstrauss--Weiss \cite{Lindenstrauss--Weiss}.
Readers can find further information in 
Lindenstrauss \cite{Lindenstrauss} and Gutman \cite{Gutman}.

First we need to introduce some metric invariants.
Let $(X,d)$ be a compact metric space.
Let $Y$ be a topological space, and $f:X\to Y$ a continuous map.
For a positive number $\varepsilon$, we call $f$ an \textbf{$\varepsilon$-embedding} if $\diam (f^{-1}(y))<\varepsilon$ 
for all $y\in Y$. 
This means that $f$ looks like an embedding if we ignore an error smaller than $\varepsilon$.
We define the \textbf{$\varepsilon$-width dimension} 
$\widim_\varepsilon(X,d)$ as the minimum integer $n\geq 0$ such that there exist an 
$n$-dimensional finite simplicial complex $P$ and an $\varepsilon$-embedding from $(X,d)$ to $P$.
The topological dimension is given by 
\[ \dim X = \lim_{\varepsilon\to 0} \widim_\varepsilon (X,d).\]

For $\varepsilon>0$ we set 
\begin{equation*}
   \begin{split}
   &\#(X,d,\varepsilon) := \min\{\, |\alpha|\, |\, \text{$\alpha$ is an open covering of $X$ with 
   $\diam U<\varepsilon$ for all $U\in \alpha$}\}, \\
   &\#_{\mathrm{sep}}(X,d,\varepsilon) := \max\{n\geq 1|\, \exists x_1,\dots,x_n\in X \text{ with }
   \dist(x_i,x_j)>\varepsilon \> (i\neq j)\}.
   \end{split}
\end{equation*} 
Here ``sep'' is the abbreviation for \textbf{separated set}.
These two quantities are essentially equivalent to each other:
For $0<\delta<\varepsilon/2$ 
\begin{equation} \label{eq: separated set and spanning set}
  \#_{\mathrm{sep}}(X,d,\varepsilon) \leq \#(X,d,\varepsilon) 
   \leq \#_{\mathrm{sep}}(X,d,\delta).
\end{equation}
The next lemma will be used later. Its proof is trivial
\begin{lemma} \label{lemma: separated set}
Let $(X,d)$ and $(Y,d')$ be compact metric spaces.
Let $\varepsilon, \delta>0$.
Suppose there exists a map (not necessarily continuous) $f:X\to Y$ satisfying 
\[ d(x,y)>\varepsilon \Rightarrow d'(f(x),f(y)) > \delta .\]
Then $\#_{\mathrm{sep}}(X,d,\varepsilon)\leq \#_{\mathrm{sep}}(Y,d',\delta)$.
\end{lemma}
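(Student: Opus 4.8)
The plan is to transport a maximal separated set from $X$ to $Y$ along $f$. First I would set $n := \#_{\mathrm{sep}}(X,d,\varepsilon)$ and, by the definition of this quantity, choose points $x_1,\dots,x_n\in X$ with $d(x_i,x_j)>\varepsilon$ for all $i\neq j$. Applying the hypothesis to each such pair yields
\[ d'(f(x_i),f(x_j))>\delta \qquad (i\neq j).\]
In particular the images $f(x_1),\dots,f(x_n)$ are pairwise distinct, so $\{f(x_1),\dots,f(x_n)\}$ is a $\delta$-separated subset of $Y$ of cardinality $n$. By the definition of $\#_{\mathrm{sep}}(Y,d',\delta)$ as the maximal size of such a set, we conclude $\#_{\mathrm{sep}}(Y,d',\delta)\geq n=\#_{\mathrm{sep}}(X,d,\varepsilon)$, which is the assertion.

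The only point that even needs a word is that one does not need $f$ to be injective as a hypothesis: the stated implication already guarantees that well-separated points of $X$ cannot be identified in $Y$, so the count is not lost under $f$. Accordingly there is no genuine obstacle in the argument; the lemma is essentially a restatement of the definition of the separated-set cardinality for a map that does not contract far-apart pairs below the threshold $\delta$, and the proof is a two-line verification as indicated.
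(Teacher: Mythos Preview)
Your proof is correct and is exactly the straightforward verification the paper has in mind; the paper itself simply states that the proof is trivial and gives no further argument.
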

We will also need the following.
\begin{example} \label{example: separated set of Banach ball}
Let $(V,\norm{\cdot})$ be a real $n$-dimensional Banach space.
Let $B_r(V)$ be the closed $r$-ball of $V$ around the origin. For any $\varepsilon>0$
\[ \#_{\mathrm{sep}}(B_r(V),\norm{\cdot},\varepsilon) \leq \left(\frac{\varepsilon+2r}{\varepsilon}\right)^n.\]
\end{example}
\begin{proof}
Let $\mu$ be the Lebesgue measure (i.e. the Haar measure with respect to the translation) 
on $V$ normalized by $\mu(B_1(V))=1$.
For any $r>0$ we have 
$\mu(B_r(V)) = r^n$.
Choose $\{x_1,\dots,x_N\}\subset B_r(V)$ with $\norm{x_i-x_j}>\varepsilon$ for $i\neq j$.
Let $B_i$ be the closed $\varepsilon/2$-ball around $x_i$.
These $B_i$ are disjoint and contained in $B_{r+\varepsilon/2}(V)$.
Hence 
\[  N(\varepsilon/2)^n = \mu\left(\bigcup_{i=1}^N B_i\right) \leq \mu(B_{r+\varepsilon/2}(V)) = (r+\varepsilon/2)^n.\]
\end{proof}

Consider the complex plane $\affine$. 
For $a\in \affine$ and $r\geq 0$ we define $D_r(a)$ as the closed $r$-ball around $a$ in the plane.
We abbreviate $D_r(0)$ as $D_r$.
For $\Omega\subset \affine$ and $r>0$ we define $\partial_r\Omega$ as the set of $a\in \affine$
such that $D_r(a)$ non-trivially intersects with both $\Omega$ and $\affine \setminus \Omega$.
A sequence $\{\Omega_n\}_{n\geq 1}$ of bounded Borel subsets of $\affine$ is called a \textbf{F{\o}lner sequence} if for any $r>0$
\[ \lim_{n\to \infty}\frac{\area(\partial_r\Omega_n)}{\area(\Omega_n)} = 0 \quad 
   (\text{$\area(\cdot)$ is the standard Lebesgue measure}).\]
For example $\Omega_n := D_n$ is a F{\o}lner sequence. $\Omega_n := [0,n]^2$ is also.
The next lemma (which holds in a more general context of amenable groups) 
is a basis of the definition of mean dimension.
This was originally found by Ornstein--Weiss \cite[Chapter I, Sections 2 and 3]{Ornstein--Weiss}.
This formulation is due to Gromov \cite[p. 336]{Gromov}.
\begin{lemma}[Ornstein--Weiss lemma] \label{lemma: Ornstein--Weiss}
Let $h: \{\text{bounded Borel subsets of }\,\affine\}\to \mathbb{R}$ be a non-negative function satisfying 
the following three conditions.

\noindent 
(1) If $\Omega_1\subset \Omega_2$, then $h(\Omega_1)\leq h(\Omega_2)$.

\noindent 
(2) $h(\Omega_1\cup \Omega_2) \leq h(\Omega_1)+h(\Omega_2)$.

\noindent 
(3) For any $a\in \affine$ and any bounded Borel subset $\Omega \subset \affine$, we have 
$h(a+\Omega)=h(\Omega)$ where $a+\Omega:=\{a+z\in \affine| z\in \Omega\}$.

Then for any F{\o}lner sequence $\Omega_n$ $(n\geq 1)$ in $\affine$, the limit of the sequence 
\[ \frac{h(\Omega_n)}{\area(\Omega_n)} \quad (n\geq 1) \]
exists, and its value is independent of the choice of a F{\o}lner sequence.
\end{lemma}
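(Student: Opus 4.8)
\noindent\emph{Proof proposal.} The plan is to identify a single candidate for the limit from the behaviour of $h$ on squares and then to show that every F{\o}lner sequence reproduces it; the only genuinely nontrivial input will be the Ornstein--Weiss quasi-tiling lemma. I would first record a crude bound. Put $C:=h([0,1)^2)$, and for a bounded Borel set $\Omega$ let $N(\Omega)$ be the number of unit lattice squares $[i,i+1)\times[j,j+1)$ ($i,j\in\mathbb{Z}$) meeting $\Omega$. Covering $\Omega$ by those squares and using (1), (2), (3) gives $h(\Omega)\le C\,N(\Omega)$; in particular, writing $Q_L$ for a square of side $L$, $h(Q_L)\le C(L+1)^2$, so
\[ \beta:=\inf_{L\ge 1}\frac{h(Q_L)}{L^2}\in[0,\infty).\]
This $\beta$ is the value I claim all the limits equal.

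Next I would prove $\lim_{L\to\infty}h(Q_L)/L^2=\beta$ by a two-dimensional Fekete argument. Given $\varepsilon>0$, fix $\ell\ge 1$ with $h(Q_\ell)/\ell^2<\beta+\varepsilon$; for $L\ge\ell$ and $k=\lfloor L/\ell\rfloor$, the square $Q_L$ is covered by $k^2$ translates of $Q_\ell$ (tiling a square of side $k\ell$) together with two boundary strips of width $<\ell$, each inside a translate of an $L\times\ell$ rectangle. Using (2), (3) and the crude bound on the strips,
\[ h(Q_L)\le k^2h(Q_\ell)+8C\ell L\le L^2(\beta+\varepsilon)+8C\ell L,\]
so $\limsup_L h(Q_L)/L^2\le\beta+\varepsilon\le\beta$, and the reverse inequality is automatic as $\beta$ is an infimum.

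For the upper bound along an arbitrary F{\o}lner sequence $\{\Omega_n\}$, fix $\ell\ge 1$ and cover $\Omega_n$ by the $\ell$-grid squares meeting it. Any such square not contained in $\Omega_n$ also meets $\affine\setminus\Omega_n$, and, having diameter $\sqrt{2}\,\ell$, lies inside $\partial_{\sqrt{2}\,\ell}\Omega_n$; hence the number of squares is at most $\bigl(\area(\Omega_n)+\area(\partial_{\sqrt{2}\,\ell}\Omega_n)\bigr)/\ell^2$, and by (1), (2),
\[ \frac{h(\Omega_n)}{\area(\Omega_n)}\le\frac{h(Q_\ell)}{\ell^2}\left(1+\frac{\area(\partial_{\sqrt{2}\,\ell}\Omega_n)}{\area(\Omega_n)}\right).\]
By the F{\o}lner property the bracket tends to $1$, so $\limsup_n h(\Omega_n)/\area(\Omega_n)\le h(Q_\ell)/\ell^2$ for each $\ell$, and letting $\ell\to\infty$ gives $\limsup_n h(\Omega_n)/\area(\Omega_n)\le\beta$.

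The remaining, and hardest, point is the matching lower bound $\liminf_n h(\Omega_n)/\area(\Omega_n)\ge\beta$ for every F{\o}lner sequence: subadditivity is useless here since it only bounds $h$ from above, and this is exactly where the Ornstein--Weiss quasi-tiling lemma must be used. Assume the liminf equals $\gamma<\beta$ and fix $\varepsilon>0$ with $\gamma+\varepsilon<\beta$. Along a subsequence, the F{\o}lner sets $F_j:=\Omega_{n_j}$ satisfy $h(F_j)/\area(F_j)\to\gamma$; this subsequence is still F{\o}lner, so one may choose finitely many of them $F_{j_1},\dots,F_{j_k}$ with $h(F_{j_i})/\area(F_{j_i})<\gamma+\varepsilon$ for all $i$ and with $F_{j_1}$ extremely invariant, $F_{j_2}$ much larger and much more invariant, and so on, so that (Ornstein--Weiss) these $k$ sets $\eta$-quasi-tile every sufficiently invariant region. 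Applying this to a large square $Q_L$ produces disjoint translates of the $F_{j_i}$ inside $Q_L$ covering area $\ge(1-\eta)L^2$; covering the leftover by unit lattice squares (their number is $O(\eta L^2)+o(L^2)$ as $L\to\infty$, using that the $F_{j_i}$ were taken invariant enough) and summing via (1), (2), (3) and the crude bound gives
\[ h(Q_L)\le(\gamma+\varepsilon)L^2+O(\eta)L^2+o(L^2)\qquad(L\to\infty),\]
hence $\beta=\lim_L h(Q_L)/L^2\le\gamma+\varepsilon+O(\eta)$; letting $\eta\to 0$ contradicts $\gamma+\varepsilon<\beta$. Thus $\liminf_n h(\Omega_n)/\area(\Omega_n)\ge\beta$, and with the previous step the limit exists, equals $\beta$, and is independent of the F{\o}lner sequence. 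I expect the quasi-tiling lemma to be the real obstacle to a self-contained account: it rests on the Ornstein--Weiss construction of a nested family of highly invariant tiles together with a greedy packing (largest tiles first) showing such tiles cover a $(1-\eta)$-fraction of any sufficiently invariant set.
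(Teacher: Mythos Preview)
The paper does not prove this lemma: it is stated with attribution to Ornstein--Weiss \cite[Chapter I, Sections 2 and 3]{Ornstein--Weiss} and Gromov \cite[p.~336]{Gromov}, and no argument is given. So there is no ``paper's own proof'' to compare against; what you have written is a self-contained sketch of the standard Ornstein--Weiss argument that the paper simply cites.

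Your outline is essentially correct and follows the classical route: identify the limit $\beta$ via Fekete on squares, get the upper bound along any F{\o}lner sequence by covering with a fixed square grid, and get the lower bound by quasi-tiling a large square with finitely many very invariant F{\o}lner sets along the offending subsequence. The one place I would tighten is the sentence ``covering the leftover by unit lattice squares (their number is $O(\eta L^2)+o(L^2)$\dots)''. The leftover $R\subset Q_L$ has area $\le\eta L^2$, but area alone does not bound the number of unit grid squares meeting $R$. The correct accounting is: a unit grid square either lies entirely in $R$ (at most $\eta L^2$ of these, by area), or it meets both $R$ and some translate $a+F_{j_i}$, hence lies in $a+\partial_{\sqrt{2}}F_{j_i}$; since the tiles are chosen $(\sqrt{2},\eta)$-invariant and are disjoint inside $Q_L$, the latter contribute total area at most $\eta\sum\area(F_{j_i})\le\eta L^2$, plus $O(L)$ squares along $\partial Q_L$. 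Making this explicit closes the only soft spot in your sketch. Otherwise your proposal is a faithful rendition of the argument the paper defers to the literature.
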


Choose a F{\o}lner sequence $\{\Omega_n\}_{n\geq 1}$ in $\affine$.
Let $(X,d)$ be a compact metric space, and
suppose the group $\affine$ continuously acts on $X$.
For a subset $\Omega$ of $\affine$ we define a new distance $d_\Omega$ on $X$ by 
\[ d_\Omega (x,y) = \sup_{a\in \Omega} d(a.x,a.y) \quad (x,y\in X).\]
For any $\varepsilon>0$ the function $h(\Omega) := \widim_\varepsilon (X,d_\Omega)$ satisfies 
the three conditions in the Ornstein--Weiss lemma (Lemma \ref{lemma: Ornstein--Weiss}).
Then we define the \textbf{mean dimension} $\dim(X:\mathbb{C})$ by 
\[ \dim(X:\affine) := \lim_{\varepsilon\to 0} \left(\lim_{n\to \infty} 
   \frac{\widim_\varepsilon(X,d_{\Omega_n})}{\area(\Omega_n)}\right).\]
The value of $\dim(X:\affine)$ is independent of the choice of a distance $d$ on $X$ compatible with the topology.
So the mean dimension is a topological invariant.

Next we introduce \textbf{metric mean dimension} (Lindenstrauss--Weiss \cite[Section 4]{Lindenstrauss--Weiss}).
For any $\varepsilon >0$ the function $h(\Omega) := \log \#(X,d_{\Omega},\varepsilon)$ also satisfies the conditions of 
the Ornstein--Weiss lemma. Then we define the ``entropy at the scale $\varepsilon$'' by 
\[ S(X,d,\varepsilon) := \lim_{n\to \infty} \frac{\log \#(X,d_{\Omega_n},\varepsilon)}{\area(\Omega_n)}.\]
The topological entropy $h_{\mathrm{top}}(X:\affine)$ is the limit of $S(X,d,\varepsilon)$ as $\varepsilon\to 0$.
We define the metric mean dimension by 
\[ \mmdim(X,d:\mathbb{C}) := \liminf_{\varepsilon \to 0}\frac{S(X,d,\varepsilon)}{|\log \varepsilon|}.\]
This \textit{depends} on the choice of a distance.
The next theorem is fundamental (\cite[Theorem 4.2]{Lindenstrauss--Weiss}).
\begin{theorem}[Lindenstrauss--Weiss theorem]\label{thm: Lindenstrauss--Weiss}
Metric mean dimension is an upper bound on mean dimension:
\[ \dim(X:\affine) \leq \mmdim(X,d:\affine).\]
\end{theorem}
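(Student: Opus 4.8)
The plan is to deduce the inequality from a single, purely metric–geometric estimate — a per–scale comparison between the $\varepsilon$-width dimension and the $\varepsilon$-covering number of an \emph{arbitrary} compact metric space — and then to feed that estimate into the Ornstein--Weiss formalism already set up above. Everything dynamical (the Følner limits, the meaning of $\dim(X:\affine)$, $\mmdim(X,d:\affine)$ and $S(X,d,\varepsilon)$) is then bookkeeping; the real content is the metric lemma.

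The key metric lemma I would isolate is the following: there is a universal constant $c\in(0,1]$ such that for every compact metric space $(Y,\rho)$ of diameter at most $D$ and every $\varepsilon>0$ small enough in terms of $D$,
\[ \widim_\varepsilon(Y,\rho)\ \le\ \frac{\log\#(Y,\rho,c\varepsilon)}{\log(1/\varepsilon)}. \]
I would prove this along Pontryagin--Schnirelmann lines, i.e. as a quantitative per-scale version of ``covering dimension is dominated by box dimension'': start from an open cover of $Y$ of mesh $<c\varepsilon$ with exactly $\#(Y,\rho,c\varepsilon)$ members, and massage it — grouping members, and playing the two scales $c\varepsilon$ and $\varepsilon$ against each other — into an open cover of mesh $<\varepsilon$ whose \emph{order} is at most the right-hand side above; then the canonical map of $Y$ into the nerve of this refined cover is an $\varepsilon$-embedding into a simplicial complex of that dimension. (Equivalently, one embeds $Y$ in a high-dimensional cube via the distances to a fine net and projects to a low skeleton, the permissible drop in dimension being governed by the covering number.) The hard part will be precisely this step: the trivial nerve bound only gives $\widim_\varepsilon(Y,\rho)\le\#(Y,\rho,\varepsilon)-1$, so one must genuinely gain the factor $1/\log(1/\varepsilon)$ — i.e. convert a covering number into a true dimension — and, moreover, one must keep the constant in front of $\log\#$ equal to $1$ (or harmlessly $1+o_\varepsilon(1)$), since a cruder bound $\widim_\varepsilon\le\mathrm{const}\cdot\log\#(\,\cdot\,,c\varepsilon)/\log(1/\varepsilon)$ would only yield $\dim\le\mathrm{const}\cdot\mmdim$, which is not the theorem. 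The subtlety is that in a general metric space the order of a cover at a single scale can be arbitrarily large, so one is forced to exploit that large order at scale $\varepsilon$ is itself expensive in terms of the covering number at the finer scale $c\varepsilon$.

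Granting the lemma, I would finish as follows. Fix a Følner sequence $\{\Omega_n\}$ and apply the lemma with $(Y,\rho)=(X,d_{\Omega_n})$; since $\mathrm{diam}(X,d_{\Omega_n})\le\mathrm{diam}(X,d)$ for every $n$, the smallness requirement on $\varepsilon$ is uniform in $n$, so $\widim_\varepsilon(X,d_{\Omega_n})\le\log\#(X,d_{\Omega_n},c\varepsilon)/\log(1/\varepsilon)$. Dividing by $\area(\Omega_n)$ and letting $n\to\infty$ — legitimate because, as recalled in the excerpt, both $\Omega\mapsto\widim_\varepsilon(X,d_\Omega)$ and $\Omega\mapsto\log\#(X,d_\Omega,c\varepsilon)$ satisfy the Ornstein--Weiss hypotheses, so both limits exist — gives
\[ \lim_{n\to\infty}\frac{\widim_\varepsilon(X,d_{\Omega_n})}{\area(\Omega_n)}\ \le\ \frac{S(X,d,c\varepsilon)}{\log(1/\varepsilon)}. \]
Now let $\varepsilon\to0$. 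By definition the left-hand side converges to $\dim(X:\affine)$, while on the right I write
\[ \frac{S(X,d,c\varepsilon)}{\log(1/\varepsilon)}=\frac{S(X,d,c\varepsilon)}{|\log(c\varepsilon)|}\cdot\frac{|\log(c\varepsilon)|}{\log(1/\varepsilon)}, \]
whose second factor tends to $1$; taking $\liminf_{\varepsilon\to0}$ and reparametrizing $\varepsilon'=c\varepsilon$ turns the right-hand side into $\liminf_{\varepsilon'\to0}S(X,d,\varepsilon')/|\log\varepsilon'|=\mmdim(X,d:\affine)$. Since the left-hand limit exists and is bounded by the right-hand quantity for every small $\varepsilon$, we conclude $\dim(X:\affine)\le\mmdim(X,d:\affine)$.

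In summary, the proof has a soft part (Steps inserting $d_{\Omega_n}$, invoking Ornstein--Weiss, and reconciling the logarithmic normalizations so that exactly $\mmdim$ appears in the limit) and a hard part (the metric lemma), and essentially all the difficulty is concentrated in producing, at a single scale, an open cover of mesh $<\varepsilon$ whose order is controlled by $\log\#(\,\cdot\,,c\varepsilon)/\log(1/\varepsilon)$ with the sharp constant $1$.
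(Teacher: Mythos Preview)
The paper does not prove this theorem; it simply cites \cite[Theorem 4.2]{Lindenstrauss--Weiss}. Moreover, in the Remark immediately following the statement, the paper points out that the Lindenstrauss--Weiss proof is \emph{probabilistic}: one constructs the required $\varepsilon$-embedding by a random colouring argument, and the group action is used in an essential way to make that randomization succeed. So there is no ``paper's own proof'' to compare against beyond this citation and remark.

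Your proposal takes a genuinely different route --- a purely metric, non-dynamical one --- by positing a per-scale inequality
\[
  \widim_\varepsilon(Y,\rho)\ \le\ \frac{\log\#(Y,\rho,c\varepsilon)}{\log(1/\varepsilon)}
\]
valid for \emph{every} compact metric space $(Y,\rho)$, and then specializing to $(Y,\rho)=(X,d_{\Omega_n})$. The ``soft'' part of your argument (feeding such a lemma into the Ornstein--Weiss machinery and matching the logarithmic normalizations) is fine. The gap is the lemma itself: you have not proved it, and the hints you give (``massage'' a fine cover, ``group members'', ``play two scales against each other'') do not constitute an argument. The only bound one gets for free from a cover by $N$ sets of mesh $<c\varepsilon$ is $\widim_\varepsilon\le N-1$; bringing the order of a cover down to $\log N/\log(1/\varepsilon)$ with the sharp leading constant~$1$, for a \emph{general} compact metric space and at a \emph{single} scale, is exactly the content that is missing. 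In the Lindenstrauss--Weiss argument this reduction of order is achieved not metrically but dynamically: one applies the covers $\alpha^{\Omega_n}$ obtained by intersecting translates, and a random colouring shows that the order drops by the required logarithmic factor with high probability. That is precisely the step the paper's Remark flags as non-constructive. If a static, deterministic per-scale inequality of your form were available, it would supersede the probabilistic proof in the literature; absent a proof of it, your proposal reduces the theorem to a statement at least as hard as the theorem itself.
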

Therefore we can approach to an upper bound on mean dimension via metric mean dimension.
A difficulty of mean dimension lies in its \textit{global} nature.
We need to construct an $\varepsilon$-embedding from $X$ to a simplicial complex for an upper bound on 
$\dim(X:\affine)$.
But in many situations (in particular the case of $X=\moduli(\affine P^N)$) the space $X$ is a mysterious infinite dimensional 
space, and it is highly nontrivial to construct an $\varepsilon$-embedding 
from $X$ to an appropriate simplicial complex.
We can relax this difficulty by using metric mean dimension because metric mean dimension 
is a more \textit{local} quantity.
Its local nature is probably not obvious in the above definition.
The next lemma provides an explicit formulation.
Here we use the notation $D_R(\Lambda) := \bigcup_{a\in \Lambda} D_R(a)$ for $R\geq 0$ and $\Lambda\subset \affine$.

\begin{lemma} \label{lemma: local metric mean dimension}
For any $\delta>0$ and $R\geq 0$ the metric mean dimension $\mmdim(X,d:\affine)$ is equal to 
\begin{equation} \label{eq: local formula of metric mean dimension}
 \liminf_{\varepsilon\to 0}
 \left\{ 
 \left(\limsup_{L\to \infty} \frac{\sup_{x\in X} 
  \log \#(B_\delta(x,d_{D_R([0,L]^2)}), d_{[0,L]^2},\varepsilon)}{L^2}\right)/|\log\varepsilon|
 \right\}.
\end{equation}
Here $B_\delta(x,d_{D_R([0,L]^2)})$ is the closed $\delta$-ball 
around $x$ with respect to the distance $d_{D_R([0,L]^2)}$.
Indeed we can replace $\limsup_{L\to \infty}$ with $\lim_{L\to \infty}$.
But we don't need this fact.
\end{lemma}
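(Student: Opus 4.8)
The plan is to establish both inequalities between $\mmdim(X,d:\affine)$ and the right-hand side of \eqref{eq: local formula of metric mean dimension}; for brevity write $M(L,\varepsilon):=\sup_{x\in X}\#\bigl(B_\delta(x,d_{D_R([0,L]^2)}),d_{[0,L]^2},\varepsilon\bigr)$, so that this right-hand side is $\liminf_{\varepsilon\to 0}\bigl(\limsup_{L\to\infty}L^{-2}\log M(L,\varepsilon)\bigr)/|\log\varepsilon|$. First I would dispose of the easy inequality ``(right-hand side)$\,\le\mmdim(X,d:\affine)$'': for every $x\in X$ and every $L$ the ball $B_\delta(x,d_{D_R([0,L]^2)})$ is a subset of $X$, so restricting any open covering of $X$ by sets of $d_{[0,L]^2}$-diameter $<\varepsilon$ to this ball gives $M(L,\varepsilon)\le\#(X,d_{[0,L]^2},\varepsilon)$. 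Dividing by $L^2$ and letting $L\to\infty$ — here $\{[0,L]^2\}$ is a F{\o}lner exhaustion, so the limit equals $S(X,d,\varepsilon)$ by the Ornstein--Weiss lemma (Lemma \ref{lemma: Ornstein--Weiss}), the passage from integer to real $L$ being handled by monotonicity of $\#$ in the domain — then dividing by $|\log\varepsilon|$ and taking $\liminf_{\varepsilon\to 0}$ yields this inequality.

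The substance is the reverse inequality $\mmdim(X,d:\affine)\le(\text{right-hand side of }\eqref{eq: local formula of metric mean dimension})$, and the key is a sub-multiplicative covering estimate. Fix $\varepsilon,\delta>0$ and $L$. I would first cover $X$ by $N:=\#(X,d_{D_R([0,L]^2)},\delta)$ open sets $U_1,\dots,U_N$, each of $d_{D_R([0,L]^2)}$-diameter $<\delta$; then each $U_i$ lies inside some ball $B_\delta(x_i,d_{D_R([0,L]^2)})$, which in turn is covered by at most $M(L,\varepsilon)$ sets of $d_{[0,L]^2}$-diameter $<\varepsilon$. Taking the union of these coverings over $i$ gives $\#(X,d_{[0,L]^2},\varepsilon)\le N\cdot M(L,\varepsilon)=\#(X,d_{D_R([0,L]^2)},\delta)\cdot M(L,\varepsilon)$. (To keep every covering set open in $X$ one routes this through the separated/spanning comparison \eqref{eq: separated set and spanning set}, at the cost only of replacing $\varepsilon$ by a fixed multiple $c\varepsilon$; since $|\log(c\varepsilon)|/|\log\varepsilon|\to 1$ this is harmless and I suppress it.) Now take logarithms, divide by $L^2$, and let $L\to\infty$: the left side tends to $S(X,d,\varepsilon)$, while $L^{-2}\log\#(X,d_{D_R([0,L]^2)},\delta)\to S(X,d,\delta)$ because $D_R([0,L]^2)\subset[-R,L+R]^2$ and $\{[-R,L+R]^2\}$ is again a F{\o}lner exhaustion whose area is $L^2+O(L)$, so Lemma \ref{lemma: Ornstein--Weiss} applies. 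This yields $S(X,d,\varepsilon)\le S(X,d,\delta)+\limsup_{L\to\infty}L^{-2}\log M(L,\varepsilon)$.

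To finish, I would divide this last inequality by $|\log\varepsilon|$ and take $\liminf_{\varepsilon\to 0}$: since $\delta$ is fixed and $S(X,d,\delta)<\infty$, the term $S(X,d,\delta)/|\log\varepsilon|$ tends to $0$, so $\mmdim(X,d:\affine)\le\liminf_{\varepsilon\to 0}\bigl(\limsup_{L\to\infty}L^{-2}\log M(L,\varepsilon)\bigr)/|\log\varepsilon|$, and combined with the first paragraph this proves the asserted equality. The hard part is precisely the sub-multiplicative estimate of the second paragraph together with the bookkeeping that makes the ``boundary error'' $S(X,d,\delta)/|\log\varepsilon|$ disappear in the limit: this is exactly the step where the global entropy $S(X,d,\varepsilon)$ is reduced to the local ball-covering numbers $M(L,\varepsilon)$, and it works only because enlarging the square $[0,L]^2$ to $D_R([0,L]^2)$ changes the area by a lower-order amount $O(L)$, so the $\delta$-scale covering of $X$ contributes nothing after dividing by $|\log\varepsilon|$.
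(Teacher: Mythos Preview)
Your proof is correct and follows essentially the same approach as the paper: both establish the easy inequality by the trivial inclusion $B_\delta(x,d_{D_R([0,L]^2)})\subset X$, and the substantive inequality by the sub-multiplicative covering estimate $\#(X,d_{[0,L]^2},\varepsilon)\le \#(X,d_{D_R([0,L]^2)},\delta)\cdot M(L,\varepsilon)$, then divide by $L^2$, let $L\to\infty$, divide by $|\log\varepsilon|$, and let $\varepsilon\to 0$ so that the $S(X,d,\delta)$ term disappears. Your added remarks on the F{\o}lner comparison for $D_R([0,L]^2)$ and on routing through \eqref{eq: separated set and spanning set} are correct refinements of points the paper leaves implicit.
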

\begin{proof}
We denote the right-hand side by $\mmdim'(X,d:\affine)$.
It is obvious that $\mmdim(X,d:\affine)\geq \mmdim'(X,d:\affine)$.
We can choose $x_1,\dots,x_n\in X$ such that 
\[ X= \bigcup_{i=1}^n B_\delta(x_i, d_{D_R([0,L]^2)}), \quad n\leq \#(X,d_{D_R([0,L]^2)},\delta).\]
Then $\#(X,d_{[0,L]^2},\varepsilon)$ is bounded by 
\[ \sum_{i=1}^n\#(B_\delta(x_i,d_{D_R([0,L]^2)}),d_{[0,L]^2},\varepsilon) 
   \leq n \sup_{x\in X} \#(B_\delta(x,d_{D_R([0,L]^2)}),d_{[0,L]^2},\varepsilon).\]
Therefore 
\[ \frac{\log\#(X,d_{[0,L]^2},\varepsilon)}{L^2}\leq \frac{\log\#(X,d_{D_R([0,L]^2)},\delta)}{L^2} 
   + \frac{\sup_{x\in X}\log\#(B_\delta(x,d_{D_R([0,L]^2)}),d_{[0,L]^2},\varepsilon)}{L^2}.\]
Letting $L\to \infty$, we get 
\[ S(X,d,\varepsilon) \leq S(X,d,\delta)  
   + \limsup_{L\to \infty} \frac{\sup_{x\in X} \log \#(B_\delta(x,d_{D_R([0,L]^2)}), d_{[0,L]^2},\varepsilon)}{L^2}.\]
Divide this by $|\log\varepsilon|$, and let $\varepsilon \to 0$. Then we get $\mmdim(X,d:\affine) \leq \mmdim'(X,d:\affine)$.
\end{proof}

The formula (\ref{eq: local formula of metric mean dimension}) probably looks complicated.
The point is that we only need to estimate $\#(B_\delta(x,d_{D_R([0,L]^2)}), d_{[0,L]^2},\varepsilon)$ for the calculation of the 
metric mean dimension $\mmdim(X,d:\affine)$.
This is a much more local problem than constructing an $\varepsilon$-embedding.
Probably we should also emphasize that metric mean dimension is \textit{not} a completely 
local quantity.
The term $\sup_{x\in X}$ in the formula 
(\ref{eq: local formula of metric mean dimension})
has a global nature; we need a uniform estimate all over $X$.
This requires us a detailed quantitative study of $X$, and it will be the main 
technical issue of the paper.

\begin{remark}
Some readers might think that the proof of the Lindenstrauss--Weiss theorem 
(Theorem \ref{thm: Lindenstrauss--Weiss}) produces an $\varepsilon$-embedding from the information of metric mean dimension.
This is true. But it does not provide a \textit{concrete} method to construct an $\varepsilon$-embedding.
The proof of \cite[Theorem 4.2]{Lindenstrauss--Weiss} is based on a probabilistic argument, and we cannot 
figure out a deterministic algorithm from it.
\end{remark}

\subsection{Energy density} \label{subsection: energy density}

Here we prepare some facts on the energy density (\ref{eq: energy density}).
The main result (Lemma \ref{lemma: another form of energy density} below)
is a slightly tricky exchange of the supremum and limit in the definition of $\rho(\affine P^N)$.
For a positive number $\lambda$ we define $\moduli_\lambda(\affine P^N)$ as the space of 
$\lambda$-Lipschitz (i.e. $|df|\leq \lambda$) holomorphic curves $f:\affine \to \affine P^N$.
There is a natural one-to-one correspondence between $\moduli(\affine P^N)$ and $\moduli_\lambda(\affine P^N)$: 
\begin{equation} \label{eq: moduli and moduli_lambda}
   \moduli(\affine P^N)\to \moduli_\lambda (\affine P^N), \quad f(z) \mapsto f(\lambda z).
\end{equation}
For $f\in \moduli_\lambda(\affine P^N)$ we define its energy density $\rho(f)$ by 
\[ \rho(f) := \lim_{n\to \infty} \left(\frac{1}{\area(\Omega_n)}\sup_{a\in \affine} \int_{a+\Omega_n} |df|^2 dxdy\right),\]
where $\{\Omega_n\}_{n\geq 1}$ is a F{\o}lner sequence in $\affine$.
This limit exists because of the Ornstein--Weiss lemma (Lemma \ref{lemma: Ornstein--Weiss}).
In particular 
\begin{equation*}
  \rho(f) = \lim_{R\to \infty} \left(\frac{1}{\pi R^2} \sup_{a\in \affine} \int_{|z-a|<R}|df|^2 dxdy\right)
          = \lim_{L\to \infty} \left(\frac{1}{L^2} \sup_{a\in \affine} \int_{a+[0,L]^2}|df|^2 dxdy\right).
\end{equation*}
For a Brody curve $f:\affine \to \affine P^N$ we have $\rho(f(\lambda z))= \lambda^2 \rho(f)$.
Hence from (\ref{eq: moduli and moduli_lambda}) 
\[ \sup_{f\in \moduli_\lambda(\affine P^N)}\rho(f) = \lambda^2 \rho(\affine P^N).\]
In \cite[Theorem 1.3]{Tsukamoto energy density} we proved the next lemma.
\begin{lemma} \label{lemma: another form of energy density}
\[ \lambda^2 \rho(\affine P^N) = \sup_{f\in \moduli_\lambda(\affine P^N)} \rho(f) 
 = \lim_{L\to \infty} \left(\frac{1}{L^2} \sup_{f\in \moduli_\lambda(\affine P^N)} \int_{[0,L]^2} |df|^2 dxdy\right).\]
\end{lemma}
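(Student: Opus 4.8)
The plan is as follows. The first equality is exactly the one established in the paragraph preceding the lemma, so it suffices to prove the second one. Put $X:=\moduli_\lambda(\affine P^N)$, a compact metrizable space carrying the continuous $\affine$-action $(a.f)(z):=f(z+a)$, and define $\phi\colon X\to[0,\lambda^2]$ by $\phi(f):=|df|^2(0)$. I would show that \emph{both} sides of the asserted identity equal $P:=\sup_\mu\int_X\phi\,d\mu$, where $\mu$ runs over the $\affine$-invariant Borel probability measures on $X$; note that these measures form a nonempty weak-$*$ compact convex set and $\mu\mapsto\int\phi\,d\mu$ is affine and continuous, so $P$ is attained, and attained at an extreme point, i.e.\ at an ergodic measure. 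Two preliminary remarks are needed. (i) $\phi$ is continuous: in an affine chart around $0$ write $f=[1:h_1:\cdots:h_N]$ with the $h_j$ holomorphic near $0$, so $\phi(f)=\frac{1}{4\pi}\Delta\log(1+\sum_j|h_j|^2)(0)$, a function of the $1$-jet $(h_j(0),h_j'(0))_j$, and compact-open convergence of holomorphic curves forces convergence of these $1$-jets by the Cauchy estimates. (ii) $\int_{a+\Omega}|df|^2\,dxdy=\int_\Omega\phi\bigl(z.(a.f)\bigr)\,dxdy$ for all $f$, $a$ and bounded Borel $\Omega$; in particular $\int_{[0,L]^2}|df|^2\,dxdy=\int_{[0,L]^2}\phi(z.f)\,dxdy$, and by the Ornstein--Weiss lemma applied to $h(\Omega):=\sup_{a\in\affine}\int_{a+\Omega}|df|^2$ one gets $\rho(f)=\lim_{n\to\infty}\frac{1}{L_n^2}\sup_{a\in\affine}\int_{a+[0,L_n]^2}\phi(z.f)\,dxdy$ along any $L_n\to\infty$.

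I would first identify the right-hand side with $P$. For the lower bound, if $\mu$ is invariant then invariance together with Fubini gives $\frac{1}{L^2}\int_X\!\int_{[0,L]^2}\phi(z.f)\,dxdy\,d\mu(f)=\int_X\phi\,d\mu$, hence $\frac{1}{L^2}\sup_{f\in X}\int_{[0,L]^2}|df|^2\,dxdy\ge\int_X\phi\,d\mu$ for every $L$, so the $\liminf_{L\to\infty}$ of the left side is $\ge P$. For the upper bound, choose $f_L\in X$ with $\int_{[0,L]^2}|df_L|^2\,dxdy$ within $1$ of the supremum, and form the empirical measures $\nu_L:=\frac{1}{L^2}\int_{[0,L]^2}\delta_{z.f_L}\,dxdy$. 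Any weak-$*$ subsequential limit $\mu$ of $(\nu_L)$ is $\affine$-invariant, because $\{[0,L]^2\}$ is a F\o lner sequence so $\|(T_w)_*\nu_L-\nu_L\|\to0$ for each $w$; and $\int\phi\,d\mu=\lim\frac{1}{L^2}\int_{[0,L]^2}\phi(z.f_L)\,dxdy$ by continuity of $\phi$. Choosing the subsequence to realize $\limsup_{L\to\infty}$ then shows that $\limsup$ is $\le P$. Hence the limit exists and equals $P$.

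I would then identify $\sup_{f\in X}\rho(f)$ with $P$. For the upper bound, fix $f\in X$ and $L_n\to\infty$, pick $a_n$ with $\int_{a_n+[0,L_n]^2}|df|^2\,dxdy$ within $1$ of $\sup_a\int_{a+[0,L_n]^2}|df|^2\,dxdy$, and set $\nu_n:=\frac{1}{L_n^2}\int_{a_n+[0,L_n]^2}\delta_{z.f}\,dxdy$. As before a weak-$*$ limit $\mu$ is invariant, and by (ii) and continuity of $\phi$ one gets $\rho(f)\le\int\phi\,d\mu\le P$; taking the supremum over $f$ gives $\sup_f\rho(f)\le P$. For the lower bound, take an ergodic invariant $\mu^*$ with $\int\phi\,d\mu^*=P$. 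The mean ergodic theorem for the (strongly continuous, unitary) Koopman action of $\affine$ on $L^2(\mu^*)$ along the F\o lner sequence $\{[0,L]^2\}$, together with passage to an a.e.-convergent subsequence, yields some $f^*\in X$ with $\frac{1}{L_k^2}\int_{[0,L_k]^2}\phi(z.f^*)\,dxdy\to P$ along a suitable $L_k\to\infty$; then $\rho(f^*)\ge\limsup_k\frac{1}{L_k^2}\int_{[0,L_k]^2}\phi(z.f^*)\,dxdy=P$, so $\sup_f\rho(f)\ge P$. Combining the two identifications proves the second equality, hence the lemma.

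The only non-formal ingredients are the continuity of $\phi$ (a Cauchy-estimate exercise) and entirely standard ergodic theory for the amenable group $\affine$: existence of invariant measures, asymptotic invariance of empirical averages over a F\o lner sequence, the mean ergodic theorem, and attainment of $P$ at an ergodic measure. The step I would be most careful about is point (ii): it says precisely that the supremum over base points $a$ in the definition of $\rho$ is matched by letting the base point of the empirical measures vary, which is exactly what makes the ``per-curve'' quantity $\rho(f)$ and the ``whole-family'' quantity $\lim_{L\to\infty}\frac{1}{L^2}\sup_f\int_{[0,L]^2}|df|^2\,dxdy$ obey the same variational principle $P=\sup_\mu\int_X\phi\,d\mu$.
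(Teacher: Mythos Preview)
The paper does not prove this lemma here; it simply cites the external reference \cite{Tsukamoto energy density} (Theorem 1.3 there). So there is no in-paper proof to compare against.

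Your argument is correct and is a clean, self-contained route. Identifying both sides with the variational quantity $P=\sup_{\mu}\int_X\phi\,d\mu$ over $\affine$-invariant probability measures is exactly the right idea: the upper bounds come from weak-$*$ limits of empirical measures over F{\o}lner squares (these limits are invariant because the symmetric difference $(w+[0,L]^2)\triangle[0,L]^2$ has area $O(L)$), and the lower bound comes from picking an ergodic maximizer and invoking an ergodic theorem. Two small remarks. First, your continuity argument for $\phi(f)=|df|^2(0)$ is phrased via a fixed affine chart; strictly you should allow the chart to depend on $f(0)$, but since all curves in $\moduli_\lambda(\affine P^N)$ are uniformly Lipschitz, compact-open convergence forces $C^1$ convergence on compacta by the Cauchy estimates, and $|df|^2(0)$ is a smooth function of the $1$-jet, so continuity is immediate without chart bookkeeping. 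Second, in the lower bound for $\sup_f\rho(f)$ you pass from the mean ergodic theorem to an a.e.-convergent subsequence; this is fine, but you could equally invoke the pointwise ergodic theorem for $\mathbb{R}^2$-actions along F{\o}lner squares directly, avoiding the subsequence. Either way the conclusion $\rho(f^*)\ge P$ follows because the Ornstein--Weiss limit defining $\rho(f^*)$ exists along the full sequence $L\to\infty$ and dominates the centered averages $\frac{1}{L^2}\int_{[0,L]^2}\phi(z.f^*)\,dxdy$ termwise.
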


\subsection{Notations} \label{subsection: notations}

Here we gather some frequently used notations.

$\bullet$
The number $N$ (which is the complex dimension of $\affine P^N$) is fixed throughout the paper.
So we treat it as a universal constant.
For two quantities $x$ and $y$ we write 
\[ x\lesssim y \]
if there exists a universal positive constant $C$ satisfying $x\leq Cy$. 
We also write 
\[ x \lesssim_{a,b,c,\dots,k} y \]
if there exists a positive constant $C(a,b,c,\dots,k)$ which depends only on the parameters $a,b,c,\dots,k$ satisfying 
$x\leq C(a,b,c,\dots,k)y$.

$\bullet$
For a non-zero $u\in \affine^{N+1}$ we denote by $[u]$ the point in $\affine P^N$ corresponding to $u$.
For two points $u,v\in \affine^{N+1} \setminus \{0\}$, the distance between $[u]$ and $[v]$ with respect to the Fubini--Study 
metric is given by 
\[ d_{\mathrm{FS}}([u],[v]) 
   = \frac{1}{\sqrt{\pi}}\arccos \frac{|\langle u,v\rangle|}{|u|\, |v|} \in \left[0,\frac{\sqrt{\pi}}{2}\right].\]
Here $\langle u,v\rangle$ is the standard Hermitian inner product, and 
we choose the branch of $\arccos$ satisfying $\arccos 0 = \pi/2$ and $\arccos 1 = 0$.
This is a conceptually nice distance. But it is not convenient for concrete calculations.
So we introduce another distance by 
\[ d([u],[v]) := \sin\left(\sqrt{\pi}d_{\mathrm{FS}}([u],[v])\right) =  \frac{|u\wedge v|}{|u|\, |v|},\]
where $u\wedge v \in \Lambda^2(\affine^{N+1})$ is the exterior product of $u$ and $v$.
The two distances $d_{\mathrm{FS}}$ and $d$ are Lipschitz equivalent.
For $u=(1,z)$ and $v=(1,w)$ with $z,w\in \affine^N$ 
\begin{equation} \label{eq: chord distance formula} 
   d([1:z], [1:w]) = \frac{\sqrt{|z-w|^2+|z\wedge w|^2}}{\sqrt{1+|z|^2} \sqrt{1+|w|^2}}.
\end{equation}
When $N=1$, the term $z\wedge w$ vanishes 
and this is the chord distance between two points $z$ and $w$ in the Riemann sphere $\affine \cup \{\infty\}$.

$\bullet$ 
Let $f,g:\affine \to \affine P^N$ be two holomorphic curves.
For a subset $A\subset \affine$ we set 
\[ \mathbf{d}_A(f,g) := \sup_{z\in A} d(f(z),g(z)).  \]

\section{Proof of the main theorem} \label{section: proof of the main theorem}

In this section we prove Theorem \ref{thm: main theorem}.
Our proof is based on two propositions
(Propositions \ref{prop: resolution of singularity} and \ref{prop: local study around nondegenerate curve})
whose proofs occupy the rest of the papers.
A crucial ingredient of the proof is the following notion.
\begin{definition} \label{def: nondegenerate curve}
Let $R$ be a positive number, and let $\Lambda$ be a subset of $\affine$.
A holomorphic curve $f:\affine \to \affine P^N$ is said to be $R$-nondegenerate over $\Lambda$ if it satisfies 
\[ \forall a\in \Lambda: \quad \norm{df}_{L^\infty(D_R(a))} \geq 1/R.\]
\end{definition}
This is a quantitative version of an old idea of Yosida \cite{Yosida}.
In \cite{Yosida}
a meromorphic function $f\in \moduli(\affine P^1)$ is said to be of first category if 
it is $R$-nondegenerate all over the plane for some $R>0$.
This is equivalent to the condition that 
the closure of the $\affine$-orbit of $f$ in $\moduli(\affine P^1)$ does not contain a constant function.
This idea of Yosida played a quite important role in \cite{Matsuo--Tsukamoto Brody curves}
for the proof of the lower bound $\dim(\moduli(\affine P^N):\affine)\geq 2(N+1)\rho(\affine P^N)$.
Intuitively speaking,
Yosida's condition is a kind of ``transversality'', and 
Definition \ref{def: nondegenerate curve} corresponds to a ``quantitative transversality''.
The system $\moduli(\affine P^N)$ is non-singular around nondegenerate curves in the sense that
if $f\in \moduli(\affine P^N)$ is $R$-nondegenerate over $\Lambda$, then a neighborhood of 
$f$ (whose size depends on $R$ and $\Lambda$) can be described by a first-order 
deformation technique.
This is the reason of the importance of the notion.

Unfortunately some Brody curves are degenerate, and they become singularities for our analysis.
Our first main task is to resolve these singularities, and
the next proposition establishes a quantitative resolution of singularities for our purpose.
Recall the notation: $D_R(\Lambda) = \bigcup_{a\in \Lambda} D_R(a)$ for $R\geq 0$ and $\Lambda\subset \affine$.

\begin{proposition} \label{prop: resolution of singularity}
There exist $\delta_1>0$ and $C_1>1$ satisfying the following statement.
For any $\lambda>1$ we can choose $R_1=R_1(\lambda)>0$ such that for any $f\in \moduli(\affine P^N)$ and 
any bounded set $\Lambda\subset \affine$ we can construct a map
\[ \Phi: \{g\in \moduli(\affine P^N)|\, \mathbf{d}_{D_{R_1}(\Lambda)}(f,g)\leq \delta_1\} \to \moduli_\lambda(\affine P^N) \]
satisfying the following conditions.
(Recall that $\moduli_\lambda(\affine P^N)$ is the space of $\lambda$-Lipschitz holomorphic curves.)

\noindent 
(1) $\Phi(f)$ is $R_1$-nondegenerate over $\Lambda$.

\noindent 
(2) Let $g_1,g_2\in \moduli(\affine P^N)$ with 
$\mathbf{d}_{D_{R_1}(\Lambda)}(f,g_1)\leq \delta_1$ and $\mathbf{d}_{D_{R_1}(\Lambda)}(f,g_2)\leq \delta_1$.
For any $z\in \affine$
\begin{equation} \label{eq: distorsion of blow up}
   C_1^{-1}d(\Phi(g_1)(z), \Phi(g_2)(z))\leq 
   d(g_1(z),g_2(z)) \leq C_1 \sup_{|w-z|\leq 3} d(\Phi(g_1)(w),\Phi(g_2)(w)).
\end{equation}
\end{proposition}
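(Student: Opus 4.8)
The plan is to produce $\Phi(g)$ by adding to $g$ a small holomorphic perturbation supported (in a suitable sense) near the degenerate points of $f$, so that the resulting curve acquires a uniform lower bound on its spherical derivative on each ball $D_{R_1}(a)$, $a\in\Lambda$. First I would set up coordinates: since $f$ is $1$-Lipschitz, on any ball $D_r(a)$ where $\norm{df}_{L^\infty}$ is very small the curve $f$ is $C^\infty$-close to a constant map $[u_a]$, so after composing with a unitary rotation of $\affine P^N$ we may write $f=[1:f_1:\cdots:f_N]$ with $f_j$ holomorphic and small on $D_r(a)$. The degeneracy means precisely that all the $f_j$ have small derivatives there. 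The construction of $\Phi$ is then: fix a small universal $\eta>0$, and to each $a\in\Lambda$ at which $f$ fails to be $(R/2)$-nondegenerate (with $R$ to be chosen) attach a ``bump'' of the form $z\mapsto \kappa\,\chi_a(z)(z-a)$ in one of the homogeneous coordinates, where $\chi_a$ is a holomorphic (or rather: chosen via a Cauchy-type integral to remain holomorphic) cutoff of size $O(1)$ supported near $a$ at scale comparable to $1$, and $\kappa$ is a fixed small constant. Because any two such bumps can be taken at unit-separated scales and summed, and because $g$ differs from $f$ by at most $\delta_1$ on $D_{R_1}(\Lambda)$, the spherical derivative of $\Phi(g)$ at the center of each such ball becomes bounded below by a universal constant; choosing $R_1$ large (depending on $\lambda$) absorbs this into the requirement $\norm{d\Phi(g)}\geq 1/R_1$, giving (1). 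The Lipschitz bound $|d\Phi(g)|\leq\lambda$ is arranged by taking $\kappa$ small in terms of $\lambda$ and controlling overlaps of the bumps via the Ornstein--Weiss/packing properties of $\Lambda$.

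For condition (2), the key point is that $\Phi$ is, up to a bounded factor, an \emph{isometry at scale one}: the perturbation added to $g_1$ and to $g_2$ is the \emph{same} (it depends only on $f$, not on the $g_i$), so $\Phi(g_1)$ and $\Phi(g_2)$ differ by essentially the same displacement as $g_1$ and $g_2$ do, once we account for the (bounded, bounded-below) conformal factor relating the chordal distance $d$ before and after adding a fixed holomorphic vector field. The left inequality in \eqref{eq: distorsion of blow up}, $C_1^{-1}d(\Phi(g_1)(z),\Phi(g_2)(z))\leq d(g_1(z),g_2(z))$, follows from the explicit formula \eqref{eq: chord distance formula}: adding a common holomorphic term changes the denominators $\sqrt{1+|z|^2}$ by a factor bounded above and below by universal constants (using $|d\Phi(g_i)|\leq\lambda$ and $|dg_i|\leq 1$ to control how far the curves can travel), and changes the numerator $\sqrt{|z-w|^2+|z\wedge w|^2}$ by a comparable factor. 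The right inequality $d(g_1(z),g_2(z))\leq C_1\sup_{|w-z|\leq 3}d(\Phi(g_1)(w),\Phi(g_2)(w))$ is the delicate direction: one recovers $g_i$ from $\Phi(g_i)$ by subtracting the fixed perturbation, but this ``deblowing-up'' map is only defined on the image and is controlled using a Cauchy-integral estimate — the value of the difference $g_1-g_2$ at $z$ is bounded by a contour integral of $\Phi(g_1)-\Phi(g_2)$ over $|w-z|=3$, whence the supremum on the right. This is where the radius $3$ enters.

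I expect the main obstacle to be making the perturbation \emph{holomorphic while remaining local}: a naive smooth cutoff times $(z-a)$ is not holomorphic, so the bump must be built either by solving a $\bar\partial$-problem with a uniform estimate (Hörmander $L^2$ bounds, or an explicit integral kernel) or — more in the spirit of this paper — by using the deformation/gluing machinery of \cite{Matsuo--Tsukamoto Brody curves, Tsukamoto deformation} to graft a model nondegenerate curve onto $f$ near each bad point while keeping the $C^1$-distortion universal and the Lipschitz constant below $\lambda$. The second serious point is uniformity of all constants ($\delta_1$, $C_1$) in $f$ and $\Lambda$: this requires the estimates on the grafting to depend only on $N$, which is why the cutoff scale is taken to be a universal constant and why the bad points, being spread over $\Lambda$ with the bumps at unit scale, contribute only bounded overlap. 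Finally, verifying that $\Phi(g)$ still lies in $\moduli_\lambda(\affine P^N)$ — i.e. that the grafted curve is globally $\lambda$-Lipschitz, not merely near $\Lambda$ — uses that $g$ is already $1$-Lipschitz everywhere and that the total perturbation has spherical-derivative contribution at most $\lambda-1$, which is where $\lambda>1$ is used.
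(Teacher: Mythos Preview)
Your strategic outline is sound --- $\Phi$ adds to each $g$ a perturbation depending only on $f$, so the bi-Lipschitz estimates in (2) reduce to tracking a common additive shift through the distance formula~(\ref{eq: chord distance formula}), with a Cauchy estimate supplying the radius $3$ on the right --- and your suggestion to graft a model nondegenerate curve is on target. The gap is the shape of the bump. There is no ``holomorphic cutoff supported near $a$ at scale $1$'', and a linear bump $\kappa(z-a)$ grows at infinity: in projective coordinates the spherical derivative of $[1:F+\kappa(z-a):\cdots]$ then decays like $|z|^{-1}$, so a single such bump cannot give nondegeneracy on a large ball, while summing linear bumps over the many bad points in $\Lambda$ destroys the global Lipschitz bound. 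A $\bar\partial$-correction does not rescue this, since any bounded entire function is constant and the correction term would have to be as large as the bump itself.

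The paper's device (Lemma~\ref{lemma: one blow up}) is to glue the \emph{rational} curve $h=[1:a/(z-p)^3:\cdots:a/(z-p)^3]$ at each bad point $p$: replace $g=[1:G]$ by $\hat g=[1:G+H]$ with $H(z)=(a/(z-p)^3,\dots,a/(z-p)^3)$. The pole at $p$ is harmless in $\affine P^N$; the constant $a$ is fixed once so that $\norm{dh}_{L^\infty(\affine)}=1/10$; and the decay $|H|,|H'|\lesssim|z-p|^{-3}$ ensures that over a maximal $2R$-separated set of bad points $\{p_i\}\subset\Lambda$ the tails $\sum_i C_4|z-p_i|^{-3}$ are uniformly small once $R=R(\lambda)$ is large. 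This cubic-decay summability --- not a unit-scale overlap argument --- is what keeps $\Phi(g)$ in $\moduli_\lambda(\affine P^N)$ globally and lets the blow-ups at the $p_i$ be composed iteratively with controlled multiplicative error. With this model in place your argument for (2) goes through essentially as written.
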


The next proposition is a conclusion of our quantitative study of $\moduli_\lambda(\affine P^N)$
around nondegenerate curves.

\begin{proposition} \label{prop: local study around nondegenerate curve}
For any $R>0$ and $0<\varepsilon<1$ there exist positive numbers $\delta_2=\delta_2(R)$, 
$C_2=C_2(R)$ and $C_3=C_3(\varepsilon)$ satisfying the following statement.
Let $f\in \moduli_2(\affine P^N)$, and let $\Lambda\subset \affine$ be a square of side length $L\geq 1$.
Suppose $f$ is $R$-nondegenerate over $\Lambda$. Then 
\begin{equation*}
  \#_{\mathrm{sep}}\left(\{g\in \moduli_2(\affine P^N)|\, \mathbf{d}_{D_5(\Lambda)}(f,g)\leq \delta_2\}, 
  \mathbf{d}_{\Lambda},\varepsilon\right) 
  \leq (C_2/\varepsilon)^{2(N+1)\int_{\Lambda}|df|^2 dxdy + C_3 L}.
\end{equation*}
\end{proposition}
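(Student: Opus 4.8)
The plan is to reduce the proposition to an upper bound on the \emph{effective dimension} of the space of holomorphic deformations of $f$ over $\Lambda$, and then to turn that into a bound on $\varepsilon$-separated sets via Example~\ref{example: separated set of Banach ball} and Lemma~\ref{lemma: separated set}. Write $\mathcal U=\{g\in\moduli_2(\affine P^N):\mathbf d_{D_5(\Lambda)}(f,g)\le\delta_2\}$. I will produce a real normed space $V$ with
\[ \dim_{\mathbb R}V\le 2(N+1)\int_\Lambda|df|^2\,dxdy+C_3(\varepsilon)L, \]
a number $\varepsilon'$ comparable to $\varepsilon$, and a map $F\colon\mathcal U\to V$ whose image lies in a ball of radius $O(\delta_2)$ and which satisfies $\mathbf d_\Lambda(g_1,g_2)>\varepsilon\Rightarrow\|F(g_1)-F(g_2)\|>\varepsilon'$. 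Then Lemma~\ref{lemma: separated set} bounds the quantity in the proposition by $\#_{\mathrm{sep}}$ of a ball in $V$, and Example~\ref{example: separated set of Banach ball} gives $\le((\varepsilon'+2r)/\varepsilon')^{\dim_{\mathbb R}V}\le(C_2/\varepsilon)^{2(N+1)\int_\Lambda|df|^2+C_3L}$ for $r=O(\delta_2)$, once $\delta_2=\delta_2(R)$ and $C_2=C_2(R)$ are chosen large enough to absorb the various comparison constants.

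The first step is linearisation around $f$. Because $f$ is $R$-nondegenerate over $\Lambda$ (Definition~\ref{def: nondegenerate curve}), the first-order deformation machinery of \cite{Tsukamoto deformation, Matsuo--Tsukamoto Brody curves} applies with all constants depending only on $R$: for $\delta_2=\delta_2(R)$ small, each $g\in\mathcal U$ is represented in normal coordinates around $f$ by a section $\xi_g$ of $f^*T\affine P^N$ over $D_4(\Lambda)$ with $\|\xi_g\|_{C^0}\lesssim\delta_2$ (a universal comparison, by the infinitesimal form of \eqref{eq: chord distance formula}), and its Cauchy--Riemann defect, which is quadratically small, can be solved away to give an honest holomorphic section $\eta_g$ with $\|\xi_g-\eta_g\|_{C^0}\lesssim_R\delta_2^2$. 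By the same infinitesimal comparison, $d(g_1(z),g_2(z))\ge c\,|\eta_{g_1}(z)-\eta_{g_2}(z)|_h-C_R\delta_2^2$ for a universal $c>0$ and the fibre metric $h$ on $f^*T\affine P^N$, so, shrinking $\delta_2$ once more, $\mathbf d_\Lambda(g_1,g_2)>\varepsilon$ forces $\|\eta_{g_1}-\eta_{g_2}\|_{C^0(D_3(\Lambda))}\gtrsim\varepsilon$. It therefore suffices to bound the $\varepsilon'$-width ($\varepsilon'\asymp\varepsilon$) of $\mathcal H:=\{\eta_g:g\in\mathcal U\}$ in $C^0(D_3(\Lambda))$; $V$ will be the corresponding finite-dimensional approximating subspace, $F=(\text{projection onto }V)\circ(g\mapsto\eta_g)$, and one more halving of $\varepsilon'$ turns the approximation property into the required separation property.

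The core is thus the width bound $\widim_{\varepsilon'}(\mathcal H,\,C^0(D_3(\Lambda)))\le 2(N+1)\int_\Lambda|df|^2\,dxdy+C_3(\varepsilon)L$. The elements of $\mathcal H$ are holomorphic sections of $f^*T\affine P^N$ that are bounded, in the metric $h$, on the slightly larger disk $D_5(\Lambda)$; the metric $h$ has positive curvature, and the Euler sequence $0\to\mathcal O\to\mathcal O(1)^{\oplus(N+1)}\to T\affine P^N\to 0$ yields $c_1(h)=(N+1)|df|^2\,dxdy$. A Bergman-kernel estimate then makes $\mathcal H$ effectively finite dimensional: over a disk slightly larger than $\Lambda$, the number of Bergman eigenvalues of $(f^*T\affine P^N,h)$ exceeding the scale $(\varepsilon')^2$ is at most $(N+1)\int_\Lambda|df|^2$ plus an $O(L)$ collar term plus a boundary-layer contribution; every $\eta\in\mathcal H$ lies within an $L^2$-error $\lesssim\varepsilon'$ of the span of the corresponding eigensections, and interior mean-value (Cauchy) estimates upgrade this to a $C^0(D_3(\Lambda))$-error $\lesssim\varepsilon'$. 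Passing to real dimensions and absorbing the collar and boundary terms into $C_3(\varepsilon)L$ gives the bound. This is also where the quantitative elliptic analysis around nondegenerate curves of \cite{Tsukamoto deformation, Matsuo--Tsukamoto Brody curves} is used: it provides the $R$-controlled right inverse of the Cauchy--Riemann operator needed in the linearisation and the sharp comparison between the $h$-geometry of $f^*T\affine P^N$ and the energy integral.

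The hard part will be carrying out the Bergman/eigenvalue bookkeeping so that the coefficient of $\int_\Lambda|df|^2$ comes out \emph{exactly} $2(N+1)$ and \emph{independent of $\varepsilon$}, with \emph{all} of the $\varepsilon$-dependence, together with the inevitable contribution of sections concentrated near $\partial\Lambda$, confined to the perimeter term $C_3(\varepsilon)L$; a naive square-by-square estimate on a unit cover of $\Lambda$ would instead give a useless boundary term of order $L^2\log(1/\varepsilon)$, so the deformation space over $\Lambda$ must be treated as a whole. As a consistency check one notes that $R$-nondegeneracy over $\Lambda$ already forces $\int_\Lambda|df|^2\gtrsim_R L^2-O(L)$: apply $\varepsilon$-regularity for holomorphic curves on a net of scale $\asymp R$ in $\Lambda$, each net ball containing a point with spherical derivative $\ge 1/R$ and hence a definite amount of energy, so in the exponent the first term dominates and $C_3(\varepsilon)L$ is genuinely lower order.
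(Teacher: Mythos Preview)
Your overall architecture is the same as the paper's --- linearise via $g=\exp_f u$, map into a finite-dimensional normed space, and invoke Lemma~\ref{lemma: separated set} and Example~\ref{example: separated set of Banach ball} --- but two of the steps do not go through as written, and the paper's actual mechanisms are different in both places.

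First, the linearisation error. You bound $\|\xi_g-\eta_g\|_{C^0}$ \emph{absolutely} by $C_R\delta_2^2$ and then write $d(g_1,g_2)\ge c|\eta_{g_1}-\eta_{g_2}|-C_R\delta_2^2$. To conclude that $\mathbf d_\Lambda(g_1,g_2)>\varepsilon$ forces $\|\eta_{g_1}-\eta_{g_2}\|\gtrsim\varepsilon$ you would need $C_R\delta_2^2<\varepsilon/2$, and that makes $\delta_2$ depend on $\varepsilon$, contradicting $\delta_2=\delta_2(R)$. The paper never solves away the defect to produce a holomorphic $\eta_g$. It keeps the non-holomorphic $u$ and instead uses the \emph{relative} estimate of Lemma~\ref{lemma: nonlinear CR equation}: for $\delta_2$ small depending only on $R$ one has $\|\bar\partial u_1-\bar\partial u_2\|_{L^\infty}\le\tfrac{1}{2C(R)}\|u_1-u_2\|_{L^\infty}$. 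Because the $\bar\partial$-error is bounded by a small multiple of the difference itself (rather than by a fixed number), it can be absorbed on the left for every $\varepsilon$ simultaneously. Your absolute $O(\delta_2^2)$ bound cannot be.

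Second, the dimension count. Your ``Bergman eigenvalue'' step is exactly the hard part you flag at the end, and you give no mechanism for getting the coefficient of $\int_\Lambda|df|^2$ to be precisely $N+1$ with an $\varepsilon$-independent constant. The paper's device is to \emph{compactify}: it transplants $(E,h)=(f^*T\affine P^N,h_{\mathrm{FS}})$ to a bundle $\mathcal E$ over the torus $\affine/(L+8)(\mathbb Z+\sqrt{-1}\mathbb Z)$, twisting the metric by $e^{-\varphi}$ with $(-\Delta+1)\varphi=4\psi$ so that the curvature becomes uniformly positive (Lemmas~\ref{lemma: nondegeneracy implies positivity}--\ref{lemma: solving the Helmholtz equation}; this is where $R$-nondegeneracy enters). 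On the compact torus, Kodaira vanishing plus Riemann--Roch gives $\dim_{\affine}H^0(\affine/\Gamma,\mathcal E)=(N+1)\int_\Lambda|df|^2\,dxdy+O(L)$ with a \emph{universal} implied constant --- no eigenvalue counting is needed. The map into this space is $T(u)=\gamma u-\bar\partial^*(\bar\partial\bar\partial^*)^{-1}\bar\partial(\gamma u)$, with two-sided $L^\infty$ bounds from Lemma~\ref{lemma: solving d-bar equation}; the residual $\|\bar\partial u\|$ terms are killed by the relative estimate above. The collar $(-2,L+2)^2\setminus\Lambda$ is handled not by spectral tails but by a sampling lemma (Lemma~\ref{lemma: sampling}): one records $u$ at $M\lesssim_\varepsilon L$ points there, and this is the \emph{only} source of $\varepsilon$-dependence in the dimension, yielding exactly the $C_3(\varepsilon)L$ term. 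Your sketch provides no substitute for either the torus/Riemann--Roch step or the collar sampling, and without them the exponent cannot be pinned down.
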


Assuming Propositions \ref{prop: resolution of singularity} and \ref{prop: local study around nondegenerate curve},
we prove the main theorem.

\begin{proof}[Proof of Theorem \ref{thm: main theorem}]
The lower bound $\dim(\moduli(\affine P^N):\affine)\geq 2(N+1)\rho(\affine P^N)$ was proved 
in \cite{Matsuo--Tsukamoto Brody curves}.
So the problem is the upper bound.
We introduce a distance on $\moduli(\affine P^N)$ by 
$\dist(f,g) := \sup_{|z|\leq 1}d(f(z),g(z))$.
Because of the unique continuation principle, this is a distance compatible with the compact-open topology.
We will prove the upper bound on the metric mean dimension:
$\mmdim(\moduli(\affine P^N),\dist:\affine) \leq 2(N+1)\rho(\affine P^N)$.
Then we will get $\dim(\moduli(\affine P^N):\affine) \leq 2(N+1)\rho(\affine P^N)$ by the Lindenstrauss--Weiss theorem 
(Theorem \ref{thm: Lindenstrauss--Weiss}).

Take $1<\lambda<2$ and $0<\varepsilon <1$.
Let $R_1=R_1(\lambda)>0$ be the constant introduced in Proposition \ref{prop: resolution of singularity}.
For $R_1$ and $\varepsilon$, we take $\delta_2=\delta_2(R_1)$, $C_2=C_2(R_1)$ 
and $C_3=C_3(\varepsilon)>0$ (the positive constants introduced in Proposition 
\ref{prop: local study around nondegenerate curve}).

Take a positive number $L$ and $f\in \moduli(\affine P^N)$.
We apply Proposition \ref{prop: resolution of singularity} to $f$ and $\Lambda:=[-4,L+4]^2$.
Then we get the map $\Phi$ from $\{g\in \moduli(\affine P^N)|\, \mathbf{d}_{D_{R_1}(\Lambda)}(f,g)\leq \delta_1\}$ to 
$\moduli_\lambda(\affine P^N)$ satisfying the conditions (1) and (2) in Proposition \ref{prop: resolution of singularity}.
We set $\delta =\delta(R_1) = \min(\delta_1, \delta_2/C_1)$ 
(where $\delta_1>0$ and $C_1>1$ are the constants introduced in Proposition \ref{prop: resolution of singularity})
and consider the ball 
$B_\delta(f,\dist_{D_{R_1+10}([0,L]^2)})$ of radius $\delta$ around $f$ in $\moduli(\affine P^N)$ 
with respect to $\dist_{D_{R_1+10}([0,L]^2)}$. This is contained in the set 
$\{g\in \moduli(\affine P^N)|\, \mathbf{d}_{D_{R_1}(\Lambda)}(f,g)\leq \delta_1\}$.
For $g\in B_\delta(f, \dist_{D_{R_1+10}([0,L]^2)})$, by (\ref{eq: distorsion of blow up}) 
\begin{equation*}
   \mathbf{d}_{D_5(\Lambda)}(\Phi(f),\Phi(g)) 
   \leq C_1 \mathbf{d}_{D_5(\Lambda)}(f,g) 
   \leq C_1 \dist_{D_{R_1+10}([0,L]^2)}(f,g) \leq \delta_2.
\end{equation*}
Hence $\Phi(B_\delta(f,\dist_{D_{R_1+10}([0,L]^2)}))$ is contained in 
$\{g\in \moduli_2(\affine P^N)|\, \mathbf{d}_{D_5(\Lambda)}(\Phi(f),g)\leq \delta_2\}$.
Then by applying Proposition \ref{prop: local study around nondegenerate curve} to 
$\Phi(f)$ and $\Lambda = [-4,L+4]^2$ (note that $\Phi(f)$ is $R_1$-nondegenerate over $\Lambda$),
\[ \#_{\mathrm{sep}}(\Phi(B_\delta(f,\dist_{D_{R_1+10}([0,L]^2)})), \mathbf{d}_{\Lambda},\varepsilon/(3C_1))
  \leq (3C_1 C_2/\varepsilon)^{2(N+1)\int_{\Lambda}|d\Phi(f)|^2 dxdy + C_3(L+8)}. \]
By (\ref{eq: distorsion of blow up}), for $g_1,g_2\in B_\delta(f,\dist_{D_{R_1+10}([0,L]^2)})$
\[ \dist_{[0,L]^2}(g_1,g_2) \leq C_1\mathbf{d}_{\Lambda}(\Phi(g_1),\Phi(g_2)).  \]
Hence $\dist_{[0,L]^2}(g_1,g_2) > \varepsilon/3$ implies $\mathbf{d}_\Lambda(\Phi(g_1),\Phi(g_2)) > \varepsilon/3C_1$.
By (\ref{eq: separated set and spanning set}) and  Lemma \ref{lemma: separated set}
\begin{equation*}
   \begin{split}
   \#(B_\delta(f,\dist_{D_{R_1+10}([0,L]^2)}),\dist_{[0,L]^2},\varepsilon)
   &\leq \#_{\mathrm{sep}}(B_\delta(f,\dist_{D_{R_1+10}([0,L]^2)}),\dist_{[0,L]^2},\varepsilon/3) \\
   &\leq \#_{\mathrm{sep}}(\Phi(B_\delta(f,\dist_{D_{R_1+10}([0,L]^2)})), \mathbf{d}_{\Lambda},\varepsilon/(3C_1)) \\
   &\leq (3 C_1 C_2/\varepsilon)^{2(N+1)\int_{\Lambda}|d\Phi(f)|^2 dxdy + C_3(L+8)}.  
   \end{split}
\end{equation*}
Hence the supremum of $\log \#(B_\delta(f,\dist_{D_{R_1+10}([0,L]^2)}),\dist_{[0,L]^2},\varepsilon)$ over 
$f\in \moduli(\affine P^N)$ is bounded by (note $\Phi(f)\in \moduli_\lambda(\affine P^N)$)
\[ (\const_{R_1} +|\log\varepsilon|)\left(2(N+1)\sup_{g\in \moduli_\lambda(\affine P^N)}
   \int_{[0,L]^2}|dg|^2 dxdy + \const_\varepsilon\cdot L + \const_\varepsilon\right).\]
By Lemma \ref{lemma: another form of energy density} 
\[ \lim_{L\to \infty}\left(\frac{1}{L^2}
   \sup_{g\in \moduli_\lambda(\affine P^N)} \int_{[0,L]^2}|dg|^2 dxdy\right) = \lambda^2\rho(\affine P^N).\]
Therefore 
\begin{equation*}
  \begin{split}
   \limsup_{L\to \infty} &\frac{\sup_{f\in \moduli(\affine P^N)}
    \log \#(B_\delta(f,\dist_{D_{R_1+10}([0,L]^2)}),\dist_{[0,L]^2},\varepsilon)}{L^2}\\
   &\leq 
   (\const_{R_1} + |\log\varepsilon|) 2(N+1)\lambda^2 \rho(\affine P^N).
  \end{split}
\end{equation*}
Divide this by $|\log \varepsilon|$ and let $\varepsilon \to 0$. 
Then by the formula (\ref{eq: local formula of metric mean dimension}) in Lemma \ref{lemma: local metric mean dimension}
\[ \dim(\moduli(\affine P^N), \dist:\affine) \leq 2(N+1)\lambda^2 \rho(\affine P^N).\]
Here $\lambda$ is an arbitrary number between $1$ and $2$. So we can let $\lambda\to 1$ and conclude  
$\dim(\moduli(\affine P^N),\dist:\affine) \leq 2(N+1)\rho(\affine P^N)$.
\end{proof}

\section{Blowing up degenerate curves} \label{section: blowing up degenerate curves}

We prove Proposition \ref{prop: resolution of singularity} in this section.
Here is the idea.
If a holomorphic curve $f$ is degenerate around some point $p\in \affine$, then 
we ``blow up'' it by gluing a sufficiently concentrated rational curve at the point $p$.
Then the resulting curve $\hat{f}$ is nondegenerate around $p$.
We repeatedly apply this procedure to $f$ until all the degenerate regions are eliminated.
This technique was first developed in \cite{Matsuo--Tsukamoto Brody curves} for a single curve $f$.
Here we need to apply it to a family of holomorphic curves.
A main new issue is to analyze the changes of the metric structure of the family under blow-ups.
We denote by $\mathcal{H}(\affine P^N)$ the set of all holomorphic curves 
$f:\affine \to \affine P^N$.
The following lemma studies the effect of one blow-up in detail.

\begin{lemma} \label{lemma: one blow up}
We can choose $0<\delta_3<1$, $R_2>0$ and $C_4>1$ so that for any $p\in \affine$, $q\in \affine P^N$
and $R\geq R_2$ there exists a map 
\[ \Psi: \{f\in \mathcal{H}(\affine P^N)|\, f(D_{R}(p))\subset B_{\delta_3}(q)\}\to \mathcal{H}(\affine P^N), \quad 
   f\mapsto \hat{f}, \]
satisfying the following conditions.

\noindent 
(1) For any holomorphic curve $f:\affine \to \affine P^N$ with $f(D_R(p))\subset B_{\delta_3}(q)$
(i.e. $d(q,f(z))\leq \delta_3$ for all $z\in D_{R}(p)$), the curve $\hat{f}$ satisfies 
\begin{align}
  &d(f(z),\hat{f}(z)) \leq \frac{C_4}{|z-p|^3},  \label{eq: distance between f and f hat}\\
  &\frac{1}{100} < \norm{d\hat{f}}_{L^\infty(D_{R/2}(p))} <1,  \label{eq: nondegeneracy of f hat}\\
  &\left||df|(z)-|d\hat{f}|(z)\right| 
  \leq \frac{C_4}{|z-p|^3}|df|(z) + \frac{C_4}{|z-p|^4} \quad (|z-p|\geq 1). \label{eq: difference between df and df hat} 
\end{align}
  
\noindent 
(2) Let $f,g:\affine\to \affine P^N$ be holomorphic curves satisfying 
$f(D_{R}(p))\subset B_{\delta_3}(q)$ and $g(D_{R}(p))\subset B_{\delta_3}(q)$.
For $|z-p|\leq 1$, the curves $\hat{f}$ and $\hat{g}$ satisfy 
\begin{equation} \label{eq: distance between f and g near the center}
   C_4^{-1} d(\hat{f}(z),\hat{g}(z))\leq  d(f(z),g(z)) \leq 
   C_4 \sup_{|w-p|\leq 2} d(\hat{f}(w),\hat{g}(w)).
\end{equation}
For $|z-p|\geq 1$ they satisfy 
\begin{equation} \label{eq: distance between f and g far from the center}
  \begin{split}
   d(\hat{f}(z),\hat{g}(z)) \leq \left(1+\frac{C_4}{|z-p|^3}\right) d(f(z),g(z)), \\
   d(f(z),g(z)) \leq \left(1+\frac{C_4}{|z-p|^3}\right) d(\hat{f}(z),\hat{g}(z)).
  \end{split}
\end{equation}  
\end{lemma}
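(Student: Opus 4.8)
The plan is to construct $\hat f$ \emph{explicitly} by gluing onto $f$ a fixed, spatially concentrated rational curve at the point $p$, and then to verify all the estimates by direct (if lengthy) calculation. After composing with a unitary transformation of $\affine^{N+1}$, which preserves both $d$ and $|df|$, and translating the variable, I may assume $p=0$ and $q=[1:0:\cdots:0]$. The hypothesis $f(D_R(0))\subset B_{\delta_3}(q)$ then forces the zeroth homogeneous coordinate of $f$ to be nonvanishing on $D_R(0)$, so on that disk $f=[1:f_1:\cdots:f_N]$ with $|f_1|^2+\cdots+|f_N|^2\le\delta_3^2/(1-\delta_3^2)$. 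Fix a universal ``concentration parameter'' $c>0$ and put $\Psi(f):=\hat f$ where
\[
 \hat f:=[\,z^3 f_0 : z^3 f_1 + c\,f_0 : z^3 f_2 : \cdots : z^3 f_N\,].
\]
This is a globally defined holomorphic curve: the homogeneous components are entire, and they have no common zero because $f_0(0)\ne0$ and $f$ itself has none. On $D_R(0)$ it is simply the affine perturbation $\hat f=[1:f_1+c/z^3:f_2:\cdots:f_N]$, i.e. the chart coordinate $f_1$ is shifted by the single meromorphic function $\eta(z)=c/z^3$, which has a triple pole at $0$ and decays like $|z|^{-3}$ at $\infty$. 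The triple pole is forced by the $|z-p|^{-3}$ decay demanded in (\ref{eq: distance between f and f hat}), and $3$ is the smallest integer exponent for which series $\sum_p|z-p|^{-3}$ over a lattice converge, which is what makes the iterated version in Proposition \ref{prop: resolution of singularity} usable; the constants $c$, then $\delta_3$, $R_2$, $C_4$ are chosen as universal constants in that order.

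For part (1): estimate (\ref{eq: distance between f and f hat}) is trivial for $|z|<1$ since $d\le1\le C_4/|z|^3$; for $1\le|z|\le R$ a short calculation with (\ref{eq: chord distance formula}) gives $d(f(z),\hat f(z))\le|\eta(z)|=c/|z|^3$; and for $|z|>R$, where $f$ is uncontrolled, I pass to the homogeneous representatives $\hat f=f+\mu f_0 e_1$ with $\mu=c/z^3$ and $e_1=(0,1,0,\dots,0)$, and bound $d(f,\hat f)\le2|\mu|$, valid once $R_2$ is large enough that $|\mu|\le\tfrac12$ there. For the nondegeneracy (\ref{eq: nondegeneracy of f hat}): since $f$ maps $D_R(0)$ into a $\delta_3$-ball, Cauchy's estimate gives $|df|\lesssim\delta_3/R$ on $D_{R/2}(0)$, so $\hat f$ is a $\delta_3$-small perturbation of the explicit model $[1:c/z^3:0:\cdots:0]$; the spherical derivative of that model is a universal bump function which is everywhere $<1$ on $\affine$ and attains a value in $(1/100,1)$ at $|z|\asymp c^{1/3}$, so choosing $R_2$ with $R_2/2\ge c^{1/3}$ and $\delta_3$ small enough absorbs the perturbation. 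Estimate (\ref{eq: difference between df and df hat}) follows from $|df|^2=\tfrac1\pi|F\wedge F'|^2/|F|^4$ together with the bilinear expansion $\hat F\wedge\hat F'=F\wedge F'+\eta'(F\wedge e_1)-\eta(F'\wedge e_1)$ (and its homogeneous analogue in $\mu,\mu'$ for $|z|>R$): the terms carrying $\eta$ or $\mu$ are of relative size $|z|^{-3}$ and give the $\tfrac{C_4}{|z|^3}|df|$ summand, while the terms carrying $\eta'$ or $\mu'$ are of absolute size $|z|^{-4}$ and give the $\tfrac{C_4}{|z|^4}$ summand.

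For part (2) I set $h:=(f_1-g_1,\dots,f_N-g_N)$, holomorphic on $D_R(0)$. A short computation with (\ref{eq: chord distance formula}), using that $f,g$ take values in a small ball there, shows $d(f(z),g(z))\asymp|h(z)|$ on $D_R(0)$ with universal constants, and the same computation on the circle $|w|=1$ (where $|\eta|\asymp c$) shows $d(\hat f(w),\hat g(w))\asymp|h(w)|$ there. For $|z|\le1$: the left inequality of (\ref{eq: distance between f and g near the center}) holds because $|\eta(z)|\ge c$ is then bounded below, which forces the denominators in $d(\hat f(z),\hat g(z))$ to be of order $|\eta|^2$ against a numerator of order $|\eta||h|$, so $d(\hat f(z),\hat g(z))\lesssim|h(z)|\lesssim d(f(z),g(z))$; the right inequality is the maximum principle applied to the subharmonic function $|h|^2$, giving $\sup_{|z|\le1}|h|^2=\sup_{|w|=1}|h|^2$ and hence $d(f(z),g(z))\lesssim|h(z)|\le\sup_{|w|=1}|h(w)|\lesssim\sup_{|w|\le2}d(\hat f(w),\hat g(w))$ (this is why the radius $2$ appears in the statement). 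The bounds (\ref{eq: distance between f and g far from the center}) for $|z|\ge1$ are perturbative: one expands the ratio $d(\hat f,\hat g)/d(f,g)$ using $\hat f\wedge\hat g=f\wedge g+\eta\,(f-g)\wedge e_1$ in the numerator and $|\hat f|^2=|f|^2+2\,\mathrm{Re}(\bar f_1\eta)+|\eta|^2$ in the denominator, checking that each changes by a factor $1+O(|\eta|)=1+O(|z|^{-3})$ for $1\le|z|\le R$, with the homogeneous version ($\eta$ replaced by $\mu f_0$, $\mu g_0$) handling $|z|>R$.

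The step I expect to be the main obstacle is the \emph{uniform} treatment of the two regimes. Inside $D_R(0)$ everything is a clean perturbation of an explicit model in one affine chart; outside $D_R(0)$ the curve $f$ is entirely uncontrolled, so there all the estimates must be carried out with homogeneous representatives and must still produce a constant $C_4$ independent of $R$. Verifying (\ref{eq: difference between df and df hat}) and the far-field half of (\ref{eq: distance between f and g far from the center}) in this uncontrolled regime — in particular, cleanly separating the genuinely multiplicative errors, which must be matched against $|df|$ respectively $d(f,g)$, from the purely additive ones — is the technical heart of the lemma. It requires only careful exterior-algebra bookkeeping rather than a new idea, but it is where the exponents $3$ and $4$ and the radius $2$ in the statement get pinned down.
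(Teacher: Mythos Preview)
Your approach is essentially the same as the paper's: reduce to $p=0$, $q=[1:0:\cdots:0]$, write $f=[1:F]$ in the affine chart, and set $\hat f=[1:F+H]$ with $H$ a fixed vector of functions having a cubic pole at the origin. The paper perturbs all $N$ coordinates symmetrically by $a/z^3$ rather than just one, but this is cosmetic.

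The one point worth flagging is your anticipated ``main obstacle'': you expect to have to split into the controlled regime $|z|\le R$ and the uncontrolled regime $|z|>R$, working with homogeneous representatives in the latter. In fact this split is unnecessary. The paper writes $f=[1:f_1:\cdots:f_N]$ with the $f_i$ \emph{meromorphic} on all of $\affine$, so $F$ and $\hat F=F+H$ are defined globally (with poles where the original $f_0$ vanishes), and the identities
\[
d([1:F],[1:\hat F])=\frac{\sqrt{|H|^2+|F\wedge H|^2}}{\sqrt{1+|F|^2}\sqrt{1+|\hat F|^2}}\le |H|,\qquad
\left|\frac{1}{1+|\hat F|^2}-\frac{1}{1+|F|^2}\right|\le\frac{|H|+|H|^2}{1+|F|^2}
\]
hold at every point where $F$ is finite (and extend by continuity across the poles, where $f=\hat f$). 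The key observation is that these inequalities, and the analogous one for the numerator of $|df|$, are already uniform in $|F|$: the potentially large factor $|F|$ in $|F\wedge H'|$ or $|\langle\hat F,H\rangle|$ is always absorbed by the $1+|F|^2$ in the denominator. So the ``exterior-algebra bookkeeping'' you describe is needed, but only once, and there is no separate uncontrolled regime to worry about. Your homogeneous-coordinate argument for $|z|>R$ would of course also work, but it duplicates effort.
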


\begin{proof}
We can assume $p=0$ and $q=[1:0:\dots:0]$ from the symmetry.
The positive constants $\delta_3$, $R_2$ and $C_4$ are chosen so that 
\[ R_2 \gg 1, \quad \delta_3 \ll \frac{1}{R_2^4}, \quad C_4 \gg 1. \]
Specific conditions will be imposed on them through the argument.
Take a holomorphic curve $f:\affine \to \affine P^N$ with $f(D_{R})\subset B_{\delta_3}(q)$.
We write $f = [1:f_1:\dots:f_N]$ ($f_i$: meromorphic function) and set 
$F(z) :=(f_1(z),\dots,f_N(z))$.
Then $f(z)=[1:F(z)]$ and the spherical derivative $|df|$ is expressed by 
\[ |df|(z) = \frac{\sqrt{|F'(z)|^2 + |F(z)\wedge F'(z)|^2}}{\sqrt{\pi}(1+|F(z)|^2)}.\]
From $f(D_R)\subset B_{\delta_3}(q)$ with $\delta_3\ll 1$ and $R\geq R_2\gg 1$, we can assume 
\begin{equation} \label{eq: restriction on F}
  |F(z)| < 2\delta_3, \quad |F'(z)| < \delta_3 \quad (|z|\leq R/2).
\end{equation}

Define a holomorphic curve $h:\affine \to \affine P^N$ by 
$h(z) = [1:a/z^3:\dots:a/z^3]$. We choose a positive constant $a$ so that 
$\norm{dh}_{L^\infty(\affine)} = 1/10$.
Here we have (set $H(z) := (a/z^3,\dots,a/z^3)$)
\[ |dh|(z) = \frac{|H'(z)|}{\sqrt{\pi}(1+|H(z)|^2)} = \frac{3a|z|^2\sqrt{N}}{\sqrt{\pi}(|z|^6+Na^2)}.\] 
By $R_2\gg 1$ we can assume $\norm{dh}_{L^\infty(D_{R_2/2})} =1/10$.
We define a holomorphic curve 
$\hat{f}:\affine \to \affine P^N$ by setting $\hat{f}(z):= [1:f_1(z)+a/z^3:\dots:f_N(z)+a/z^3]$.
We set $\hat{F}(z) := F(z)+H(z)$. Then $\hat{f}=[1:\hat{F}]$.
We will show that the map $f\mapsto \hat{f}$ satisfies the required conditions.

First, by the distance formula (\ref{eq: chord distance formula}) 
\[ d(f(z),\hat{f}(z)) = \frac{\sqrt{|H|^2+ |F\wedge H|^2}}{\sqrt{1+|F|^2}\sqrt{1+|\hat{F}|^2}}
   \leq \frac{\sqrt{|H|^2+|F|^2|H|^2}}{\sqrt{1+|F|^2}} = |H| = \frac{a\sqrt{N}}{|z|^3}. \]
This shows (\ref{eq: distance between f and f hat}).
Next we consider (\ref{eq: nondegeneracy of f hat}) and (\ref{eq: difference between df and df hat}).
We have 
\[ |d\hat{f}| = \frac{\sqrt{|F'+H'|^2+|F\wedge F' + F\wedge H' + H\wedge F'|^2}}{\sqrt{\pi}(1+|F+H|^2)}.\]
For $|z|\leq R_2/2$, we can assume $|H'(z)|\geq 2\delta_3$ and $|H(z)|\geq 4\delta_3$ by $\delta_3 \ll 1/R_2^4$.
So by (\ref{eq: restriction on F}) 
\begin{equation} \label{eq: estimate on F hat near the origin}
   \bigl||F'+H'|-|H'|\bigr| \leq \delta_3 \leq \frac{|H'|}{2}, \quad 
   \bigl||F+H|-|H|\bigr| \leq 2\delta_3 \leq \frac{|H|}{2} \quad \text{on $D_{R_2/2}$}.
\end{equation}
Hence $|F'+H'|\geq |H'|/2$ and $|F+H|\leq 2|H|$. Then
\[ |d\hat{f}| \geq \frac{|H'|/2}{\sqrt{\pi}(1+4|H|^2)} \geq \frac{|dh|}{8}.\]
This implies a half of (\ref{eq: nondegeneracy of f hat}):
$\norm{d\hat{f}}_{L^\infty(D_{R_2/2})}\geq \norm{dh}_{L^\infty(D_{R_2/2})}/8 = 1/80 >1/100$.
For $|z|\leq R_2/2$ we also have $|F'+H'|\leq 2|H'|$, $|F+H|\geq |H|/2$ and 
\[ |F\wedge F' + F\wedge H' + H\wedge F'|\leq |F| |F'| + |F| |H'| +|H| |F'|
   \leq 2\delta_3^2 + 2\delta_3 |H'| + \delta_3 |H| \leq |H'| \]
by (\ref{eq: restriction on F}), $\delta_3 \ll 1 $ and $R_2|H'(z)| \gtrsim |H(z)|\geq 2\delta_3$.
Therefore (recall $|dh|\leq 1/10$)
\[ |d\hat{f}|(z) \leq \frac{\sqrt{4|H'|^2+|H'|^2}}{\sqrt{\pi}(1+|H|^2/4)} \leq 
   \frac{4\sqrt{5}|H'|}{\sqrt{\pi}(1+|H|^2)} = 4\sqrt{5}|dh| < 1  .\]
Thus $1/100 < \norm{d\hat{f}}_{L^\infty(D_{R_2/2})} < 1$.
(Note that we have not yet finished to prove (\ref{eq: nondegeneracy of f hat}).)

From the triangle inequality 
\begin{equation*}
  \begin{split} 
   \bigl|\sqrt{|F'+H'|^2+|F\wedge F' + F\wedge H' + H\wedge F'|^2}-
   \sqrt{|F'|^2+|F\wedge F'|^2}\,\bigr|  \\
   \leq  \sqrt{|H'|^2+|F\wedge H'+H\wedge F'|^2} \leq |H'| + |F| |H'| + |H| |F'|.
  \end{split}
\end{equation*}
As we have
\[ \frac{1+|F|^2}{1+|\hat{F}|^2} = \frac{1+|\hat{F}|^2 -2\mathrm{Re}\langle \hat{F},H\rangle + |H|^2}{1+|\hat{F}|^2}
   = 1 + \frac{-2\mathrm{Re}\langle \hat{F},H\rangle + |H|^2}{1+|\hat{F}|^2},\]
we get
\begin{equation} \label{eq: difference between the denominators}
   \left|\frac{1}{1+|\hat{F}|^2} - \frac{1}{1+|F|^2}\right| \leq \frac{|H|+|H|^2}{1+|F|^2}.
\end{equation}
Then by a straightforward calculation, we can check that for $|z|\geq 1$ (note $|H'|\lesssim |H|\lesssim 1$) 
\[ \left| |d\hat{f}|(z) -|df|(z)\right| \lesssim  |H|\, |df|(z) + |H'|.\]
This shows (\ref{eq: difference between df and df hat}).
For $R_2/2\leq |z|\leq R/2$ we have $|df|(z) \lesssim \delta_3$ and 
\[ |d\hat{f}|(z) \lesssim (1+|H|) \delta_3 + |H'| \ll 1 \quad (\text{by $\delta_3\ll 1$ and $R_2\gg 1$}).\]
Therefore we have $|d\hat{f}|<1$ over $R_2/2\leq |z|\leq R/2$.
Combining this with $1/100 < \norm{df}_{L^\infty(D_{R_2/2})}<1$, we get (\ref{eq: nondegeneracy of f hat}).

Next we consider (2).
Let $f=[1:F]$ and $g=[1:G]$ be two holomorphic curves with 
$f(D_R)\subset B_{\delta_3}(q)$ and $g(D_R)\subset B_{\delta_3}(q)$.
We have $\hat{f} = [1:\hat{F}] = [1:F+H]$, $\hat{g} = [1:\hat{G}] = [1:G+H]$ and 
\[ d(\hat{f}(z), \hat{g}(z)) = \frac{\sqrt{|\hat{F}-\hat{G}|^2 + |\hat{F}\wedge \hat{G}|^2}}{\sqrt{1+|\hat{F}|^2}\sqrt{1+|\hat{G}|^2}},\]
\[ \hat{F}-\hat{G} = F-G, \quad \hat{F}\wedge \hat{G} = F\wedge G + (F-G)\wedge H.\]
Suppose $|z|\leq 1$. We have 
$|\hat{F}|\geq |H|/2$ and $|\hat{G}|\geq |H|/2$ by 
(\ref{eq: estimate on F hat near the origin}).
Then 
\begin{equation*}
   \begin{split}
   d(\hat{f}(z),\hat{g}(z)) &\leq 
   \frac{\sqrt{|F-G|^2+|F\wedge G|^2} + |F-G|\, |H|}{1+|H|^2/4} 
   \leq 2\sqrt{|F-G|^2+|F\wedge G|^2}  \\
   &\lesssim d(f(z),g(z)) \quad (\text{$|F(z)|, |G(z)|\leq 2\delta_3$ by (\ref{eq: restriction on F})}).
   \end{split}
\end{equation*}
This proves the first part of (\ref{eq: distance between f and g near the center}).
For $|z|\leq 1$ the Cauchy estimate shows
\[ |F(z)-G(z)|\lesssim \sup_{|w|=2}|F(w)-G(w)| = \sup_{|w|=2}|\hat{F}(w)-\hat{G}(w)|,\]
\begin{equation*}
   \begin{split}
   |F(z)\wedge G(z)| &\lesssim \sup_{|w|=2}|F(w)\wedge G(w)| \leq 
   \sup_{|w|=2} \left(|\hat{F}(w)\wedge \hat{G}(w)|+|\hat{F}(w)-\hat{G}(w)|\, |H(w)|\right) \\
   &\lesssim \sup_{|w|=2} \sqrt{|\hat{F}(w)-\hat{G}(w)|^2 + |\hat{F}(w)\wedge \hat{G}(w)|^2}
   \quad (\text{by $H(w)\lesssim 1$ on $|w|=2$}).
   \end{split}
\end{equation*}
Since $|\hat{F}|, |\hat{G}|\lesssim 1$ on $|w|=2$, we get 
\[ d(f(z),g(z)) \leq |F(z)-G(z)|+|F(z)\wedge G(z)| \lesssim \sup_{|w|=2}d(\hat{f}(w),\hat{g}(w)).\]
This finishes the proof of (\ref{eq: distance between f and g near the center}).

Suppose $|z|\geq 1$. By (\ref{eq: difference between the denominators}) 
\[ \frac{1}{1+|\hat{F}|^2}\leq \frac{1+\const\, |H|}{1+|F|^2}, \quad 
   \frac{1}{1+|\hat{G}|^2}\leq \frac{1+\const\, |H|}{1+|G|^2}.\] 
As we have 
\begin{equation*}
  \begin{split}
   \sqrt{|\hat{F}-\hat{G}|^2+|\hat{F}\wedge \hat{G}|^2} &\leq 
   \sqrt{|F-G|^2+|F\wedge G|^2} + |F-G|\, |H| \\
   &\leq (1+|H|)\sqrt{|F-G|^2+|F\wedge G|^2},
  \end{split}
\end{equation*}
\[ d(\hat{f}(z),\hat{g}(z)) \leq (1+\const\, |H|)^2\frac{\sqrt{|F-G|^2+|F\wedge G|^2}}{\sqrt{1+|F|^2}\sqrt{1+|G|^2}}
   \leq (1+\const\, |H|)d(f(z),g(z)).\]
Here we have used $|H|\lesssim 1$.
Similarly we get 
$d(f(z),g(z))\leq (1+\const\, |H|)d(\hat{f}(z),\hat{g}(z))$.
This proves (\ref{eq: distance between f and g far from the center}).
We have finished to prove the lemma.
\end{proof}

We restate Proposition \ref{prop: resolution of singularity} for the convenience of readers.

\begin{proposition}[= Proposition \ref{prop: resolution of singularity}]
For any $\lambda>1$ we can choose $R_1 = R_1(\lambda)>0$ such that for any $f\in \moduli(\affine P^N)$ and 
any bounded set $\Lambda\subset \affine$ we can construct a map 
\[ \Phi: \{g\in \moduli(\affine P^N)|\, \mathbf{d}_{D_{R_1}(\Lambda)}(f,g)\leq \delta_3/4\} \to \moduli_\lambda(\affine P^N) \]
satisfying the following conditions.

\noindent 
(1) $\Phi(f)$ is $R_1$-nondegenerate over $\Lambda$.

\noindent 
(2) Let $g_1,g_2\in \moduli(\affine P^N)$ with 
$\mathbf{d}_{D_{R_1}(\Lambda)}(f,g_1)\leq \delta_3/4$ and $\mathbf{d}_{D_{R_1}(\Lambda)}(f,g_2)\leq \delta_3/4$.
For any $z\in \affine$
\begin{equation*}
   d(\Phi(g_1)(z), \Phi(g_2)(z))\lesssim  d(g_1(z),g_2(z))
   \lesssim \sup_{|w-z|\leq 3} d(\Phi(g_1)(w),\Phi(g_2)(w)).
\end{equation*}
\end{proposition}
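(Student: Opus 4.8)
The plan is to obtain $\Phi$ by iterating the one‑blow‑up construction $\Psi$ of Lemma~\ref{lemma: one blow up} over a sparse set of ``degenerate centres'' of $f$. Keep the universal constants $\delta_3,R_2,C_4$ of that lemma, and let $r_0>1$ be the (universal) radius at which the model curve $z\mapsto[1:a/z^3:\cdots:a/z^3]$ used in its proof attains its peak spherical derivative $1/10$. Given $\lambda>1$, I would fix a blow‑up scale $R=R(\lambda)\ge R_2$ and a separation $D=D(\lambda)>\max(20,2R)$, both large enough (in terms of $\lambda$ and the universal constants) that \emph{every} perturbative tail $\sum_j C_4|z-p_j|^{-3}$ or $\sum_j C_4|z-p_j|^{-4}$ over a $D$‑separated set of centres whose points avoid a fixed disk $D_{R/2}(z)$ stays below a prescribed small threshold (used below as $\min(\lambda-1,\delta_3/4,\dots)$), and finally a radius $R_1=R_1(\lambda)$ far bigger than $D$, $R_2$ and $R/\delta_3$. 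For $f$ and bounded $\Lambda$ I would put
\[\Lambda_{\mathrm{deg}}:=\bigl\{\,a\in D_{R_1/2}(\Lambda):\ \norm{df}_{L^\infty(D_R(a))}<\delta_3/(CR)\,\bigr\},\]
with $C=C(N)$ so chosen --- via $d\le\sqrt{\pi}\,d_{\mathrm{FS}}$ and the fact that $|df|$ is the local Lipschitz constant of $f$ for $d_{\mathrm{FS}}$ --- that $a\in\Lambda_{\mathrm{deg}}$ forces $f(D_R(a))\subset B_{\delta_3/4}(f(a))$; then I would take $\{p_1,\dots,p_k\}$ to be a maximal $D$‑separated subset of $\Lambda_{\mathrm{deg}}$ (finite since $\Lambda$ is bounded) and set $q_j:=f(p_j)$. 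For $g$ with $\mathbf{d}_{D_{R_1}(\Lambda)}(f,g)\le\delta_3/4$, I would define $\Phi(g):=g^{(k)}$, where $g^{(0)}:=g$ and $g^{(j)}$ is the blow‑up of $g^{(j-1)}$ at $p_j$ towards $q_j$ --- i.e.\ the $U(N+1)$‑conjugate of $\Psi_{p_j,[1:0:\cdots:0],R}$ adapted to the chart centred at $q_j$, all conclusions of Lemma~\ref{lemma: one blow up} being $U(N+1)$‑invariant. Legality of the $j$‑th step, $g^{(j-1)}(D_R(p_j))\subset B_{\delta_3}(q_j)$, follows because $d(g,q_j)\le d(g,f)+d(f,q_j)\le\delta_3/2$ on $D_R(p_j)$ while, by~(\ref{eq: distance between f and f hat}), the earlier blow‑ups (centres $\ge D-R\ge D/2$ away) displace the curve there by $\sum_{i<j}C_4|z-p_i|^{-3}\lesssim C_4/D^3<\delta_3/4$.

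Next I would check $\Phi(g)\in\moduli_\lambda(\affine P^N)$ and condition~(1). On $D_{R/2}(p_j)$ one has $|dg^{(j)}|<1$ straight from~(\ref{eq: nondegeneracy of f hat}) (this bound on $g^{(j)}$ does not see the earlier steps), and any later blow‑up --- whose centre is $\ge D/2$ away --- only multiplies $|d\,\cdot\,|$ by $1+C_4|z-p|^{-3}$ and adds $C_4|z-p|^{-4}$; off all the $D_{R/2}(p_j)$ one telescopes~(\ref{eq: difference between df and df hat}) starting from $|dg|\le1$. By the size of $R$ and $D$ the accumulated derivative stays below $\lambda$, so $\Phi(g)\in\moduli_\lambda(\affine P^N)$. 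For condition~(1), apply the construction to $g=f$. Near $p_j$ the curve $g^{(j)}$ equals the model up to $O(\delta_3)$, so the proof of~(\ref{eq: nondegeneracy of f hat}) gives $\norm{dg^{(j)}}_{L^\infty(D_{R_2/2}(p_j))}\ge1/80$, and the later blow‑ups change this by $\lesssim C_4/D^3$; hence $\norm{d\,\Phi(f)}_{L^\infty(D_{R_2/2}(p_j))}\ge1/100$. Now fix $a\in\Lambda$. If some $p_j\in D_{R_1-R_2}(a)$, then $D_{R_2/2}(p_j)\subset D_{R_1}(a)$ and we are done. Otherwise, since $D\le R_1-R_2$, maximality forces $a\notin\Lambda_{\mathrm{deg}}$, so there is $z^{\ast}\in D_R(a)$ with $|df|(z^{\ast})\ge\delta_3/(2CR)$; every $p_j$ lies at distance $>(R_1-R_2)-R\ge D$ from $z^{\ast}$, so telescoping~(\ref{eq: difference between df and df hat}) (with $|dg^{(j)}|<2$) gives $|d\,\Phi(f)|(z^{\ast})\ge\delta_3/(2CR)-\const\cdot C_4/D^3\ge\delta_3/(4CR)>0$. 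Either way $\norm{d\,\Phi(f)}_{L^\infty(D_{R_1}(a))}$ is bounded below by a positive constant depending only on $\lambda$ and $N$, hence $\ge1/R_1$ for our choice of $R_1$.

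For condition~(2) I would iterate~(\ref{eq: distance between f and g near the center}) and~(\ref{eq: distance between f and g far from the center}) pointwise, using $D\ge20$. Fix $z$: at most one centre $p_m$ lies in $D_1(z)$, and then (as $D>10$) no other centre lies in $D_9(z)$, so $|w-p_l|>6>1$ for all $w\in D_3(z)$ and $l\ne m$. For the left inequality, telescope forward: each step $j\ne m$ multiplies $d(g_1^{(j)}(z),g_2^{(j)}(z))$ by $\le1+C_4|z-p_j|^{-3}$ (by~(\ref{eq: distance between f and g far from the center}), since $|z-p_j|\ge1$), and step $m$ by $\le C_4$ (left half of~(\ref{eq: distance between f and g near the center})); as $\prod_j(1+C_4|z-p_j|^{-3})\lesssim1$ by $D$‑separation, we get $d(\Phi(g_1)(z),\Phi(g_2)(z))\lesssim d(g_1(z),g_2(z))$. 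For the right inequality, telescope backward using the reverse of~(\ref{eq: distance between f and g far from the center}) at the steps $j\ne m$ and the second half of~(\ref{eq: distance between f and g near the center}) at step $m$: since $|w-p_m|\le2$ implies $w\in D_3(z)$ and $|w-p_l|>1$ there, the remaining steps convert $g_i^{(m)}(w)$ into $\Phi(g_i)(w)$ at the cost of another bounded product, so $d(g_1(z),g_2(z))\lesssim\sup_{|w-z|\le3}d(\Phi(g_1)(w),\Phi(g_2)(w))$. (If no centre lies in $D_1(z)$, only~(\ref{eq: distance between f and g far from the center}) is needed.) Together these are condition~(2).

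The part I expect to be the real obstacle is quantitative, not structural: the number $k$ of blow‑ups is of order $\area(\Lambda)$, so \emph{every} error made by one blow‑up must be dominated by a quantity summable over a $D$‑separated planar set. This is exactly why Lemma~\ref{lemma: one blow up} carries the decay exponents $3$ and $4$ and why the centres are chosen $D$‑separated with $D=D(\lambda)$ large: the tails $\sum_j C_4|z-p_j|^{-3}$ become as small as we wish, which \emph{simultaneously} keeps $\Phi(g)$ inside $\moduli_\lambda(\affine P^N)$, prevents the first $j-1$ blow‑ups from breaking the hypothesis of the $j$‑th, and keeps the nondegeneracy produced at each centre from being cancelled by the others. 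Establishing these summability bounds uniformly in $f$ and $\Lambda$ is where the bulk of the work lies; conceptually the argument is just ``apply Lemma~\ref{lemma: one blow up} once per degenerate region of $f$''.
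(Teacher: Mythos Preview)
Your proposal is correct and follows essentially the same route as the paper: choose a sparse family of centres where $f$ is degenerate, apply the one--blow--up Lemma~\ref{lemma: one blow up} inductively at those centres, and control all accumulated errors by the summability of $\sum_j|z-p_j|^{-3}$ over a well--separated planar set. The only differences are organizational: you introduce two separate scales $R$ (blow--up radius) and $D$ (separation) where the paper uses a single $R$ with separation $2R$; you select a maximal $D$--separated subset of the \emph{degenerate locus} $\Lambda_{\mathrm{deg}}\subset D_{R_1/2}(\Lambda)$, whereas the paper takes a maximal $2R$--separated subset of $\Lambda$ itself and then classifies each point as ``good'' or ``bad'', blowing up only at the bad ones. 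Both schemes produce the same qualitative picture and the verifications of (1) and (2) are the same telescoping arguments based on (\ref{eq: difference between df and df hat}), (\ref{eq: distance between f and g near the center}), and (\ref{eq: distance between f and g far from the center}).
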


\begin{proof}
We can assume $1<\lambda<2$.
Let $R=R(\lambda)>R_2$ ($R_2$ is the constant introduced in Lemma \ref{lemma: one blow up}) be a large positive number. 
We choose $R$ so large that the conditions (\ref{eq: error far from centers 1}), 
(\ref{eq: error far from centers 2}) and (\ref{eq: error far from centers 3}) below are satisfied.
We will choose $R_1$ sufficiently larger than $R$ later.
Let $\{p_1,\dots,p_n\}$ be a maximal subset of $\Lambda$ satisfying $|p_i-p_j|>2R$ $(i\neq j)$.
Then $\Lambda$ is contained in the union of the disks $D_{2R}(p_i)$.
Since $R\gg 1$, we can assume that for all $z\in \affine$
\begin{align}
    &\sum_{p_i:|z-p_i|>R}\frac{C_4}{|z-p_i|^3} < \frac{\delta_3}{2},  \label{eq: error far from centers 1}\\
    &\left(1+\sum_{p_i: |z-p_i|>R/2}\frac{C_4}{|z-p_i|^4}\right)
    \prod_{p_i: |z-p_i|>R/2}\left(1+\frac{C_4}{|z-p_i|^3}\right) 
    < \lambda <2, \label{eq: error far from centers 2} 
\end{align}
\begin{equation} \label{eq: error far from centers 3}
  \begin{split}
   \min\left(\frac{\delta_3}{4R\sqrt{\pi}},\frac{1}{100}\right)&\prod_{p_i:|p_i-z|>R}\left(1-\frac{C_4}{|z-p_j|^3}\right) \\
   &-\left(\sum_{p_i: |z-p_i|>R}\frac{C_4}{|z-p_i|^4}\right)
    \prod_{p_i: |z-p_i|>R}\left(1+\frac{C_4}{|z-p_i|^3}\right) > \frac{\delta_3}{10R}.
  \end{split}
\end{equation}
Here $0<\delta_3<1$ and $C_4>1$ are the constants introduced in Lemma \ref{lemma: one blow up}.

We denote by $X$ the set of $g\in \moduli(\affine P^N)$ satisfying $\mathbf{d}_{D_{R(\Lambda)}}(f,g)\leq \delta_3/4$.
We call the point $p_i$ good if $\norm{df}_{L^\infty(D_R(p_i))}\geq \delta_3/(4R\sqrt{\pi})$.
Otherwise we call it bad.
If $p_i$ is bad, then $\norm{df}_{L^\infty(D_R(p_i))} < \delta_3/(4R\sqrt{\pi})$ and hence 
$f(D_R(p_i))\subset B_{\delta_3/4}(f(p_i))$.
This implies 
\begin{equation} \label{eq: behavior around bad points}
   \forall g\in X: \quad g(D_R(p_i))\subset B_{\delta_3/2}(f(p_i)) \quad \text{for bad $p_i$}.
\end{equation}
We will blow up each curve $g\in X$ around all bad points $p_i$ by using Lemma \ref{lemma: one blow up}
repeatedly.

We inductively construct maps $\Phi_i:X\to \mathcal{H}(\affine P^N)$ for $0\leq i\leq n$ satisfying 
\begin{equation} \label{eq: induction hypothesis}
  d(g(z),\Phi_i(g)(z)) \leq \sum_{j=1}^i \frac{C_4}{|z-p_j|^3} \quad (\forall g\in X, z\in \affine).
\end{equation}
We define $\Phi_0$ as the identity map.
Suppose we have already constructed $\Phi_i$. 
If $p_{i+1}$ is good, then we set $\Phi_{i+1} := \Phi_i$.
If $p_{i+1}$ is bad, then we proceed as follows.
For $g\in X$ and $z\in D_R(p_{i+1})$
\[ d(g(z),\Phi_i(g)(z))\leq \sum_{j=1}^i \frac{C_4}{|z-p_j|^3} < \frac{\delta_3}{2} \quad 
   \text{by (\ref{eq: error far from centers 1}) and (\ref{eq: induction hypothesis})}.\]
Then by (\ref{eq: behavior around bad points}) the image of $D_R(p_{i+1})$ under the map $\Phi_i(g)$ is 
contained in $B_{\delta_3}(f(p_{i+1}))$.
From Lemma \ref{lemma: one blow up} we can construct a map 
\[ \Psi: \{h\in \mathcal{H}(\affine P^N)|\, h(D_R(p_{i+1}))\subset B_{\delta_3}(f(p_{i+1}))\}\to \mathcal{H}(\affine P^N),\quad 
   h \mapsto \hat{h}, \]
satisfying the conditions (1) and (2) of Lemma \ref{lemma: one blow up}.
Then we set $\Phi_{i+1} := \Psi\circ \Phi_i$.
This satisfies the following. Let $g\in X$.
\begin{itemize}
  \item $d(\Phi_i(g)(z),\Phi_{i+1}(g)(z)) \leq C_4/|z-p_{i+1}|^3$. 
  (Then $\Phi_{i+1}$ satisfies the condition (\ref{eq: induction hypothesis}). So we can continue the induction process.)
  \item $1/100 < \norm{d\Phi_{i+1}(g)}_{L^\infty(D_{R/2}(p_{i+1}))} < 1$.
  \item For $|z-p_{i+1}|\geq 1$
  \begin{equation}  \label{eq: difference between spherical derivatives under induction}
   \bigl| |d\Phi_{i+1}(g)|-|d\Phi_i(g)|\bigr| \leq \frac{C_4}{|z-p_{i+1}|^3}|d\Phi_i(g)| + \frac{C_4}{|z-p_{i+1}|^4}.
  \end{equation}
  \item Let $g_1,g_2\in X$. For $|z-p_{i+1}|\leq 1$
  \begin{equation}  \label{eq: distance between i and i+1 near centers}
      C_4^{-1}d(\Phi_{i+1}(g_1),\Phi_{i+1}(g_2)) \leq  d(\Phi_i(g_1),\Phi_i(g_2))
      \leq C_4 \sup_{|w-p_{i+1}|\leq 2} d(\Phi_{i+1}(g_1)(w),\Phi_{i+1}(g_2)(w)).
  \end{equation}
  For $|z-p_{i+1}|\geq 1$
  \begin{equation} \label{eq: distance between i and i+1 far from centers}
    \begin{split}
    d(\Phi_{i+1}(g_1),\Phi_{i+1}(g_2)) \leq \left(1+\frac{C_4}{|z-p_{i+1}|^3}\right)d(\Phi_i(g_1),\Phi_i(g_2)), \\
    d(\Phi_i(g_1),\Phi_i(g_2)) \leq \left(1+\frac{C_4}{|z-p_{i+1}|^3}\right)d(\Phi_{i+1}(g_1),\Phi_{i+1}(g_2)).
    \end{split}
  \end{equation}  
\end{itemize}

We set $\Phi := \Phi_n:X\to \mathcal{H}(\affine P^N)$ and 
$R_1 := 10R/\delta_3$.
We will check that they satisfy the required properties.
Take $g\in X$. For any $z\in \affine$, by (\ref{eq: error far from centers 2})
\[ |d\Phi(g)|(z)\leq \left(1+\sum_{p_i: |z-p_i|>R/2}\frac{C_4}{|z-p_i|^4}\right)
    \prod_{p_i: |z-p_i|>R/2}\left(1+\frac{C_4}{|z-p_i|^3}\right) <\lambda .\]
Here we have used (\ref{eq: difference between spherical derivatives under induction}) and
$\norm{d\Phi_i(g)}_{L^\infty(D_{R/2}(p_i))}<1$ for bad $p_i$.
Therefore the image of $\Phi$ is contained in $\moduli_\lambda(\affine P^N)$.

For each $p_i$ we can find $z\in D_R(p_i)$ satisfying 
$|df|(z)\geq \delta_3/(4R\sqrt{\pi})$ (if $p_i$ is good) or $|d\Phi_i(f)|(z)>1/100$ (if $p_i$ is bad).
Then by (\ref{eq: error far from centers 3}) 
\begin{equation*}
   \begin{split}
   |d\Phi(f)|(z) \geq 
   &\min\left(\frac{\delta_3}{4R\sqrt{\pi}},\frac{1}{100}\right)\prod_{p_j:|p_j-z|>R}\left(1-\frac{C_4}{|z-p_j|^3}\right) \\
   &-\left(\sum_{p_j: |z-p_j|>R}\frac{C_4}{|z-p_j|^4}\right)
    \prod_{p_j: |z-p_j|>R}\left(1+\frac{C_4}{|z-p_j|^3}\right) > \frac{\delta_3}{10R}.
   \end{split}
\end{equation*}
This shows that $\Phi(f)$ is $R_1$-nondegenerate over $\Lambda$ because $R_1=10R/\delta_3>10R$
and $\Lambda$ is contained in the union of $D_{2R}(p_i)$.

Let $g_1,g_2\in X$.
For any $z\in \affine$, by (\ref{eq: distance between i and i+1 near centers}) and 
(\ref{eq: distance between i and i+1 far from centers})
\begin{equation*}
   \begin{split}
   d(\Phi(g_1)(z),\Phi(g_2)(z)) &\leq C_4\prod_{p_i:|z-p_i|>1}\left(1+\frac{C_4}{|z-p_i|^3}\right)
   d(g_1(z),g_2(z))  \\
   &\leq 2C_4(1+C_4)d(g_1(z),g_2(z)), \quad (\text{by (\ref{eq: error far from centers 2})}),
   \end{split}
\end{equation*}
\begin{equation*}
   \begin{split}
   d(g_1(z),g_2(z)) &\leq \sup_{w:|w-z|\leq 3} 
   \left\{C_4\prod_{p_i:|w-p_i|>1}\left(1+\frac{C_4}{|w-p_i|^3}\right)d(\Phi(g_1)(w),\Phi(g_2)(w))\right\} \\
   &\leq 2C_4(1+C_4) \sup_{w:|w-z|\leq 3} d(\Phi(g_1)(w),\Phi(g_2)(w)) \quad 
   (\text{by (\ref{eq: error far from centers 2})}).
   \end{split}
\end{equation*}
We have finished the proof.
\end{proof}

\section{Quantitative study of nondegenerate curves} \label{section: quantitative study of nondegenerate curves}

In this section we study a neighborhood of a nondegenerate curve by a deformation technique and 
prove Proposition \ref{prop: local study around nondegenerate curve}.
Deformation theory itself is a well-established subject, but here we encounter two unorthodox issues:
\begin{itemize}
  \item We need thorough quantitative descriptions.
  \item The complex plane $\affine$ is non-compact. So we need to ``project'' the problem to a compact setting.
\end{itemize}
Subsection \ref{subsection: analysis of the operator (-Delta+1)} is an analytic preparation 
for the first issue.
The second issue is dealt with in Subsection \ref{subsection: constructing holomorphic sections over the torus}.
We prove Proposition \ref{prop: local study around nondegenerate curve} in 
Subsection \ref{subsection: proof of Proposition local study around nondegenerate curve}.

\subsection{Analysis of the operator $(-\Delta+1)$}  \label{subsection: analysis of the operator (-Delta+1)}

In this subsection we prepare some facts relating to the operator 
$-\Delta+1=-\partial^2/\partial x^2 - \partial^2/\partial y^2 + 1$.
The following lemma was proved in Matsuo--Tsukamoto \cite[Sublemma 4.4]{Matsuo--Tsukamoto Brody curves}
\begin{lemma} \label{lemma: L^infty estimate for functions}
Let $\varphi:\affine \to \mathbb{R}$ be a $C^2$ function 
such that $\varphi$ and $\Delta \varphi$ are bounded. Then 
\[ \norm{\varphi}_{L^\infty(\affine)} \leq 4\norm{(-\Delta+1)\varphi}_{L^\infty(\affine)}.\]
\end{lemma}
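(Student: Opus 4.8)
\emph{Plan.} The estimate is pointwise in nature: it suffices to bound $|\varphi(a)|$ for an arbitrary point $a\in\affine$ by $4\norm{(-\Delta+1)\varphi}_{L^\infty(\affine)}$, and after a translation we may assume $a=0$. Put $A:=\norm{(-\Delta+1)\varphi}_{L^\infty(\affine)}$, which is finite since $\varphi$ and $\Delta\varphi$ are bounded, and $M:=\norm{\varphi}_{L^\infty(\affine)}$. Replacing $\varphi$ by $-\varphi$ if necessary, it is enough to show $\varphi(0)\le A$ (which is in fact stronger than the asserted bound $\le 4A$). If $M\le A$ there is nothing to prove, so assume $M>A$. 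The idea is a barrier/comparison argument for the operator $-\Delta+1$ on large disks $D_R=\{|z|\le R\}$, exploiting that the ``$+1$'' makes $-\Delta+1$ satisfy the maximum principle, and then letting $R\to\infty$ to absorb the non-compactness of $\affine$.

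\emph{The barrier.} The crux is to exhibit, on all of $\affine$, a positive radial function $v$ with $(-\Delta+1)v\ge 0$ that is unbounded. Given such a $v$, set $w_R:=v/v(R)$ on $D_R$; then $(-\Delta+1)w_R\ge 0$, $0<w_R\le 1$, $w_R\equiv 1$ on the circle $\{|z|=R\}$, and $w_R(0)=v(0)/v(R)\to 0$ as $R\to\infty$. A convenient explicit choice is $v(r):=(1+r^2)^{1/4}$ with $r=|z|$: using $\Delta f=f''+f'/r$ for radial $f$, a direct computation gives $\Delta v=(1+r^2)^{-7/4}\bigl(1+\tfrac14 r^2\bigr)$, so $\Delta v\le v$ reduces to $1+\tfrac14 r^2\le(1+r^2)^2$, which is obvious. (Alternatively one may take $v=I_0(r)$, the modified Bessel function, for which $(-\Delta+1)v=0$ identically; this is essentially the route of \cite[Sublemma 4.4]{Matsuo--Tsukamoto Brody curves}.)

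\emph{Comparison and conclusion.} On $\overline{D_R}$ define $\beta:=A+(M-A)w_R$. Since $A$ and $M-A$ are constants, $(-\Delta+1)\beta=A+(M-A)(-\Delta+1)w_R\ge A$, using $M>A$ and $(-\Delta+1)w_R\ge 0$. Hence $g:=\beta-\varphi$ satisfies $(-\Delta+1)g\ge A-(-\Delta+1)\varphi\ge 0$ on $D_R$, and on $\{|z|=R\}$ we have $g=M-\varphi\ge 0$. The maximum principle for $-\Delta+1$ now applies: at an interior minimum $p$ of $g$ one has $\Delta g(p)\ge 0$, so $g(p)\ge\Delta g(p)\ge 0$; if instead the minimum is on the boundary it is already $\ge 0$. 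Thus $g\ge 0$ on $\overline{D_R}$, and in particular $\varphi(0)\le\beta(0)=A+(M-A)w_R(0)$. Letting $R\to\infty$ and using $w_R(0)\to 0$ gives $\varphi(0)\le A$, which completes the proof (with room to spare).

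\emph{Main obstacle.} The only genuinely delicate points are: (i) producing an \emph{explicit and elementary} unbounded supersolution of $(-\Delta+1)v\ge 0$ — the naive candidates $r$, $r^2$, $\cosh r$ all fail, because in two dimensions the radial Laplacian carries the extra $v'/r$ term, and this is precisely why the exponent $\tfrac14$ in $(1+r^2)^{1/4}$ is forced; and (ii) the limit $R\to\infty$, where one must verify that the $M$-dependent part of the barrier evaluated at the origin decays, i.e. that $v(0)/v(R)\to 0$. Everything else is the routine maximum principle on a bounded domain.
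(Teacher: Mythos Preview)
Your argument is correct and in fact yields the sharp constant $1$ in place of $4$; the paper does not supply its own proof but simply cites \cite[Sublemma 4.4]{Matsuo--Tsukamoto Brody curves}. Your barrier/comparison approach is essentially the same maximum-principle idea as in that reference (which, as you note, uses the exact solution $I_0(r)$ of $(-\Delta+1)v=0$ as the barrier), the difference being your elementary explicit supersolution $v=(1+r^2)^{1/4}$. This buys a self-contained proof avoiding special functions, at the minor cost that $v$ grows only like $r^{1/2}$ rather than exponentially, which is irrelevant here since you only need $v(R)\to\infty$.
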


Let $\psi$ be a real valued function in $\affine$. 
We define the $C^k$ norm $\norm{\psi}_{C^k}$ as $\sum_{i=0}^k \norm{\nabla^i \psi}_{L^\infty(\affine)}$.
For $R>0$ and a subset $\Lambda\subset \affine$ the function $\psi$ is said to 
be $R$-nondegenerate over $\Lambda$ if 
\[ \forall a\in \Lambda: \quad \sup_{D_R(a)}\psi \geq 1/R^2.\]

\begin{lemma} \label{lemma: nondegeneracy implies positivity}
For any positive numbers $K$ and $R$ there exists $\kappa = \kappa(K,R)>0$ satisfying the following statement.
Let $\varphi$ be a real valued bounded $C^2$ function in $\affine$.
Suppose $\psi := (-\Delta+1)\varphi$ is a $C^1$ function and satisfies 
\[ \psi\geq -\kappa, \quad \norm{\psi}_{C^1}\leq K, \quad \text{$\psi$ is $R$-nondegenerate all over the plane}.\]
Then $\inf_{z\in \affine} \varphi(z)\geq \kappa$.
\end{lemma}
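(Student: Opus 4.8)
The plan is to combine the global $L^\infty$ control from Lemma \ref{lemma: L^infty estimate for functions} with a local comparison argument that converts the $R$-nondegeneracy of $\psi$ into a genuine positive lower bound for $\varphi$. First I would record the rough a priori bound: since $\norm{\psi}_{C^1}\leq K$, Lemma \ref{lemma: L^infty estimate for functions} gives $\norm{\varphi}_{L^\infty(\affine)}\leq 4K$, so $\varphi$ is bounded and, by elliptic regularity for $-\Delta+1$ applied to $\psi\in C^1$, $\varphi$ is $C^3$ with $\norm{\varphi}_{C^{2}}$ bounded in terms of $K$. This uniform $C^2$ bound on $\varphi$ (hence a uniform bound on the oscillation of $\varphi$ at scale $R$) is what will let a pointwise near-positivity of $\psi$ propagate.

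Next I would run the main dichotomy. Write $\varphi=\varphi_+-\varphi_-$ is not the right split; instead decompose $\psi=\psi_+ - \psi_-$ where $\psi_-=\max(-\psi,0)\in[0,\kappa]$ by hypothesis. Let $u$ solve $(-\Delta+1)u=\psi_-$ on $\affine$; by Lemma \ref{lemma: L^infty estimate for functions}, $\norm{u}_{L^\infty}\leq 4\kappa$, and $u\geq 0$ by the maximum principle (the Green's function of $-\Delta+1$ on $\affine$ is positive). Then $\varphi+u$ solves $(-\Delta+1)(\varphi+u)=\psi+\psi_-=\psi_+\geq 0$, so $w:=\varphi+u$ is superharmonic-type for $-\Delta+1$ with nonnegative right-hand side, hence $w\geq 0$ everywhere by the maximum principle; therefore $\varphi\geq -u\geq -4\kappa$. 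That already gives a lower bound, but not a \emph{positive} one, so the nondegeneracy must now be used to upgrade $\geq -4\kappa$ to $\geq\kappa$.

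For the upgrade I would argue by contradiction and compactness/rescaling. Suppose the statement fails: there is a sequence $\varphi_n$ with $\psi_n=(-\Delta+1)\varphi_n\geq -1/n$, $\norm{\psi_n}_{C^1}\leq K$, each $\psi_n$ $R$-nondegenerate over $\affine$, yet $\inf\varphi_n\to 0$ from above or $\inf\varphi_n\le 0$ — more precisely $\inf_{\affine}\varphi_n< \kappa_n$ for $\kappa_n\to 0$, say $\varphi_n(z_n)\to 0$ (translating so $z_n=0$ and using the $C^2$ bound to pass, via Arzelà--Ascoli, to a locally uniform limit $\varphi_\infty$ with $(-\Delta+1)\varphi_\infty=\psi_\infty\geq 0$, $\varphi_\infty\geq 0$, $\varphi_\infty(0)=0$, and $\psi_\infty$ still $R$-nondegenerate over some point within $D_R(0)$ by the $C^1$ bound passing the supremum bound $\sup_{D_R(0)}\psi_n\ge 1/R^2$ to the limit). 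But $\varphi_\infty\geq 0$ with $\varphi_\infty(0)=0$ forces, at the interior minimum $0$, $\Delta\varphi_\infty(0)\ge 0$, hence $\psi_\infty(0)=(-\Delta+1)\varphi_\infty(0)=-\Delta\varphi_\infty(0)\le 0$, so $\psi_\infty(0)=0$; and more strongly, writing $\varphi_\infty\ge 0$ as a solution of $(-\Delta+1)\varphi_\infty=\psi_\infty\ge0$, the Green's representation $\varphi_\infty = G*\psi_\infty$ with $G>0$ shows $\varphi_\infty\equiv 0$ only if $\psi_\infty\equiv 0$, contradicting $\sup_{D_R(0)}\psi_\infty\ge 1/R^2>0$. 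Quantifying this: from the $C^1$ bound on $\psi_\infty$ and $\sup_{D_R(0)}\psi_\infty\ge 1/R^2$, there is a ball $D_r(a)\subset D_{R}(0)$ with $r=r(K,R)$ on which $\psi_\infty\ge \tfrac{1}{2R^2}$; then $\varphi_\infty(0)=\int_{\affine}G(-w)\psi_\infty(w)\,dxdy\ge \tfrac{1}{2R^2}\int_{D_r(a)}G(-w)\,dxdy =: c(K,R)>0$, contradicting $\varphi_\infty(0)=0$. This contradiction yields the existence of $\kappa(K,R)>0$; tracking the constants through the Green's function estimate makes the argument effective rather than merely compactness-based, which is the version I would ultimately write.

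The main obstacle I anticipate is the quantitative passage from ``$\psi$ is $R$-nondegenerate and $\norm{\psi}_{C^1}\le K$'' to ``$\varphi\ge\kappa$'' with an \emph{explicit} dependence $\kappa=\kappa(K,R)$: one must lower-bound $\inf_\affine\varphi$ uniformly, and the delicate point is that the bad set where $\psi<0$ could a priori be spread out, so the contribution $-u=-G*\psi_-$ with $\norm{\psi_-}_\infty\le\kappa$ must be beaten by the good contribution $G*\psi_+\ge G*(\text{bump of height }\tfrac{1}{2R^2}\text{ on a fixed-size ball near every point})$. Making the ``near every point'' uniform — i.e. getting a fixed-radius ball of definite $\psi$-positivity within every $D_R(a)$, with radius depending only on $K$ and $R$ via the gradient bound — and then using the pointwise lower bound $G(z)\gtrsim e^{-|z|}$ (or the explicit modified-Bessel form of the $-\Delta+1$ Green's function on $\mathbb{R}^2$) to produce the clean inequality $\varphi\ge c(K,R)-4\kappa\ge\kappa$ for $\kappa$ small, is where the real work lies; everything else is the maximum principle and the already-cited Lemma \ref{lemma: L^infty estimate for functions}.
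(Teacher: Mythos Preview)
Your contradiction/compactness core is exactly the paper's route: assume failure for $\kappa=1/n$, translate the near-infimum to the origin, use the $L^\infty$ bound (Lemma \ref{lemma: L^infty estimate for functions}) plus Schauder to get $C^{2,\alpha}$ compactness, extract a limit with $(-\Delta+1)\varphi_\infty=\psi_\infty\ge 0$, $\psi_\infty$ still $R$-nondegenerate, and $\varphi_\infty$ attaining a nonpositive minimum at $0$, then derive a contradiction. The difference is only in the endgame and in how much you carry along the way. The paper skips your $\psi=\psi_+-\psi_-$ decomposition entirely and finishes in one line with the strong minimum principle: a bounded $\varphi_\infty$ with $(-\Delta+1)\varphi_\infty\ge 0$ that attains a nonpositive interior minimum must be a nonpositive constant, whence $\psi_\infty=\varphi_\infty\le 0$ contradicts $R$-nondegeneracy. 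Your version instead first proves $\varphi\ge -4\kappa$ via $u=G*\psi_-$ (so that $\varphi_\infty\ge 0$ in the limit), and then uses the Green's representation $\varphi_\infty=G*\psi_\infty$ with $G>0$ to contradict $\varphi_\infty(0)=0$. Both are correct; the paper's is shorter because the strong minimum principle already encodes the positivity of $G$ and makes your preliminary decomposition unnecessary. Your sketched \emph{effective} argument (a ball of radius $r(K,R)$ on which $\psi\ge 1/(2R^2)$ from the $C^1$ bound, then $\varphi\ge c(K,R)-4\kappa$ via the explicit $K_0$-Green's function) is a genuine addition: the paper's proof is purely compactness-based and gives no formula for $\kappa(K,R)$, so if you actually carry that through you get more than the paper claims.
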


\begin{proof}
Suppose the statement is false.
Then for any $n\geq 1$ there exists a bounded $C^2$ function $\varphi_n$ in $\affine$ such that 
$\inf \varphi_n < 1/n$ and 
$\psi_n := (-\Delta+1)\varphi_n$ satisfies
\[ \psi_n \geq -1/n, \quad \norm{\psi_n}_{C^1}\leq K, \quad \text{$\psi_n$ is $R$-nondegenerate over $\affine$}.\]
By translation, we can assume $\varphi_n(0) < \inf \varphi_n + 1/n < 2/n$.
From Lemma \ref{lemma: L^infty estimate for functions}, 
the Schauder estimate \cite[Theorem 6.2]{Gilbarg--Trudinger} 
and $\norm{\psi_n}_{C^1}\leq K$, the functions $\varphi_n$ are bounded in 
$C^{2,\alpha}$ $(0<\alpha<1)$ over every compact subset of $\affine$.
By the Arzela--Ascoli theorem we can choose a sequence $n_1<n_2<n_3<\dots$ such that 
$\varphi_{n_k}$ and $\psi_{n_k}$ converge in $C^2$ and $C^0$ respectively 
over every compact subset of $\affine$.
We denote their limits by $\varphi$ and $\psi$.
They satisfy 
\[ \varphi(0) = \inf \varphi \leq 0, \quad 
   (-\Delta+1)\varphi=\psi, \quad \psi\geq 0, \quad \text{$\psi$ is $R$-nondegenerate over $\affine$}. \]
$\varphi$ achieves the non-positive minimum at the origin.
The condition $(-\Delta+1)\varphi\geq 0$ and the strong minimum principle \cite[Theorem 3.5]{Gilbarg--Trudinger} imply 
that $\varphi$ must be a non-positive constant.
But then $\psi=(-\Delta+1)\varphi=\varphi$ cannot be $R$-nondegenerate over $\affine$.
\end{proof}

Let $\Gamma\subset \affine$ be a lattice. We study the operator $-\Delta+1$ over the torus $\affine/\Gamma$.
It is very important that all the estimates in the lemmas below do not depend on $\Gamma$.
Indeed we can establish more general statements over the universal cover $\affine$.
But we don't need them.

\begin{lemma} \label{lemma: solving the Helmholtz equation}
Let $K$ and $R$ be positive numbers.
Let $\psi$ be a real valued $C^1$ function over the torus $\affine/\Gamma$ satisfying 
\[ \psi \geq -\kappa(K,R), \quad \norm{\psi}_{C^1}\leq K, \quad \text{$\psi$ is $R$-nondegenerate over $\affine/\Gamma$}.\]
(The last condition means that the pull-back of $\psi$ to the plane is $R$-nondegenerate over $\affine$.)
Then there exists a $C^2$ function $\varphi$ over the torus satisfying 
\[ (-\Delta+1)\varphi = \psi,\quad \norm{\varphi}_{C^2}\lesssim_K 1, \quad \varphi \geq \kappa(K,R).\]
\end{lemma}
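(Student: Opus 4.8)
The plan is to treat the three assertions separately: existence of the solution, the $C^2$ bound, and the lower bound on $\varphi$. For existence I would work directly on the compact torus $\affine/\Gamma$, where $-\Delta+1$ is invertible because the bilinear form $a(u,v)=\int_{\affine/\Gamma}(\nabla u\cdot\nabla v+uv)$ is coercive on $H^1(\affine/\Gamma)$; by Lax--Milgram (equivalently, by Fourier series on $\affine/\Gamma$) there is a unique weak solution $\varphi\in H^1(\affine/\Gamma)$ of $(-\Delta+1)\varphi=\psi$. Since $\psi$ is $C^1$ with $\|\psi\|_{C^1}\leq K$, it is Lipschitz, hence $C^{0,\alpha}$ for every $0<\alpha<1$, so elliptic regularity upgrades $\varphi$ to a classical $C^{2,\alpha}$ solution.

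For the $C^2$ estimate I would first control $\|\varphi\|_{C^0}$: pulling $\varphi$ back to $\affine$, it is a bounded $C^2$ function with $\Delta\varphi=\varphi-\psi$ bounded, so Lemma \ref{lemma: L^infty estimate for functions} gives $\|\varphi\|_{L^\infty(\affine)}\leq 4\|\psi\|_{L^\infty(\affine)}\leq 4K$. Then I would apply the interior Schauder estimate \cite[Theorem 6.2]{Gilbarg--Trudinger} to $\varphi$ on the universal cover $\affine$, using balls of a fixed radius (say $1$), to obtain $\|\varphi\|_{C^{2,\alpha}(\affine)}\lesssim\|\varphi\|_{C^0(\affine)}+\|\psi\|_{C^{0,\alpha}(\affine)}\lesssim_K 1$, and in particular $\|\varphi\|_{C^2}\lesssim_K 1$. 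The one point I would be most careful about is the uniformity in $\Gamma$: by running the Schauder estimate at a fixed scale on $\affine$ rather than on the torus itself, its constant never sees the (possibly very thin) geometry of $\Gamma$, so the resulting bound is genuinely independent of the lattice. This is really the only subtle step; everything else reduces to citing the two preceding lemmas.

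Finally, positivity is an immediate application of Lemma \ref{lemma: nondegeneracy implies positivity} to the pullback of $\varphi$ to $\affine$: this pullback is a bounded $C^2$ function, $\psi=(-\Delta+1)\varphi$ is $C^1$ with $\psi\geq-\kappa(K,R)$, $\|\psi\|_{C^1}\leq K$, and $\psi$ is $R$-nondegenerate all over the plane by hypothesis, so $\inf_{\affine}\varphi\geq\kappa(K,R)$, i.e.\ $\varphi\geq\kappa(K,R)$ on the torus. Note the constants match by design: the $\kappa(K,R)$ appearing in the hypothesis $\psi\geq-\kappa(K,R)$ is exactly the one produced by Lemma \ref{lemma: nondegeneracy implies positivity}, so no compatibility issue arises.
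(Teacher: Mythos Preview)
Your proof is correct and follows essentially the same route as the paper: existence via the Riesz/Lax--Milgram argument on $H^1(\affine/\Gamma)$, then the $C^0$ bound from Lemma \ref{lemma: L^infty estimate for functions} combined with the Schauder estimate for the $C^2$ control, and finally Lemma \ref{lemma: nondegeneracy implies positivity} for the lower bound. The only cosmetic difference is that the paper bootstraps regularity via the $L^p$ estimate and Sobolev embedding rather than Schauder, and your explicit remark about running the Schauder estimate at fixed scale on $\affine$ to secure $\Gamma$-uniformity is a point the paper leaves implicit.
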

\begin{proof}
If we find a $C^2$ function $\varphi$ in $\affine/\Gamma$ satisfying 
$(-\Delta+1)\varphi=\psi$, then it satisfies 
$\norm{\varphi}_{C^2}\lesssim_K 1$ (by Lemma \ref{lemma: L^infty estimate for functions} and 
the Schauder estimate) and $\varphi\geq \kappa$ (by Lemma \ref{lemma: nondegeneracy implies positivity}).
So the remaining problem is only to solve the equation, and this can be done by a standard $L^2$ technique.
Consider a bounded linear map:
\[ L^2_1(\affine/\Gamma)\to \mathbb{R}, \quad \phi \mapsto (\psi,\phi)_{L^2}.\]
By the Riesz representation theorem, we can find $L^2_1$ function $\varphi$ satisfying 
$(\nabla\varphi, \nabla\phi)_{L^2} + (\varphi, \phi)_{L^2} = (\psi, \phi)_{L^2}$ for all 
$\phi\in L^2_1$. Then $\varphi$ is a distributional solution of $(-\Delta+1)\varphi=\psi$.
By the $L^p$ estimate \cite[Theorem 9.11]{Gilbarg--Trudinger}, 
$\varphi$ is in $L^p_3$ for any $1<p<\infty$.
By the Sobolev embedding \cite[Corollary 7.11]{Gilbarg--Trudinger}, $\varphi$ is in $C^2$.
\end{proof}

So far we have considered the operator $-\Delta+1$ acting on functions.
We will also need the case of a vector bundle coefficient.
Let $\mathcal{E}$ be a holomorphic vector bundle over the torus $\affine/\Gamma$
with a Hermitian metric.
We denote its canonical connection by $\nabla$ and define the curvature 
$\Theta :=[\nabla_{\partial/\partial z}, \nabla_{\partial/\partial \bar{z}}]$.
We assume a positivity of $\Theta$; there exists a positive number $\sigma$ satisfying 
\[ \forall u\in \mathcal{E}: \> \langle \Theta u, u\rangle \geq \sigma |u|^2.\]
Let $\bar{\partial}:\Omega^0(\mathcal{E})\to \Omega^{0,1}(\mathcal{E})$ be the Dolbeault operator, 
and $\bar{\partial}^*:\Omega^{0,1}(\mathcal{E})\to \Omega^0(\mathcal{E})$ its formal adjoint.
The operator $\bar{\partial}\bar{\partial}^*$ is an analogue of $-\Delta+1$ because of
the Bochner formula
\begin{equation} \label{eq: Bochner formula}
  \bar{\partial}\bar{\partial}^* a = \frac{1}{2}\nabla^* \nabla a + \Theta a, \quad (a\in \Omega^{0,1}(\mathcal{E})).
\end{equation}
The next lemma is an analogue of Lemma \ref{lemma: L^infty estimate for functions} and proved 
in \cite[Proposition 4.2]{Tsukamoto deformation}.

\begin{lemma} \label{lemma: L^infty estimate}
Let $a\in \Omega^{0,1}(\mathcal{E})$ be in $C^2$.
Then 
\[ \norm{a}_{L^\infty(\affine/\Gamma)} \leq \frac{8}{\sigma} \norm{\bar{\partial}\bar{\partial}^* a}_{L^\infty(\affine/\Gamma)}.\]
\end{lemma}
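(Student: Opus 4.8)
The statement to be proved is Lemma \ref{lemma: L^infty estimate}: for $a \in \Omega^{0,1}(\mathcal{E})$ of class $C^2$ over the torus $\affine/\Gamma$, one has $\norm{a}_{L^\infty(\affine/\Gamma)} \leq (8/\sigma)\norm{\bar{\partial}\bar{\partial}^* a}_{L^\infty(\affine/\Gamma)}$. The strategy mirrors the proof of Lemma \ref{lemma: L^infty estimate for functions} (the scalar case), using the Bochner formula (\ref{eq: Bochner formula}) to convert the problem about the first-order operator $\bar{\partial}\bar{\partial}^*$ into one about the Bochner Laplacian $\tfrac12\nabla^*\nabla + \Theta$, and then exploiting the curvature positivity $\langle \Theta u, u\rangle \geq \sigma|u|^2$ together with the Kato inequality and a maximum-principle argument applied to the scalar function $|a|$.

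**Key steps.** First I would set $\beta := \bar{\partial}\bar{\partial}^* a$ and invoke (\ref{eq: Bochner formula}) to write $\tfrac12\nabla^*\nabla a + \Theta a = \beta$. Pairing with $a$ pointwise and taking real parts gives
\[
  \tfrac12\,\mathrm{Re}\langle \nabla^*\nabla a, a\rangle + \langle \Theta a, a\rangle = \mathrm{Re}\langle \beta, a\rangle.
\]
The standard Bochner--Kato computation shows $\mathrm{Re}\langle \nabla^*\nabla a, a\rangle = \tfrac12\Delta' |a|^2 + |\nabla a|^2 \geq \tfrac12\Delta'|a|^2$ (with $\Delta' = -\Delta$ the positive Laplacian in the sign convention making $\Delta' \geq 0$; I will match signs to the paper's $\Delta = \partial_x^2+\partial_y^2$ carefully), hence away from the zero set of $a$ one gets the differential inequality
\[
  -\tfrac14\Delta\,|a|^2 + \sigma |a|^2 \leq \mathrm{Re}\langle \beta, a\rangle \leq \norm{\beta}_{L^\infty}\,|a|.
\]
A cleaner route is to work with $w := |a|$ directly: the refined Kato inequality combined with the Bochner identity yields $(-\Delta + 2\sigma)w \leq 2\norm{\beta}_{L^\infty}$ in the distributional (or viscosity) sense on all of $\affine/\Gamma$. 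Then, since the torus is compact, $w$ attains its maximum at some point $z_0$; at an interior maximum $-\Delta w(z_0) \geq 0$, so $2\sigma\, w(z_0) \leq 2\norm{\beta}_{L^\infty}$, i.e. $\norm{a}_{L^\infty} = w(z_0) \leq \norm{\beta}_{L^\infty}/\sigma$. This is in fact sharper than the claimed constant $8/\sigma$, which leaves ample room; alternatively one reproduces the exact scheme of Lemma \ref{lemma: L^infty estimate for functions}, applying that scalar lemma to $|a|^2$ or to $|a|$ after absorbing the curvature term, to land on the constant $8/\sigma$ as stated.

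**Main obstacle.** The delicate point is justifying the maximum-principle / Kato-inequality step at points where $a$ vanishes, since $|a|$ is only Lipschitz, not $C^2$, there. I would handle this in the usual way: either replace $|a|$ by $(|a|^2 + \epsilon^2)^{1/2}$, derive the differential inequality for the smooth regularization with an error that vanishes as $\epsilon \to 0$, and pass to the limit; or argue that if the maximum of $|a|$ occurs at a zero of $a$ then $a \equiv 0$ and the inequality is trivial, while otherwise $|a|$ is smooth near the maximum and the classical second-derivative test applies. The curvature lower bound $\sigma$ enters only through the zeroth-order term and causes no trouble. Since this lemma is quoted from \cite[Proposition 4.2]{Tsukamoto deformation}, I would present the Bochner-plus-maximum-principle argument concisely and refer to that paper for the regularization details if a fully self-contained treatment is not desired.
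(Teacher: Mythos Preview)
Your approach is correct. Note that the paper itself does not give a proof of this lemma; it simply cites \cite[Proposition 4.2]{Tsukamoto deformation}. Your Bochner-plus-maximum-principle argument is exactly the natural way to establish the estimate, and in fact on the compact torus it yields the sharper constant $1/\sigma$: from $\tfrac12\nabla^*\nabla a + \Theta a = \beta$ one gets, via $-\tfrac14\Delta|a|^2 + \tfrac12|\nabla a|^2 + \langle\Theta a,a\rangle = \mathrm{Re}\langle\beta,a\rangle$, the inequality $-\tfrac14\Delta|a|^2 + \sigma|a|^2 \leq \norm{\beta}_{L^\infty}|a|$, and evaluating at a maximum of $|a|^2$ (where $-\Delta|a|^2\geq 0$) gives $\sigma\norm{a}_{L^\infty}\leq\norm{\beta}_{L^\infty}$. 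The constant $8/\sigma$ in the statement presumably reflects the more general non-compact setting of the cited reference, where one cannot simply evaluate at a maximum and must instead argue as in Lemma \ref{lemma: L^infty estimate for functions}.

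One small remark on your ``main obstacle'': you can sidestep the regularity issue entirely by working with $|a|^2$ rather than $|a|$. Since $a$ is $C^2$, the scalar function $|a|^2$ is genuinely $C^2$ on the whole torus, its maximum is attained by compactness, and the second-derivative test applies there without any smoothing. So no $\epsilon$-regularization is needed.
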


\begin{lemma} \label{lemma: solving d-bar equation}
Suppose a positive number $K$ satisfies 
\[ \norm{\Theta}_{C^1_\nabla} := \norm{\Theta}_{L^\infty(\affine/\Gamma)} + \norm{\nabla\Theta}_{L^\infty(\affine/\Gamma)}
   \leq K.\]
Then for any $b\in L^2_2(\Omega^{0,1}(\mathcal{E}))$ there exists $a\in L^2_4(\Omega^{0,1}(\mathcal{E}))$
satisfying 
\[ \bar{\partial}\bar{\partial}^* a = b, \quad 
   \norm{a}_{L^\infty(\affine/\Gamma)} + \norm{\nabla a}_{L^\infty(\affine/\Gamma)} 
   \lesssim_{\sigma,K} \norm{b}_{L^\infty(\affine/\Gamma)}.\]
\end{lemma}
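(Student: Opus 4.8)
The plan is to run the same argument as in the proof of Lemma \ref{lemma: solving the Helmholtz equation}, with $-\Delta+1$ replaced by $\bar\partial\bar\partial^*$, Lemma \ref{lemma: L^infty estimate for functions} replaced by Lemma \ref{lemma: L^infty estimate}, and the Bochner formula (\ref{eq: Bochner formula}) playing the role of the defining identity of $-\Delta+1$. The only genuinely new ingredient is the $L^\infty$-bound on $\nabla a$, and that is where the hypothesis $\norm{\Theta}_{C^1_\nabla}\leq K$ is used.

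\emph{Existence in $L^2$.} On the Hilbert space $L^2_1(\Omega^{0,1}(\mathcal{E}))$ consider the bounded symmetric bilinear form $B(a,a') := \tfrac12(\nabla a,\nabla a')_{L^2} + (\Theta a,a')_{L^2}$. Since $\mathcal{E}$ lives over a Riemann surface we have $\Omega^{0,2}(\mathcal{E})=0$, so (\ref{eq: Bochner formula}) shows that $B(a,a')=(\bar\partial\bar\partial^* a,a')_{L^2}$ for smooth $a,a'$, i.e. $B$ is the weak form of $\bar\partial\bar\partial^*$. By the curvature positivity $\langle\Theta u,u\rangle\geq\sigma|u|^2$ we get $B(a,a)\geq\tfrac12\norm{\nabla a}_{L^2}^2+\sigma\norm{a}_{L^2}^2\gtrsim_\sigma\norm{a}_{L^2_1}^2$, so $B$ is coercive (in particular it has no kernel: the positivity of $\Theta$ forces $H^1$ to vanish). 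By the Riesz representation theorem, for any $b\in L^2(\Omega^{0,1}(\mathcal{E}))$ — in particular for the given $b\in L^2_2$ — there is a unique $a\in L^2_1$ with $B(a,a')=(b,a')_{L^2}$ for all $a'$, hence a distributional solution of $\bar\partial\bar\partial^* a=b$.

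\emph{Regularity and the $L^\infty$-bound on $a$.} The operator $\bar\partial\bar\partial^*$ is second order and elliptic with smooth coefficients, so, working in local trivializations of $\mathcal{E}$ over coordinate patches of $\affine/\Gamma$, the $L^p$ estimates \cite[Theorem 9.11]{Gilbarg--Trudinger} upgrade $a\in L^2_1$ with $\bar\partial\bar\partial^* a=b\in L^2_2$ to $a\in L^2_4$, and then the Sobolev embedding \cite[Corollary 7.11]{Gilbarg--Trudinger} gives $a\in C^2$ (in real dimension $2$, $L^2_4\hookrightarrow C^{2,\alpha}$). Now Lemma \ref{lemma: L^infty estimate} applies verbatim and yields $\norm{a}_{L^\infty(\affine/\Gamma)}\leq\frac{8}{\sigma}\norm{\bar\partial\bar\partial^* a}_{L^\infty(\affine/\Gamma)}=\frac{8}{\sigma}\norm{b}_{L^\infty(\affine/\Gamma)}$.

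\emph{The $L^\infty$-bound on $\nabla a$.} Rewrite (\ref{eq: Bochner formula}) as $\nabla^*\nabla a=2(b-\Theta a)$; by $\norm{\Theta}_{L^\infty}\leq K$ and the bound just obtained, $\norm{\nabla^*\nabla a}_{L^\infty}\leq 2\norm{b}_{L^\infty}+2K\norm{a}_{L^\infty}\lesssim_{\sigma,K}\norm{b}_{L^\infty}$. Pull $a$ back to the universal cover $\affine$ and work on unit disks $D_1(z_0)$, $z_0\in\affine$. Over $D_2(z_0)$ fix a good gauge for $\nabla$ in which the connection $1$-form is bounded (indeed Hölder) in terms of $\norm{\Theta}_{C^0}\leq K$ — a standard Coulomb, or holomorphic-frame, gauge-fixing; then $\nabla^*\nabla$ becomes $-\Delta$ plus lower-order terms whose coefficients are controlled by $K$. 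An interior estimate (the $L^p_2$ estimate with $p>2$, or a Schauder $C^{1,\alpha}$ estimate, using here the full $C^1_\nabla$-control of $\Theta$ to keep the coefficients regular) gives $\norm{\nabla a}_{L^\infty(D_1(z_0))}\lesssim_{K}\norm{\nabla^*\nabla a}_{L^\infty(D_2(z_0))}+\norm{a}_{L^\infty(D_2(z_0))}\lesssim_{\sigma,K}\norm{b}_{L^\infty}$. Since $z_0$ is arbitrary and the constant is independent of $\Gamma$, this gives $\norm{\nabla a}_{L^\infty(\affine/\Gamma)}\lesssim_{\sigma,K}\norm{b}_{L^\infty(\affine/\Gamma)}$, and adding the bound on $\norm{a}_{L^\infty}$ completes the proof. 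The main obstacle is precisely this last step: obtaining the gradient estimate with a constant depending only on $\sigma$ and $K$ and not on the lattice $\Gamma$. That uniformity is exactly what the $C^1_\nabla$-bound on the curvature buys (it provides the bounded geometry needed both for the gauge-fixing and for the interior elliptic estimates to be uniform), and it is the only place in the argument where that hypothesis enters.
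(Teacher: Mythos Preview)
Your proposal is correct and follows essentially the same route as the paper: existence via the Bochner formula and positivity of $\Theta$ (as in Lemma \ref{lemma: solving the Helmholtz equation}), the $L^\infty$-bound on $a$ from Lemma \ref{lemma: L^infty estimate}, and then a local interior elliptic estimate in a good gauge to control $\nabla a$ uniformly in $\Gamma$. The only cosmetic differences are that the paper works directly with $\bar\partial\bar\partial^*$ in the local frame (rather than first passing to $\nabla^*\nabla a = 2(b-\Theta a)$), uses the radial (parallel-transport) gauge explicitly to obtain $\norm{A}_{C^1}\lesssim_K 1$ from $\norm{\Theta}_{C^1_\nabla}\leq K$, and handles the injectivity-radius issue by passing to a finite cover rather than the universal cover; none of this affects the substance.
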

\begin{proof}
By the Bochner formula and the positive curvature condition, we can find $a\in L^2_4(\Omega^{0,1}(\mathcal{E}))$ satisfying 
$\bar{\partial}\bar{\partial}^* a=b$. 
This is the same as the proof of Lemma \ref{lemma: solving the Helmholtz equation}.
The problem is to show the required estimate.
We have $\norm{a}_{L^\infty}\lesssim_\sigma \norm{b}_{L^\infty}$ by Lemma \ref{lemma: L^infty estimate}.
We want to estimate $\norm{\nabla a}_{L^\infty}$.
By considering a finite covering, we can assume that the injectivity radius of the torus $\affine/\Gamma$ is greater than 
$3$.
Take a point $p\in \affine/\Gamma$.
From the condition $\norm{\Theta}_{C^1_\nabla}\leq K$, we can find a trivialization $g$ of $\mathcal{E}$ (as a Hermitian 
$C^\infty$ vector bundle) over the disk $D_2(p)$ such that the connection matrix $A$ representing $\nabla$ under $g$ satisfies 
\[ \norm{A}_{C^1(D_2(p))} \lesssim_K 1.\]
Such $g$ can be constructed by using the parallel translation along the geodesics from $p$.
Under the trivialization $g$, the operator $\bar{\partial}\bar{\partial}^*$ is expressed as 
\[  \bar{\partial}\bar{\partial}^* = \frac{-1}{2} \Delta + B_1\frac{\partial}{\partial x} + B_2\frac{\partial}{\partial y}
    + B_3,\]
where the $L^\infty$ norms of the coefficients $B_1, B_2,B_3$ over $D_2(p)$ are $\lesssim_K 1$.
Then by using the Sobolev embedding and the $L^3$ estimate 
\begin{equation*}
   \begin{split}
   \norm{\nabla a}_{L^\infty(D_1(p))} &\lesssim_K \norm{a}_{L^3_2(D_1(p))} \quad (\text{Sobolev embedding})\\
   &\lesssim_K \norm{a}_{L^3(D_2(p))} + \norm{b}_{L^3(D_2(p))} \quad (\text{$L^3$ estimate})\\   
   &\lesssim_{\sigma} \norm{b}_{L^\infty(\affine/\Gamma)}  \quad (\text{Lemma \ref{lemma: L^infty estimate}}).
   \end{split}
\end{equation*}
This holds for every point $p$. 
So we get $\norm{\nabla a}_{L^\infty(\affine/\Gamma)} \lesssim_{K,\sigma} \norm{b}_{L^\infty(\affine/\Gamma)}$.
\end{proof}

\subsection{Constructing holomorphic sections over the torus} \label{subsection: constructing holomorphic sections over the torus}

Let $E$ be a holomorphic vector bundle over the plane $\affine$
with a Hermitian metric $h$.
Let $\nabla$ and $\Theta$ be the canonical connection and
curvature of $(E,h)$ respectively.
We denote by $N$ the rank of $E$ and regard it as a universal constant.

\begin{lemma}  \label{lemma: sampling}
Suppose a positive number $K$ satisfies $\norm{\Theta}_{C^1_\nabla}\leq K$.
For any positive number $\varepsilon$ there exists a natural number $m=m(K,\varepsilon)$ satisfying the following 
statement.
Let $\square$ be a unit square in the plane. 
We can find $m$ points $p_1,\dots,p_m$ in $\square$ so that 
every $C^1$ section $u$ of $E$ over $D_1(\square)$ satisfies 
\[  \norm{u}_{L^\infty(\square)} \leq \max_{1\leq i\leq m} |u(p_i)| + 
    \varepsilon \left(\norm{u}_{L^\infty(D_1(\square))} + \norm{\bar{\partial}u}_{L^\infty(D_1(\square))}\right).\]
\end{lemma}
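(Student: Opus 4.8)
The plan is to take $p_1,\dots,p_m$ to be a sufficiently fine net of $\square$ and to bound the difference between $|u|_h$ at a net point and at a nearby point of $\square$ by an interior H\"older estimate for the $\bar\partial$-operator. The one thing that needs care is keeping the coefficient in front of $\max_i|u(p_i)|$ equal to $1$: this forces us to carry out the estimate in a \emph{unitary} trivialization of $E$, so that fiber norms coincide exactly with coordinate norms and the unavoidable $K$-dependent distortion is pushed entirely into the error term.

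Given a unit square $\square$, the set $D_1(\square)$ is contained in a disk of radius $<2$, so exactly as in the proof of Lemma \ref{lemma: solving d-bar equation} the hypothesis $\norm{\Theta}_{C^1_\nabla}\leq K$ provides a smooth unitary trivialization of $E$ over $D_1(\square)$ whose connection matrix $A$ satisfies $\norm{A}_{C^1(D_1(\square))}\lesssim_K 1$. In this trivialization a $C^1$ section $u$ is an $\affine^N$-valued function $\tilde u$ with $|u(z)|_h=|\tilde u(z)|$, and $\bar\partial u$ corresponds to $(\partial/\partial\bar z + A_{\bar z})\tilde u$; hence $\partial\tilde u/\partial\bar z = g$ for a function $g$ with $\norm{g}_{L^\infty(D_1(\square))}\lesssim_K \norm{u}_{L^\infty(D_1(\square))}+\norm{\bar\partial u}_{L^\infty(D_1(\square))}$. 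The standard interior H\"older estimate for the Cauchy--Riemann operator — split $\tilde u$ on a slightly larger square as the Cauchy transform of a cut-off of $g$, which is $C^{0,1/2}$-bounded by $\norm{g}_{L^\infty}$, plus a holomorphic remainder, which is Lipschitz on $\square$ with bound $\lesssim\norm{\tilde u}_{L^\infty(D_1(\square))}+\norm{g}_{L^\infty}$ via the Cauchy estimate — then yields a constant $C_5=C_5(K)$ with
\[ [\tilde u]_{C^{0,1/2}(\square)} \leq C_5\bigl(\norm{u}_{L^\infty(D_1(\square))}+\norm{\bar\partial u}_{L^\infty(D_1(\square))}\bigr) \]
for every $C^1$ section $u$ over $D_1(\square)$.

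Now choose $\delta=\delta(K,\varepsilon)>0$ with $C_5\sqrt{\delta}\leq\varepsilon$, and let $\{p_1,\dots,p_m\}\subset\square$ be a $\delta$-net (every point of $\square$ within distance $\delta$ of some $p_i$), which can be taken with $m\lesssim\delta^{-2}$ points, so $m=m(K,\varepsilon)$. Given a $C^1$ section $u$ over $D_1(\square)$, pick $z_0\in\square$ with $|u(z_0)|_h=\norm{u}_{L^\infty(\square)}$ and a net point $p_i$ with $|z_0-p_i|\leq\delta$; using that the trivialization is unitary,
\[ \norm{u}_{L^\infty(\square)}=|\tilde u(z_0)| \leq |\tilde u(p_i)| + |\tilde u(z_0)-\tilde u(p_i)| \leq \max_{1\leq i\leq m}|u(p_i)| + [\tilde u]_{C^{0,1/2}(\square)}\,\sqrt{\delta}, \]
which by the choice of $\delta$ is at most $\max_i|u(p_i)| + \varepsilon\bigl(\norm{u}_{L^\infty(D_1(\square))}+\norm{\bar\partial u}_{L^\infty(D_1(\square))}\bigr)$. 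The main obstacle is conceptual: one must \emph{not} try to bound $\nabla u$ in $L^\infty$ by $\norm{u}_{L^\infty}+\norm{\bar\partial u}_{L^\infty}$, since no such estimate holds (the Calder\'on--Zygmund inequality fails at the $L^\infty$ endpoint); using instead the $C^{0,\alpha}$ interior estimate, which \emph{is} valid there, is precisely what makes the net argument work, while the insistence on a unitary gauge keeps the leading constant equal to $1$.
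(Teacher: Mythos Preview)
Your argument is correct and follows the same overall plan as the paper: choose a $\delta$-dense subset of $\square$ and control the oscillation of $u$ between a point and the nearest sample by a H\"older-type estimate derived from $\norm{u}_{L^\infty}$ and $\norm{\bar\partial u}_{L^\infty}$, working in a trivialization furnished by $\norm{\Theta}_{C^1_\nabla}\le K$. The only real difference is how that H\"older bound is obtained. The paper uses the Sobolev embedding $L^3_1\hookrightarrow C^{0,1/3}$ together with the interior $L^3$ elliptic estimate $\norm{\nabla u}_{L^3(D_{1/2}(\square))}\lesssim_K \norm{u}_{L^\infty}+\norm{\bar\partial u}_{L^\infty}$, whereas you use the complex-analytic route: Cauchy transform of a cut-off of $\partial_{\bar z}\tilde u$ (which lands in $C^{0,1/2}$ with the right bound) plus a holomorphic remainder controlled by the Cauchy estimate. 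Your approach is a bit more elementary since it exploits the explicit fundamental solution of $\bar\partial$ and avoids general $L^p$ elliptic theory; the paper's approach is more generic and would transfer unchanged to other first-order elliptic operators. Your closing remark about the failure of the $L^\infty$ endpoint is exactly the point: both proofs sidestep it, yours via $C^{0,1/2}$, the paper's via $L^3_1\hookrightarrow C^{0,1/3}$.
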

\begin{proof}
Take a small positive number $\delta = \delta(K,\varepsilon)$. 
Let $\{p_1,\dots,p_m\}\subset \square$, $m=m(\delta)$, be a $\delta$-dense subset 
(i.e. $\square$ is contained in the union of $D_\delta(p_i)$).
From the Sobolev embedding 
$L^3_1(\mathbb{R}^2)\hookrightarrow C^{0,1/3}(\mathbb{R}^2)$ (\cite[Theorem 7.17]{Gilbarg--Trudinger}),
\[ \norm{u}_{L^\infty(\square)} \leq \max_{1\leq i\leq m}|u(p_i)| 
   + \const\cdot \delta^{1/3}\norm{\nabla u}_{L^3(D_{1/2}(\square))}.\]
As in the proof of Lemma \ref{lemma: solving d-bar equation}, the $L^3$ estimate implies
\[ \norm{\nabla u}_{L^3(D_{1/2}(\square))} \lesssim_K 
   \norm{u}_{L^\infty(D_1(\square))} + \norm{\bar{\partial}u}_{L^\infty(D_1(\square))}.\]
We choose $\delta \ll \tau^3$. Then we get the statement.
\end{proof}

Let $L$ be a positive number and set $\Lambda := [0,L]^2$. 
We consider the following condition.
\begin{condition}\label{condition: nondegeneracy of vector bundle}
There exist positive numbers $K, R$ and a non-negative $C^1$ function $\phi$ in the plane such that 
\begin{itemize}
  \item $\norm{\Theta}_{C^1_\nabla}\leq K$ and $\norm{\phi}_{C^1}\leq K$.
  \item $\langle \Theta u, u\rangle \geq \phi(z) |u|^2$ for all $z\in \affine$ and $u\in E_z$.
  \item $\phi$ is $R$-nondegenerate over the square $\Lambda$.
\end{itemize}
\end{condition}

Let $\Gamma :=(L+8)(\mathbb{Z} + \mathbb{Z}\sqrt{-1})$ be the lattice in $\affine$ generated by $L+8$ and $(L+8)\sqrt{-1}$.
We consider the torus $\affine/\Gamma$.

\begin{lemma} \label{lemma: constructing holomorphic sections over the torus}
There exists a positive number $L_1=L_1(K,R)$ such that 
if $L\geq L_1$ and Condition \ref{condition: nondegeneracy of vector bundle} holds
then we can construct a Hermitian holomorphic vector bundle $\mathcal{E}$ over the torus $\affine/\Gamma$
and a linear map $T:C^\infty((-1,L+1)^2,E)\to H^0(\affine/\Gamma, \mathcal{E})$ (a map from the 
space of $C^\infty$ sections of $E$ over $(-1,L+1)^2$ to the space of holomorphic sections 
of $\mathcal{E}$) satisfying the following conditions.

\noindent 
(1) 
\[ \left| \dim_\affine H^0(\affine/\Gamma, \mathcal{E})
   -\frac{1}{\pi}\int_\Lambda \mathrm{tr}(\Theta) dxdy\right| \lesssim_K L.\]

\noindent 
(2) For any $u\in C^\infty((-1,L+1)^2,E)$
\[ \norm{Tu}_{L^\infty(\affine/\Gamma)} \lesssim_{K,R} \norm{u}_{L^\infty((-1,L+1)^2)} + 
   \norm{\bar{\partial}u}_{L^\infty((-1,L+1)^2)},\]  
\[ \norm{u}_{L^\infty(\Lambda)} \lesssim_{K,R} \norm{Tu}_{L^\infty(\affine/\Gamma)} 
   + \norm{u}_{L^\infty((-1,L+1)^2\setminus \Lambda)} +\norm{\bar{\partial}u}_{L^\infty((-1,L+1)^2)}.\]
\end{lemma}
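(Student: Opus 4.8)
The plan is to transport the non\-compact problem to the compact elliptic curve $\affine/\Gamma$ and then read off both assertions from Riemann--Roch together with the analytic lemmas of Subsection~\ref{subsection: analysis of the operator (-Delta+1)}. First I would record that $E$ is holomorphically trivial over $\affine$ (a holomorphic bundle over a Stein, contractible surface), so after fixing a holomorphic trivialisation over a neighbourhood of the closed fundamental domain $F:=[-4,L+4]^2$ of $\Gamma$ I may regard $E$ as $\affine^{N}\times F$ equipped with a Hermitian matrix field $H$, with $\Theta=\Theta(H)$, $\langle \Theta u,u\rangle\ge\phi|u|^2$ and $\norm{\Theta}_{C^1_\nabla}\le K$. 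Then I would build $\mathcal{E}$ by clutching the two pairs of opposite edges of $F$ with holomorphic transition maps, interpolating the metric near $\partial F$ between $H$ and the clutched model so that the Hermitian metric of $\mathcal{E}$ still coincides with that of $E$ over $(-1,L+1)^2$. By arranging the clutching (and the interpolation near the ``seam'' $\partial F$) with care, one obtains a Hermitian holomorphic bundle $\mathcal{E}$ over $\affine/\Gamma$ whose curvature is bounded below by a $C^1$ function $\tilde\phi$ with $\norm{\tilde\phi}_{C^1}\lesssim_K 1$, $\tilde\phi=\phi$ over $(-1,L+1)^2$ and $\tilde\phi\ge -C_K$ elsewhere, and which is $R'$-nondegenerate over all of $\affine/\Gamma$ for some $R'=R'(R)$ — this uses that every point of $\affine/\Gamma$ lies within a bounded distance of $\Lambda$, where $\tilde\phi=\phi$ is $R$-nondegenerate — while also $\norm{\Theta_\mathcal{E}}_{C^1_\nabla}\lesssim_K 1$. \emph{Controlling the curvature inside the narrow region near the clutching seam, where $\mathcal{E}$ differs from $E$, is the main obstacle}, and the place where this step must be done carefully.

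For (1): since $\affine/\Gamma$ is an elliptic curve, $\chi(\mathcal{E})=\deg\mathcal{E}$, so Riemann--Roch gives $\dim_\affine H^0(\affine/\Gamma,\mathcal{E})-\dim_\affine H^1(\affine/\Gamma,\mathcal{E})=\deg\mathcal{E}=\frac1\pi\int_{\affine/\Gamma}\mathrm{tr}(\Theta_\mathcal{E})\,dxdy$ by Chern--Weil. The integrand equals $\mathrm{tr}(\Theta)$ on $(-1,L+1)^2$ (where $\mathcal{E}=E$) and is $O_K(1)$ on the seam region, of area $O(L)$; since $|\mathrm{tr}(\Theta)|\le NK$ the contribution of $(-1,L+1)^2\setminus\Lambda$ (also of area $O(L)$) is $O_K(L)$, hence $|\deg\mathcal{E}-\frac1\pi\int_\Lambda\mathrm{tr}(\Theta)\,dxdy|\lesssim_K L$. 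It therefore remains to prove $H^1(\affine/\Gamma,\mathcal{E})=0$ for $L\ge L_1(K,R)$, and this is where the constant $L_1(K,R)$ enters. By Serre duality $H^1(\affine/\Gamma,\mathcal{E})\cong H^0(\affine/\Gamma,\mathcal{E}^{\vee})^{*}$; a nonzero holomorphic section $s$ of $\mathcal{E}^{\vee}$ satisfies $\Delta\log|s|^2\ge 4\tilde\phi$ as currents on $\affine/\Gamma$ (the zero divisor of $s$ contributes non\-negatively and the curvature of $\mathcal{E}^{\vee}$ is $\le-\tilde\phi$), so integrating over the closed surface forces $\int_{\affine/\Gamma}\tilde\phi\le 0$, whence $\int_\Lambda\phi\le\int_{(\affine/\Gamma)\setminus\Lambda}|\tilde\phi|\lesssim_K L$. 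On the other hand $\norm{\phi}_{C^1}\le K$ and the $R$-nondegeneracy of $\phi$ over $\Lambda=[0,L]^2$ force $\int_\Lambda\phi\gtrsim_{K,R}L^2$ once $L$ is large: at each of $\gtrsim_{K,R}L^2$ suitably separated points of $\Lambda$ a value $\ge 1/R^2$ is attained nearby by nondegeneracy and, by the gradient bound, spreads over a disk of radius $\gtrsim_{K,R}1$. These two bounds are incompatible once $L\ge L_1(K,R)$, so no such $s$ exists, $H^1(\affine/\Gamma,\mathcal{E})=0$, and (1) follows.

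For (2) I would construct the map explicitly. Fix a smooth cut-off $\chi$ on $\affine$ with $\chi\equiv 1$ on $\Lambda$, $\supp\chi\subset(-1,L+1)^2$ and $\norm{\chi}_{C^1}\lesssim 1$. Given $u\in C^\infty((-1,L+1)^2,E)$, use the isomorphism $\mathcal{E}\cong E$ over $(-1,L+1)^2$ to view $\chi u$ as a smooth section $\tilde u$ of $\mathcal{E}$ over $\affine/\Gamma$ (extended by zero). Since $H^1(\affine/\Gamma,\mathcal{E})=0$ the operator $\bar{\partial}\bar{\partial}^{*}$ on $\Omega^{0,1}(\mathcal{E})$ is invertible, and by the Bochner formula (\ref{eq: Bochner formula}) together with an argument combining the compactness reasoning of Lemma~\ref{lemma: nondegeneracy implies positivity} with the elliptic estimates behind Lemmas~\ref{lemma: L^infty estimate} and \ref{lemma: solving d-bar equation} (now with the $R'$-nondegenerate lower bound $\tilde\phi$ in place of a uniform $\sigma$) there is a unique $a\in\Omega^{0,1}(\mathcal{E})$ with $\bar{\partial}\bar{\partial}^{*}a=\bar{\partial}\tilde u$ and $\norm{a}_{L^\infty(\affine/\Gamma)}+\norm{\nabla a}_{L^\infty(\affine/\Gamma)}\lesssim_{K,R}\norm{\bar{\partial}\tilde u}_{L^\infty(\affine/\Gamma)}$. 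Set $Tu:=\tilde u-\bar{\partial}^{*}a$; then $\bar{\partial}(Tu)=\bar{\partial}\tilde u-\bar{\partial}\bar{\partial}^{*}a=0$, so $Tu\in H^0(\affine/\Gamma,\mathcal{E})$, and $u\mapsto Tu$ is linear. Since $\bar{\partial}\tilde u=\chi\,\bar{\partial}u+(\bar{\partial}\chi)u$ with $\supp\bar{\partial}\chi\subset(-1,L+1)^2\setminus\Lambda$, we get $\norm{\bar{\partial}\tilde u}_{L^\infty(\affine/\Gamma)}\lesssim\norm{\bar{\partial}u}_{L^\infty((-1,L+1)^2)}+\norm{u}_{L^\infty((-1,L+1)^2)}$, hence $\norm{Tu}_{L^\infty(\affine/\Gamma)}\le\norm{\tilde u}_{L^\infty(\affine/\Gamma)}+\norm{\bar{\partial}^{*}a}_{L^\infty(\affine/\Gamma)}\lesssim_{K,R}\norm{u}_{L^\infty((-1,L+1)^2)}+\norm{\bar{\partial}u}_{L^\infty((-1,L+1)^2)}$, the first inequality of (2). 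For the second, over $\Lambda$ the metrics of $E$ and $\mathcal{E}$ coincide and $u=\tilde u=Tu+\bar{\partial}^{*}a$, so $\norm{u}_{L^\infty(\Lambda)}\le\norm{Tu}_{L^\infty(\affine/\Gamma)}+\norm{\bar{\partial}^{*}a}_{L^\infty(\affine/\Gamma)}\lesssim_{K,R}\norm{Tu}_{L^\infty(\affine/\Gamma)}+\norm{u}_{L^\infty((-1,L+1)^2\setminus\Lambda)}+\norm{\bar{\partial}u}_{L^\infty((-1,L+1)^2)}$, again using $\supp\bar{\partial}\chi\subset(-1,L+1)^2\setminus\Lambda$. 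Apart from the construction of $\mathcal{E}$ near the seam in the first step, everything here is Riemann--Roch, the subharmonicity estimate pinning down $L_1(K,R)$, and a routine cut-off\-and\-solve\--$\bar{\partial}$ argument.
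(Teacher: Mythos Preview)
Your overall architecture --- transplant to the torus, cut off, solve $\bar\partial$, and read off the dimension by Riemann--Roch --- matches the paper, and your formula $Tu=\chi u-\bar\partial^*(\bar\partial\bar\partial^*)^{-1}\bar\partial(\chi u)$ is exactly the paper's map. But there is one genuine gap, and it hides the main analytic idea of the proof.

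You end the construction of $\mathcal{E}$ with a curvature lower bound $\langle\Theta_{\mathcal{E}}u,u\rangle\ge\tilde\phi\,|u|^2$ where $\tilde\phi$ is merely $R'$-nondegenerate and may be as negative as $-C_K$ on the seam. You then invoke Lemmas~\ref{lemma: L^infty estimate} and \ref{lemma: solving d-bar equation} ``with the $R'$-nondegenerate lower bound $\tilde\phi$ in place of a uniform $\sigma$'' to get $\norm{a}_{L^\infty}+\norm{\nabla a}_{L^\infty}\lesssim_{K,R}\norm{\bar\partial\bar\partial^*a}_{L^\infty}$ with constants independent of $\Gamma$. Those lemmas, however, are stated and proved only under a \emph{pointwise} lower bound $\langle\Theta u,u\rangle\ge\sigma|u|^2$ with $\sigma>0$; the Bochner maximum-principle argument behind Lemma~\ref{lemma: L^infty estimate} breaks down at points where $\tilde\phi\le 0$, and ``combining the compactness reasoning of Lemma~\ref{lemma: nondegeneracy implies positivity}'' is not a proof of the vector-bundle analogue you need. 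The paper closes this gap by a conformal change of metric: using Lemma~\ref{lemma: solving the Helmholtz equation} one solves $(-\Delta+1)\varphi=4\psi$ on the torus (this is where Lemma~\ref{lemma: nondegeneracy implies positivity} is actually used) to obtain $\varphi\gtrsim_{K,R}1$, and then replaces $h'$ by $h_{\mathcal{E}}:=e^{-\varphi}h'$. The new curvature is $\Theta_{\mathcal{E}}=\Theta_{\nabla'}-\psi+\varphi/4\ge(\varphi/4)\,\mathrm{Id}\gtrsim_{K,R}\mathrm{Id}$, genuinely uniformly positive, so Lemmas~\ref{lemma: L^infty estimate} and \ref{lemma: solving d-bar equation} apply verbatim with $\sigma=\sigma(K,R)$. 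This conformal twist is the missing step in your argument; once it is in place, your Serre-duality/integral proof of $H^1=0$ becomes unnecessary (positive curvature gives Kodaira vanishing directly), and the rest of your derivation of (1) and (2) goes through.

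A secondary point: the paper's construction of $\mathcal{E}$ avoids your acknowledged ``main obstacle'' at the seam by pulling back $(E,h)$ under a map $\Phi(x,y)=(\alpha(x),\alpha(y))$ that freezes outside $[-2,L+2]^2$. The pulled-back connection is then \emph{flat} on the seam, and after cutting off a connection matrix of size $\lesssim 1/L$ one gets $\psi\ge -c/L$ there (not just $\ge -C_K$). This is what makes $\psi$ satisfy the hypotheses of Lemma~\ref{lemma: solving the Helmholtz equation} for $L$ large and feeds into the conformal change above.
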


\begin{proof}
Let $\alpha$ be a smooth non-decreasing function in $\mathbb{R}$ such that 
$\alpha(t)=t$ over $-1\leq t\leq L+1$, $\alpha'(t)=0$ over $\{t\leq -2\}\cup\{t\geq L+2\}$ and 
$|\alpha^{(k)}|\lesssim_k 1$ for $k\geq 1$.
We define $\Phi:\affine\to \affine$ by $\Phi(x,y):=(\alpha(x),\alpha(y))$.
Let $(E',h')$ be the pull-back of $(E,h)$ by the map $\Phi$.
This is a $C^\infty$ Hermitian vector bundle over the plane.
By Condition \ref{condition: nondegeneracy of vector bundle}, the curvature of the pull-back connection $\Phi^*\nabla$ satisfies 
\[ \langle \Theta_{\Phi^*\nabla}u, u\rangle \geq \alpha'(x)\alpha'(y) \phi(\alpha(x),\alpha(y))|u|^2, \quad 
   (u\in E'_{(x,y)}).\]
$\Phi^*\nabla$ is flat outside of $[-2,L+2]^2$.
We can classify flat unitary connections over the complement of $[-2,L+2]^2$ by their holonomy maps 
$\pi_1(\affine\setminus [-2,L+2]^2) \to U(N)$.
Then there exists a trivialization $g$ of $(E',h')$ outside of $[-2,L+2]^2$ such that the connection matrix 
$A$ of $\Phi^*\nabla$ with respect to $g$ satisfies $|A|\lesssim 1/L$.
(More precisely we can assume that $A$ has the form $A=a dx$ with $|a|\lesssim 1/L$.)
We take a cut-off $\beta:\affine\to [0,1]$ such that 
$\beta=1$ on $[-2,L+2]^2$ and $\beta=0$ outside of $[-3,L+3]^2$.
We define a unitary connection $\nabla'$ on $E'$ by 
$\nabla' := g^{-1}(d+\beta A)$.

We define a $C^\infty$ complex vector bundle $\mathcal{E}$ on $\affine/\Gamma$ as follows.
We naturally regard the torus $\affine/\Gamma$ as the quotient of the square 
$[-4,L+4]^2$ (the opposite edges are identified).
We glue the bundle $E'|_{(-4,L+4)^2}$ and $\left([-4,L+4]^2\setminus [-3,L+3]^2\right)\times \affine^N$
by the trivialization $g$.
This provides a $C^\infty$ vector bundle on the torus $\affine/\Gamma$.
We denote it by $\mathcal{E}$. 
The metric $h'$ and the connection $\nabla'$ descend to the torus and become a metric and a unitary connection on $\mathcal{E}$.
Let $\bar{\partial}_{\mathcal{E}}:\Omega^0(\mathcal{E})\to \Omega^{0,1}(\mathcal{E})$ be the $(0,1)$-part of the 
covariant derivative $\nabla': \Omega^0(\mathcal{E})\to \Omega^1(\mathcal{E})$.
This provides a holomorphic structure on $\mathcal{E}$.
The curvature of the connection $\nabla'$ satisfies 
\[ \langle \Theta_{\nabla'}u, u\rangle \geq -\frac{c}{L}|u|^2 \quad (u\in \mathcal{E})\]
outside of $[-2,L+2]^2$.
Here $c$ is a universal constant.
We define a function $\psi$ in the torus $\affine/\Gamma$ by 
\[ \psi(x,y) := \alpha'(x)\alpha'(y) \phi(\alpha(x),\alpha(y)) - \frac{c}{L}\quad (-4\leq x,y\leq L+4).\]
This satisfies $\langle \Theta_{\nabla'}u, u\rangle \geq \psi |u|^2$ all over the torus, and 
\[ \psi\geq -\frac{c}{L}, \quad \norm{\psi}_{C^1}\lesssim_K 1.\]
Since $\phi$ is $R$-nondegenerate over $[0,L]^2$, if $L$ is sufficiently larger than $R^2$ then 
$\psi$ is $(R+10)$-nondegenerate over the torus $\affine/\Gamma$.
By Lemma \ref{lemma: solving the Helmholtz equation} if $L$ is 
sufficiently large then we can find $\varphi:\affine/\Gamma\to \mathbb{R}$ satisfying 
\[ (-\Delta+1)\varphi=4\psi, \quad \norm{\varphi}_{C^2}\lesssim_{K} 1, \quad 
   \varphi \gtrsim_{K,R} 1. \]
We define a metric on $\mathcal{E}$ by $h_{\mathcal{E}} := e^{-\varphi}h'$.
Let $\nabla_{\mathcal{E}}$ and $\Theta_{\mathcal{E}}$ be the canonical connection and  
curvature of the Hermitian holomorphic vector bundle $(\mathcal{E},h_{\mathcal{E}})$.
We have $\Theta_{\mathcal{E}} = \Theta_{\nabla'} +\Delta\varphi/4 = \Theta_{\nabla'} -\psi +\varphi/4$.
This enjoys a positivity:
\[ \langle \Theta_{\mathcal{E}}u, u\rangle \geq (\varphi/4)|u|^2 \gtrsim_{K,R} |u|^2 \quad (u\in \mathcal{E}).\]
It also satisfies $\norm{\Theta_{\mathcal{E}}}_{C^1_{\nabla_{\mathcal{E}}}}\lesssim_{K} 1$.
Therefore $(\mathcal{E},h_{\mathcal{E}})$ satisfies the assumption of Lemma \ref{lemma: solving d-bar equation}.

So far we have constructed the Hermitian holomorphic vector bundle $(\mathcal{E},h_{\mathcal{E}})$.
Next we will construct a linear map $T:C^\infty((-1,L+1)^2,E)\to H^0(\affine/\Gamma, \mathcal{E})$.
Take a cut-off $\gamma:\affine \to [0,1]$ such that 
$\gamma=1$ over $\Lambda = [0,L]^2$ and it is supported in $(-1,L+1)^2$.
Note that $(\mathcal{E},h_{\mathcal{E}})$ is naturally identified with $(E,h)$ over $(-1,L+1)^2$.
Let $u\in C^\infty((-1,L+1)^2,E)$ and consider $\gamma u$. 
This is supported in $(-1,L+1)^2$, and then it is naturally identified with a section of $\mathcal{E}$ over the torus.
We define a holomorphic section $T(u)\in H^0(\affine/\Gamma, \mathcal{E})$ by 
\begin{equation} \label{eq: definition of a linear map T}
 T(u) := \gamma u - 
 \bar{\partial}_{\mathcal{E}}^*(\bar{\partial}_{\mathcal{E}}\bar{\partial}_{\mathcal{E}}^*)^{-1} 
 \left(\bar{\partial}_{\mathcal{E}}(\gamma u)\right).
\end{equation}
Here the inverse $(\bar{\partial}_{\mathcal{E}}\bar{\partial}_{\mathcal{E}}^*)^{-1}$ is provided by Lemma 
\ref{lemma: solving d-bar equation}.

We check the conditions (1) and (2).
First, by the positivity of the curvature $\Theta_{\mathcal{E}}$, the cohomology $H^1(\affine/\Gamma, \mathcal{E})$
vanishes. Then by the Riemann--Roch formula
\[ \dim_\affine H^0(\affine/\Gamma,\mathcal{E}) = \int_{\affine/\Gamma}c_1(\mathcal{E}) 
  = \frac{1}{\pi}\int_{\affine/\Gamma} \mathrm{tr}(\Theta_{\mathcal{E}})dxdy. \]
We have $\Theta_{\mathcal{E}}=\Theta$ over $[-1,L+1]^2$ and $|\Theta_{\mathcal{E}}|\lesssim_K 1$. Hence
\[   \dim_\affine H^0(\affine/\Gamma,\mathcal{E}) = \frac{1}{\pi}\int_\Lambda \mathrm{tr}(\Theta)dxdy
     + O(L),\]
where the implicit constant depends only on $K$. This shows the condition (1).     
  
Second, we look at the formula (\ref{eq: definition of a linear map T}) and use Lemma \ref{lemma: solving d-bar equation}:
\[ \norm{Tu}_{L^\infty(\affine/\Gamma)} \lesssim_{K,R} \norm{u}_{L^\infty((-1,L+1)^2)}
    + \norm{\bar{\partial}_{\mathcal{E}}(\gamma u)}_{L^\infty(\affine/\Gamma)}
    \lesssim \norm{u}_{L^\infty((-1,L+1)^2)} + \norm{\bar{\partial}u}_{L^\infty((-1,L+1)^2)}.\]
This is the first half of the condition (2). The other also follows from Lemma \ref{lemma: solving d-bar equation}.
\begin{equation*}
   \begin{split}
   \norm{u}_{L^\infty(\Lambda)} &\leq \norm{\gamma u}_{L^\infty((-1,L+1)^2)} 
   \leq \norm{Tu}_{L^\infty(\affine/\Gamma)} + 
   \norm{\bar{\partial}_{\mathcal{E}}^*(\bar{\partial}_{\mathcal{E}}\bar{\partial}_{\mathcal{E}}^*)^{-1} 
   \left(\bar{\partial}_{\mathcal{E}}(\gamma u)\right)}_{L^\infty(\affine/\Gamma)} \\
   &\lesssim_{K,R} \norm{Tu}_{L^\infty(\affine/\Gamma)} + \norm{\bar{\partial}_{\mathcal{E}}(\gamma u)}_{L^\infty(\affine/\Gamma)}
   \quad (\text{Lemma \ref{lemma: solving d-bar equation}})\\
   &\lesssim \norm{Tu}_{L^\infty(\affine/\Gamma)} + \norm{u}_{L^\infty((-1,L+1)^2\setminus \Lambda)}
     + \norm{\bar{\partial} u}_{L^\infty ((-1,L+1)^2)}.
   \end{split}
\end{equation*}
This finishes the proof.
\end{proof}

\subsection{Proof of Proposition \ref{prop: local study around nondegenerate curve}} 
\label{subsection: proof of Proposition local study around nondegenerate curve}

Let $T\affine P^N$ be the tangent bundle of the projective space, and let 
\[ \exp:T\affine P^N\to \affine P^N \]
be the exponential map with respect to the Fubini--Study metric.

\begin{lemma} \label{lemma: nonlinear CR equation}
For any $\varepsilon>0$ there exists $\delta>0$ satisfying the following statement.
Let $p\in \affine$ and $f,g_1,g_2\in \moduli_2(\affine P^N)$, and suppose 
\[   \sup_{z\in D_1(p)} d(f(z),g_i(z))\leq \delta \quad (i=1,2).\]
Take $u_i\in C^\infty(D_1(p), f^*T\affine P^N)$ with $g_i(z) = \exp_{f(z)}u_i(z)$ and $|u_i|\leq \delta$. 
Then 
\[ |\bar{\partial}u_1(p)-\bar{\partial}u_2(p)| \leq \varepsilon \norm{u_1-u_2}_{L^\infty(D_1(p))}.\]
\end{lemma}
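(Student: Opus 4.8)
The plan is to reduce the statement to a linear fact about the $\bar\partial$-operator by differentiating the nonlinear Cauchy--Riemann equation satisfied by holomorphic curves. The key observation is that if $g:\affine\to\affine P^N$ is holomorphic and we write $g(z)=\exp_{f(z)}u(z)$ for a section $u$ of $f^*T\affine P^N$, then $u$ satisfies a nonlinear equation of the schematic form $\bar\partial u = Q(z,u,\nabla u)$, where $Q$ is a smooth expression, vanishing to second order in $(u,\nabla u)$ jointly with the antiholomorphicity defect of $f$; more precisely, since $f$ itself is holomorphic, $Q(z,0,0)=0$ and the linearization of this equation at $u=0$ is just the honest $\bar\partial$-operator on $f^*T\affine P^N$ (equipped with the pulled-back connection). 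This is the standard picture behind deformation theory of pseudo-holomorphic curves, specialized here to the integrable complex structure on $\affine P^N$.

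First I would set up coordinates: fix $f\in\moduli_2(\affine P^N)$ and work in a geodesic normal frame for $f^*T\affine P^N$ over $D_1(p)$. Because $f$ is $2$-Lipschitz, the relevant geometric data (the metric on $\affine P^N$, its curvature, the frame) have bounds depending only on universal constants, so all estimates below are uniform in $f$ and $p$. Writing the holomorphicity of $g_i$ in terms of $u_i$ gives $\bar\partial u_i = \mathcal{R}(z,u_i,\partial u_i)$ where $\mathcal{R}$ is a smooth function, $C^1$-bounded on the region $|u|\le\delta_0$, $|\partial u|\le C$ (the derivative bound on $u_i$ coming from the $2$-Lipschitz condition on $g_i$ via elliptic estimates for $\bar\partial$), and with $\mathcal{R}=O(|u|^2+|u||\partial u|)$. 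Subtracting the two equations at the point $p$,
\[
\bar\partial u_1(p)-\bar\partial u_2(p)=\mathcal{R}(p,u_1,\partial u_1)-\mathcal{R}(p,u_2,\partial u_2),
\]
and by the mean value theorem this is bounded by $C\big(|u_1(p)|+|u_2(p)|+|\partial u_1(p)|+|\partial u_2(p)|\big)\big(|u_1(p)-u_2(p)|+|\partial u_1(p)-\partial u_2(p)|\big)$, using that $\mathcal{R}$ has no zeroth-order term in $(u,\partial u)$.

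The second step is to absorb the first-order quantities. Interior elliptic (Cauchy/Schauder) estimates for $\bar\partial$ on $D_1(p)$, together with the equation, give $|\partial u_i(p)|\lesssim \norm{u_i}_{L^\infty(D_1(p))}+\norm{\bar\partial u_i}_{L^\infty(D_1(p))}\lesssim \delta$ (shrinking the disk slightly and using $\norm{u_i}_{L^\infty}\le\delta$ and the quadratic bound on $\mathcal{R}$), and similarly $|\partial u_1(p)-\partial u_2(p)|\lesssim\norm{u_1-u_2}_{L^\infty(D_1(p))}$. Plugging these in, the right-hand side becomes $\lesssim \delta\,\norm{u_1-u_2}_{L^\infty(D_1(p))}$, and choosing $\delta$ small enough (depending on the universal constants and on $\varepsilon$) makes this $\le\varepsilon\norm{u_1-u_2}_{L^\infty(D_1(p))}$. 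Note that the condition $d(f,g_i)\le\delta$ on $D_1(p)$ guarantees the existence and smallness of $u_i$ with $|u_i|\le\delta$ (the exponential map is a diffeomorphism on the relevant ball, again with uniform constants since $\affine P^N$ is homogeneous), so no extra hypothesis is needed.

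The main obstacle I anticipate is bookkeeping the uniformity: one must check that the constant in the elliptic estimate for $\bar\partial$ on $f^*T\affine P^N$, and the $C^1$-bound on the nonlinearity $\mathcal{R}$, depend only on $N$ and the $2$-Lipschitz bound, not on the particular curve $f$. This follows because the pulled-back metric and connection on $f^*T\affine P^N$ over $D_1(p)$ are controlled: $\affine P^N$ has bounded geometry and $|df|\le 2$, so $f^*(\text{metric})$, $f^*(\text{curvature})$ and their derivatives are bounded by universal constants. Once this uniformity is in hand, everything else is a routine application of the implicit-function-theorem-style estimate for the $\bar\partial$-equation, and the quadratic vanishing of $\mathcal{R}$ at the origin is exactly what produces the small factor $\varepsilon$.
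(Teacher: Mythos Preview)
Your proposal is correct and follows essentially the same strategy as the paper: exploit that the nonlinear term arising from $g_i=\exp_{f}u_i$ vanishes to second order in $(u_i,\partial u_i)$, so that after subtracting the two equations one picks up a factor $\delta$ in front of $\norm{u_1-u_2}$.

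The paper executes this more concretely than you do. Instead of working in a geodesic normal frame and invoking abstract elliptic estimates, it passes to the affine chart $[1:w]$ around $f(p)$, writes $f=[1:F]$, $g_i=[1:G_i]$ with $F,G_i$ genuinely holomorphic, and expresses the inverse exponential as $v_i=G_i-F+Q(F,G_i-F)$ with $Q(w,0)=0$ and $\nabla_\xi Q(w,0)=0$. Then $\bar\partial v_i$ is computed explicitly, and the derivative bounds you obtain via elliptic bootstrapping (e.g.\ $|\partial u_i|\lesssim\delta$ and $|\partial u_1-\partial u_2|\lesssim\norm{u_1-u_2}_{L^\infty}$) come immediately from the classical Cauchy estimate applied to the holomorphic functions $G_i-F$ and $G_1-G_2$. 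This bypasses the small absorption argument you need to close the loop between $\|\partial u_i\|$ and $\|\bar\partial u_i\|$. Your abstract formulation has the advantage of making the deformation-theoretic structure transparent and would generalize more readily (e.g.\ to non-integrable almost complex structures), while the paper's coordinate computation is shorter and entirely elementary.
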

\begin{proof}
We can assume $p=0$ and $f(p)=[1:0:\dots:0]$.
Let $[1:w_1:\dots:w_N]$ be the standard coordinate around $f(p)$.
Set $w:=(w_1,\dots,w_N)$.
We choose $\delta>0$ so small that $f(D_{1/10})$, $g_1(D_{1/10})$ and $g_2(D_{1/10})$
are all contained in $\{[1:w]|\, |w|<1\}$.
For $|w|<2$ and $v=(v_1,\dots,v_N)\in \affine^N$ with $|v|\ll 1$, we set
\[ [1:\zeta] := \exp_{[1:w]}\left(\sum_{n=1}^N v_n\frac{\partial}{\partial w_n}\right).\]
$\zeta$ can be expressed as 
$\zeta = w+v+P(w,v)$ with a $C^\infty$ function $P(w,v)$ satisfying 
$P(w,0)=0$ and $\nabla_v P(w,0)=0$.
By the implicit function theorem we can write $v$ as a function of $w$ and $\zeta$:
\[ v=\zeta-w + Q(w,\zeta-w),\]
where $Q(w,\xi)$ is a $C^\infty$ function satisfying 
\begin{equation} \label{eq: conditions on Q}
   Q(w,0)=0, \quad  \nabla_\xi Q(w,0)=0.
\end{equation}

Let $f(z)=[1:F(z)]$, $g_i(z)=[1:G_i(z)]$ and $u_i(z) = \sum_{n=1}^N v_{in}\partial/\partial w_n$
over $D_{1/10}$.
Set $v_i := (v_{i1},\dots,v_{iN})$. We have $|v_i|\leq \delta \ll 1$ and
\[ v_i(z) = G_i(z)-F(z) + Q(F(z),G_i(z)-F(z)). \]
Differentiating this, 
\[ \frac{\partial v_i}{\partial \bar{z}} = \nabla_w Q(F(z),G_i(z)-F(z))* F'(z)
    +\nabla_{\xi} Q(F(z),G_i(z)-F(z))*(G_i'(z) - F'(z)).\]
We have $|G_i(p)-F(p)|\lesssim \delta$ and $|G'_i(p)-F'(p)|\lesssim \delta$ by the Cauchy estimate.
By (\ref{eq: conditions on Q})
\begin{equation*}
   \begin{split}
   \left|\frac{\partial v_1}{\partial \bar{z}}(p)-\frac{\partial v_2}{\partial \bar{z}}(p)\right|
   &\lesssim \delta |G_1(p)-G_2(p)| + \delta |G'_1(p)-G'_2(p)| \\
   &\lesssim \delta \sup_{z\in D_{1/10}}|G_1(z)-G_2(z)| \quad (\text{Cauchy estimate})\\
   &\lesssim \delta \norm{u_1-u_2}_{L^\infty(D_{1/10})}.
   \end{split}
\end{equation*}
\end{proof}

We restate Proposition \ref{prop: local study around nondegenerate curve}.

\begin{proposition}[= Proposition \ref{prop: local study around nondegenerate curve}]
For any $R>0$ and $0<\varepsilon<1$ there exist positive numbers $\delta_2=\delta_2(R)$, 
$C_2=C_2(R)$ and $C_3=C_3(\varepsilon)$ satisfying the following statement.
Let $f\in \moduli_2(\affine P^N)$, and let $\Lambda\subset \affine$ be a square of side length $L\geq 1$.
Suppose $f$ is $R$-nondegenerate over $\Lambda$. Then 
\begin{equation*}
  \#_{\mathrm{sep}}\left(\{g\in \moduli_2(\affine P^N)|\, \mathbf{d}_{D_5(\Lambda)}(f,g)\leq \delta_2\}, 
  \mathbf{d}_{\Lambda},\varepsilon\right) 
  \leq (C_2/\varepsilon)^{2(N+1)\int_{\Lambda}|df|^2 dxdy + C_3 L}.
\end{equation*}
\end{proposition}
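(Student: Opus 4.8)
The idea is to linearize the space of Brody curves near the $R$-nondegenerate curve $f$ by writing every nearby $g$ as $g(z) = \exp_{f(z)} u(z)$ for a section $u$ of $f^*T\affine P^N$, then transplant the problem to the torus $\affine/\Gamma$ with $\Gamma = (L+8)(\mathbb{Z}+\mathbb{Z}\sqrt{-1})$ using Lemma \ref{lemma: constructing holomorphic sections over the torus}, and finally count $\varepsilon$-separated points by the volume argument of Example \ref{example: separated set of Banach ball} applied to the finite-dimensional space $H^0(\affine/\Gamma, \mathcal{E})$. First I would fix $\delta_2 = \delta_2(R)$ small enough that (i) Lemma \ref{lemma: nonlinear CR equation} applies on every unit disk $D_1(p)$ with $p\in D_5(\Lambda)$ for a small parameter to be chosen, and (ii) the exponential-coordinate representation $g = \exp_f u$, $|u|\lesssim \delta_2$, is well-defined and bi-Lipschitz between $\mathbf{d}$ and $\norm{u}_{L^\infty}$ on the relevant region. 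The bundle $E := f^*T\affine P^N$ carries the pulled-back Fubini--Study metric; its curvature is controlled by $|df|\leq 2$ (so $\norm{\Theta}_{C^1_\nabla}\lesssim 1$, i.e. $K$ is universal), and the $R$-nondegeneracy of $f$ gives a nonnegative $C^1$ function $\phi$ comparable to $|df|^2$ with $\langle\Theta u,u\rangle \geq \phi|u|^2$ and $\phi$ being $O(R)$-nondegenerate over $\Lambda$. Thus Condition \ref{condition: nondegeneracy of vector bundle} holds, and for $L\geq L_1(K,R)$ we obtain $\mathcal{E}$ on $\affine/\Gamma$ with $\dim_\affine H^0(\affine/\Gamma,\mathcal{E}) = \frac{1}{\pi}\int_\Lambda \mathrm{tr}(\Theta)\,dxdy + O(L)$, and the linear map $T$ with the two-sided $L^\infty$ bounds of condition (2) there. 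The key point is that $\frac{1}{\pi}\mathrm{tr}(\Theta) = (N+1)|df|^2/\pi \cdot \pi = 2(N+1)|df|^2$ up to the normalization constants; chasing the Fubini--Study normalization gives exactly $\frac{1}{\pi}\int_\Lambda \mathrm{tr}(\Theta)\,dxdy = 2(N+1)\int_\Lambda |df|^2\,dxdy$, which is where the coefficient $2(N+1)$ in the exponent comes from.

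The counting then proceeds as follows. Given $g_1,\dots,g_n$ in the $\delta_2$-ball (with respect to $\mathbf{d}_{D_5(\Lambda)}$) that are $\varepsilon$-separated in $\mathbf{d}_\Lambda$, write $g_j = \exp_f u_j$. Lemma \ref{lemma: nonlinear CR equation}, applied at every point of $\Lambda$ with a parameter $\varepsilon' = \varepsilon'(R)$ chosen after $R$ but before $\delta_2$, shows $\norm{\bar\partial u_j}_{L^\infty(\Lambda')}$ is tiny compared to $\norm{u_j}_{L^\infty}$ on a slightly larger square $\Lambda' = (-1,L+1)^2$ — more precisely $|\bar\partial u_i(p) - \bar\partial u_j(p)| \leq \varepsilon' \norm{u_i-u_j}_{L^\infty(D_1(p))}$, and taking $g_2 = f$ (so $u_2 = 0$) controls $\bar\partial u_j$ itself. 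Feeding $u_j$ into $T$ and using the second inequality of condition (2) of Lemma \ref{lemma: constructing holomorphic sections over the torus},
\[
\norm{u_i - u_j}_{L^\infty(\Lambda)} \lesssim_{K,R} \norm{T u_i - T u_j}_{L^\infty(\affine/\Gamma)} + \norm{u_i-u_j}_{L^\infty((-1,L+1)^2\setminus\Lambda)} + \norm{\bar\partial u_i - \bar\partial u_j}_{L^\infty((-1,L+1)^2)}.
\]
The first error term is handled by enlarging $\Lambda$ by a fixed amount (this is why we are allowed to control $g$ on $D_5(\Lambda)$ rather than just $\Lambda$: elliptic/Cauchy estimates propagate the $L^\infty$ bound on $u$ from $\Lambda$ slightly outward, and the $\delta_2$-smallness on the collar is automatic), and the $\bar\partial$ error term is absorbed into the left side by choosing $\varepsilon'$ small, using $\norm{u_i-u_j}_{L^\infty((-1,L+1)^2)} \lesssim_R \norm{u_i-u_j}_{L^\infty(\Lambda)}$ (again Cauchy/elliptic propagation). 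Hence $\mathbf{d}_\Lambda(g_i,g_j) > \varepsilon$ forces $\norm{Tu_i - Tu_j}_{L^\infty(\affine/\Gamma)} \gtrsim_{K,R} \varepsilon$. So the map $g_j \mapsto T u_j$ sends an $\varepsilon$-separated set in the source to an $(\varepsilon/C_2(R))$-separated set in $H^0(\affine/\Gamma,\mathcal{E})$ with respect to $\norm{\cdot}_{L^\infty(\affine/\Gamma)}$, and by Lemma \ref{lemma: separated set} we reduce to bounding $\#_{\mathrm{sep}}$ of a ball in a Banach space of dimension $\dim_\affine H^0 \cdot 2 = 2(N+1)\int_\Lambda|df|^2\,dxdy + O_K(L)$.

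Finally, the first inequality of condition (2) of Lemma \ref{lemma: constructing holomorphic sections over the torus} plus $\norm{\bar\partial u_j}_{L^\infty}\lesssim_R \delta_2 \lesssim_R 1$ bounds $\norm{Tu_j}_{L^\infty(\affine/\Gamma)}$ by a constant depending only on $R$; thus all the $Tu_j$ lie in a ball of radius $\lesssim_R 1$ in the finite-dimensional space. Example \ref{example: separated set of Banach ball} then gives
\[
\#_{\mathrm{sep}}\bigl(\{g\}, \mathbf{d}_\Lambda, \varepsilon\bigr) \leq \left(\frac{\varepsilon/C_2(R) + 2\cdot \mathrm{const}(R)}{\varepsilon/C_2(R)}\right)^{2(N+1)\int_\Lambda |df|^2\,dxdy + C(K) L} \leq (C_2(R)/\varepsilon)^{2(N+1)\int_\Lambda |df|^2\,dxdy + C_3(\varepsilon) L},
\]
where the $L$-linear error in the exponent is the $O_K(L)$ defect term from Lemma \ref{lemma: constructing holomorphic sections over the torus}(1) absorbed together with $\dim_{\mathbb{R}}$-vs-$\dim_\affine$ bookkeeping, and $C_3(\varepsilon)$ appears because converting an additive $O(L)$ in the exponent of a base-$(C_2/\varepsilon)$ power into the stated form costs a factor depending on $\varepsilon$. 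I expect the main obstacle to be bookkeeping: (a) pinning down the exact constant $2(N+1)$ via the Fubini--Study curvature normalization so that $\frac1\pi\int\mathrm{tr}\Theta$ equals $2(N+1)\int|df|^2$ and not some rescaled version, and (b) making the nonlinear-to-linear passage uniform — verifying that the single choice $\delta_2=\delta_2(R)$ simultaneously makes $\exp_f$ a good coordinate, makes Lemma \ref{lemma: nonlinear CR equation}'s error negligible on all of $\Lambda$, and lets the collar/Cauchy estimates close the loop — since the constants here must not secretly depend on $L$. Everything else (solving $\bar\partial$, Riemann--Roch, the volume count) is supplied by the lemmas already proved.
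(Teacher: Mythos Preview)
Your overall architecture matches the paper: linearize via $g=\exp_f u$, push $u$ through the linear map $T$ of Lemma \ref{lemma: constructing holomorphic sections over the torus} to land in a finite-dimensional space, and count with Example \ref{example: separated set of Banach ball}. But the treatment of the collar term $\norm{u_i-u_j}_{L^\infty((-1,L+1)^2\setminus\Lambda)}$ is a genuine gap.

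You write that this term ``is handled by enlarging $\Lambda$ by a fixed amount'' and that ``elliptic/Cauchy estimates propagate the $L^\infty$ bound on $u$ from $\Lambda$ slightly outward'', and later invoke $\norm{u_i-u_j}_{L^\infty((-1,L+1)^2)}\lesssim_R\norm{u_i-u_j}_{L^\infty(\Lambda)}$. That inequality is in the wrong direction: interior/Cauchy estimates control values on a smaller region by values on a larger one, not conversely, and the $u_j$ are not holomorphic anyway. The only generic bound available on the collar is $\norm{u_i-u_j}\lesssim\delta_2$, which is a fixed constant depending on $R$. Plugging this into the inequality from Lemma \ref{lemma: constructing holomorphic sections over the torus}(2) gives $\mathbf{d}_\Lambda(g_i,g_j)\lesssim_R\norm{Tu_i-Tu_j}+C(R)\delta_2$, so $\varepsilon$-separation on $\Lambda$ forces $\norm{Tu_i-Tu_j}\gtrsim_R\varepsilon-C(R)\delta_2$. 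For this to be positive you would need $\delta_2\lesssim_R\varepsilon$, contradicting the requirement $\delta_2=\delta_2(R)$, which is essential in the downstream application (the local formula for metric mean dimension needs the ball radius $\delta$ fixed before $\varepsilon\to 0$).

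The paper closes this gap by an extra ingredient you did not use: the sampling Lemma \ref{lemma: sampling}. One chooses $M\lesssim_\varepsilon L$ points $p_1,\dots,p_M$ in the collar and augments the target to $H^0(\affine/\Gamma,\mathcal{E})\oplus\bigoplus_{n}E_{p_n}$, sending $g\mapsto(Tu,u(p_1),\dots,u(p_M))$. Lemma \ref{lemma: sampling} then bounds the collar norm by $\max_n|u_i(p_n)-u_j(p_n)|$ plus an error of size $\varepsilon\cdot\delta_2$, which is now harmless. This is also the real reason $C_3$ depends on $\varepsilon$: the extra $2NM\lesssim_\varepsilon L$ in the target dimension. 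Your explanation that $C_3(\varepsilon)$ arises from ``converting an additive $O(L)$ in the exponent'' is not right; the Riemann--Roch defect $O_K(L)$ is universal and would only produce a universal $C_3$. Finally, a minor point: $\frac{1}{\pi}\int_\Lambda\mathrm{tr}(\Theta)=(N+1)\int_\Lambda|df|^2$, not $2(N+1)\int_\Lambda|df|^2$; the factor $2$ comes later from $\dim_{\mathbb{R}}=2\dim_\affine$ when applying Example \ref{example: separated set of Banach ball}.
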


\begin{proof}
We can assume $\Lambda=[0,L]^2$ without loss of generality.
We define a Hermitian holomorphic vector bundle $(E,h)$ over the plane as the pull-back of the tangent bundle $T\affine P^N$ 
and the Fubini--Study metric by the map $f$.
Let $\nabla$ and $\Theta =[\nabla_{\partial/\partial z},\nabla_{\partial/\partial \bar{z}}]$ be its
canonical connection and curvature. 
The Fubini--Study metric is a K\"{a}hler--Einstein metric, and its holomorphic bisectional curvature
is bounded from below by $2\pi$.
Then we have 
\begin{equation*} 
  \mathrm{tr}(\Theta)=\pi(N+1)|df|^2, \quad \langle \Theta u, u\rangle \geq \pi |df|^2 |u|^2 \quad (u\in E).
\end{equation*}
The function $\phi:= \pi |df|^2$ is $R$-nondegenerate over $\Lambda$ because of the nondegeneracy of $f$.
As $\moduli_2(\affine P^N)$ is compact, there exists a universal positive constant $K$ satisfying 
\[ \norm{\Theta}_{C^1_\nabla} \leq K, \quad  \norm{\phi}_{C^1} \leq K.\]
Therefore $(E,h)$ satisfies Condition \ref{condition: nondegeneracy of vector bundle}.
Let $L_1=L_1(K,R)$ be the positive constant introduced in Lemma \ref{lemma: constructing holomorphic sections over the torus}.
If $1\leq L\leq L_1$, then the statement of the proposition is obvious because $C_3(\varepsilon)$ can be chosen 
arbitrarily large. 
So we can assume $L\geq L_1$ and use Lemma \ref{lemma: constructing holomorphic sections over the torus}.
It provides us a Hermitian holomorphic vector bundle $\mathcal{E}$ over the torus 
$\affine/\Gamma = \affine/(L+8)(\mathbb{Z}+\sqrt{-1}\mathbb{Z})$ and a linear map 
$T:C^\infty((-1,L+1)^2,E)\to H^0(\affine/\Gamma,\mathcal{E})$ satisfying the following conditions.
(Note $\mathrm{tr}(\Theta)=\pi(N+1)|df|^2$.)
\begin{itemize}
  \item $\dim_\affine H^0(\affine/\Gamma,\mathcal{E}) = (N+1)\int_\Lambda |df|^2 dxdy + O(L)$
  where the implicit constant in $O(L)$ is universal.
  \item For any $u\in C^\infty((-1,L+1)^2,E)$
  \begin{equation} \label{eq: bound on Tu by u}
   \norm{Tu}_{L^\infty(\affine/\Gamma)} \lesssim_R \norm{u}_{L^\infty((-1,L+1)^2)} 
   + \norm{\bar{\partial}u}_{L^\infty((-1,L+1)^2)}, 
  \end{equation}
  \begin{equation} \label{eq: bound on u by Tu}
    \norm{u}_{L^\infty(\Lambda)} \leq C\left(\norm{Tu}_{L^\infty(\affine/\Gamma)}+\norm{u}_{L^\infty((-1,L+1)^2\setminus \Lambda)}
    +\norm{\bar{\partial}u}_{L^\infty((-1,L+1)^2)}\right),
  \end{equation}
  where $C=C(R)>1$ is a constant depending only on $R$.
\end{itemize}

By Lemma \ref{lemma: sampling}
we can choose points $p_1,\dots,p_M\in [-2,L+2]^2\setminus \Lambda$ with $M\lesssim_{\varepsilon} L$ such that
every $C^1$ section $u$ of $E$ over $D_4(\Lambda)$ satisfies 
\begin{equation}\label{eq: sampling estimate}
  \norm{u}_{L^\infty((-2,L+2)^2\setminus \Lambda)} \leq \max_{1\leq n\leq M}|u(p_n)|
  + \varepsilon \left(\norm{u}_{L^\infty(\partial_4 \Lambda)} + \norm{\bar{\partial}u}_{L^\infty(\partial_4\Lambda)}\right).
\end{equation}
Here recall that $\partial_4\Lambda$ is  
the set of $z\in \affine$ satisfying $D_4(z)\cap \partial\Lambda\neq \emptyset$.

Take a small positive number $\delta_2$, 
and let $X$ be the set of $g\in \moduli_2(\affine P^N)$
satisfying $\mathbf{d}_{D_5(\Lambda)}(f(z),g(z))\leq \delta_2$.
We define a map $S$ from $X$ to $H^0(\affine/\Gamma, \mathcal{E})\oplus \bigoplus_{n=1}^M E_{p_n}$
as follows.
Let $g\in X$ and write it as $g(z)=\exp_{f(z)}u(z)$ with $u\in C^\infty(D_5(\Lambda),E)$ and
$|u|\leq \delta_2$. 
We set 
\[ S(g) := (T(u), u(p_1),\dots,u(p_M)).\]
(Strictly speaking, $T(u)$ means $T(u|_{(-1,L+1)^2})$.)
We define a norm $\nnorm{\cdot}$ on $H^0(\affine/\Gamma, \mathcal{E})\oplus \bigoplus_{n=1}^M E_{p_n}$
by $\nnorm{(v,v_1,\dots,v_M)} := \norm{v}_{L^\infty(\affine/\Gamma)}+\max_{1\leq n\leq M}|v_n|$.
From (\ref{eq: bound on Tu by u}) and Lemma \ref{lemma: nonlinear CR equation}
we have $\nnorm{S(g)}\lesssim_R \delta_2$.
So we can assume that the image of $S$ is contained in the unit ball of $H^0(\affine/\Gamma,\mathcal{E})\oplus 
\bigoplus_{n=1}^M E_{p_n}$.

Take $g_1$ and $g_2$ in $X$, and let $g_i=\exp_f u_i$ with $u_i\in C^\infty(D_5(\Lambda),E)$ and
$|u_i|\leq \delta_2$.
By the estimate (\ref{eq: bound on u by Tu}) 
\begin{equation*}
   \begin{split}
   \norm{u_1-u_2}_{L^\infty(\Lambda)} \leq C\Bigl(\norm{Tu_1-Tu_2}_{L^\infty(\affine/\Gamma)}
   + \norm{u_1-u_2}_{L^\infty((-1,L+1)^2\setminus \Lambda)} \\
   + \norm{\bar{\partial}u_1-\bar{\partial}u_2}_{L^\infty((-1,L+1)^2)}\Bigr).
   \end{split}
\end{equation*}
By Lemma \ref{lemma: nonlinear CR equation}, we can choose $\delta_2=\delta_2(R)$ so small that 
\[ \norm{\bar{\partial}u_1-\bar{\partial}u_2}_{L^\infty((-1,L+1)^2)}
   \leq \frac{1}{2C}\norm{u_1-u_2}_{L^\infty((-2,L+2)^2)}.\]
Then 
\[ \norm{u_1-u_2}_{L^\infty(\Lambda)} \leq 2C\norm{Tu_1-Tu_2}_{L^\infty(\affine/\Gamma)}
   + (2C+1)\norm{u_1-u_2}_{L^\infty((-2,L+2)^2\setminus \Lambda)}.\]
Applying (\ref{eq: sampling estimate}) to $u=u_1-u_2$, the term $\norm{u_1-u_2}_{L^\infty((-2,L+2)^2\setminus \Lambda)}$
is bounded by
\[  \max_{1\leq n\leq M}|u_1(p_n)-u_2(p_n)|
   + \varepsilon \left(\norm{u_1-u_2}_{L^\infty(\partial_4\Lambda)}
   +\norm{\bar{\partial}u_1-\bar{\partial}u_2}_{L^\infty(\partial_4\Lambda)}\right). \]
By $|u_i|\leq \delta_2$ in $D_5(\Lambda)$ and Lemma \ref{lemma: nonlinear CR equation},
$\norm{u_1-u_2}_{L^\infty(\partial_4\Lambda)}
   +\norm{\bar{\partial}u_1-\bar{\partial}u_2}_{L^\infty(\partial_4\Lambda)}\lesssim \delta_2$.
Thus 
\[ \norm{u_1-u_2}_{L^\infty(\Lambda)}\leq (2C+1)\nnorm{S(g_1)-S(g_2)} + \const_R\cdot \delta_2 \varepsilon.\]
As $d(g_1(z),g_2(z))\lesssim |u_1(z)-u_2(z)|$, we can choose $\delta_2\ll 1$ so that 
\[ \mathbf{d}_\Lambda(g_1,g_2) \leq C'\nnorm{S(g_1)-S(g_2)} + \frac{\varepsilon}{2}.\]
Here $C'=C'(R)>1$ depends only on $R$.
Then by Lemma \ref{lemma: separated set}
\[ \#_{\mathrm{sep}}(X,\mathbf{d}_\Lambda,\varepsilon) 
   \leq \#_{\mathrm{sep}}\left(S(X),\nnorm{\cdot},\frac{\varepsilon}{2C'}\right).\]
$S(X)$ is contained in the unit ball of $H^0(\affine/\Gamma, \mathcal{E})\oplus\bigoplus_{n=1}^M E_{p_n}$. So this is bounded by   
\begin{equation*}
   \#_{\mathrm{sep}}
   \left(B_1\left(H^0(\affine/\Gamma,\mathcal{E})\oplus\bigoplus_{n=1}^M E_{p_n}\right),\nnorm{\cdot},\frac{\varepsilon}{2C'}\right)
   \leq  
   \left(\frac{6C'}{\varepsilon}\right)^{\dim_\mathbb{R}\left(H^0(\affine/\Gamma,\mathcal{E})\oplus\bigoplus_{n=1}^M E_{p_n}\right)}.
\end{equation*}   
Here we have used Example \ref{example: separated set of Banach ball}.
The real dimension of $H^0(\affine/\Gamma,\mathcal{E})\oplus\bigoplus_{n=1}^M E_{p_n}$ is equal to 
\[ 2\dim_\affine H^0(\affine/\Gamma,\mathcal{E}) + 2NM = 2(N+1)\int_\Lambda |df|^2 dxdy + O(L) +2NM. \]
Recall that $M\lesssim_\varepsilon L$ and the implicit constant in $O(L)$ is universal.
So this proves the proposition.
\end{proof}

\vspace{0.5cm}

\address{ Masaki Tsukamoto \endgraf
Department of Mathematics, Kyoto University, Kyoto 606-8502, Japan}

\textit{E-mail address}: \texttt{tukamoto@math.kyoto-u.ac.jp}


\begin{thebibliography}{100}


\bibitem{Barrett--Eremenko}
M. Barrett, A. Eremenko,
A generalization of a theorem of Clunie and Hayman, 
Proc. Amer. Math. Soc. \textbf{140} (2012) 1397-1402.



\bibitem{Brody}
R. Brody,
Compact manifolds and hyperbolicity,
Trans. Amer. Math. Soc. \textbf{235} (1978) 213-219.




\bibitem{Costa}
B.F.P. Da Costa,
Deux exemples sur la dimension moyenne d'un espace de courbes de Brody,
arXiv:1110.6082.



\bibitem{Costa--Duval}
B.F.P. Da Costa, J. Duval,
Sur les courbes de Brody dans $P^n(\mathbf{C})$,
Math. Ann. \textbf{355} (2013) 1593--1600.





\bibitem{Duval}
J. Duval,
Sur le lemma de Brody,
Invent. Math. \textbf{173} (2008) 305-314.






\bibitem{Eremenko}
A. Eremenko,
Normal holomorphic curves from parabolic regions to projective spaces,
preprint, Purdue university (1998), arXiv: 0710.1281.








\bibitem{Gilbarg--Trudinger}
D. Gilbarg, N. S. Trudinger,
Elliptic partial differential equations of second order,
Reprint of the 1998 edition,
Classics in Mathematics,
Springer-Verlag, Berlin (2001).







\bibitem{Gromov}
M. Gromov, 
Topological invariants of dynamical systems and spaces of holomorphic maps: I,
Math. Phys. Anal. Geom. \textbf{2} (1999) 323-415.



\bibitem{Gutman}
Y. Gutman,
Embedding $\mathbb{Z}^k$-actions in cubical shifts and 
$\mathbb{Z}^k$-symbolic extensions,
Ergodic Theory Dynam. Systems \textbf{31} (2011) 383-403.






\bibitem{Lindenstrauss}
E. Lindenstrauss,
Mean dimension, small entropy factors and an embedding theorem,
Inst. Hautes \'{E}tudes Sci. Publ. Math. \textbf{89} (1999) 227-262.




\bibitem{Lindenstrauss--Weiss}
E. Lindenstrauss, B. Weiss,
Mean topological dimension,
Israel J. Math. \textbf{115} (2000) 1-24.

 

\bibitem{Matsuo--Tsukamoto Brody curves}
S. Matsuo, M. Tsukamoto,
Brody curves and mean dimension,
J. Amer. Math. Soc. 
DOI: \url{http://dx.doi.org/10.1090/S0894-0347-2014-00798-0#sthash.RzSxNpa7.dpuf}
(to appear in print).



\bibitem{Ornstein--Weiss}
D.S. Ornstein, B. Weiss,
Entropy and isomorphism theorems for actions of amenable groups,
J. Analyse Math. \textbf{48} (1987) 1-141








\bibitem{Tsukamoto moduli space of Brody curves}
M. Tsukamoto,
Moduli space of Brody curves, energy and mean dimension,
Nagoya Math. J. \textbf{192} (2008) 27-58.



\bibitem{Tsukamoto packing}
M. Tsukamoto,
A packing problem for holomorphic curves,
Nagoya Math. J. \textbf{194} (2009) 33-68.







\bibitem{Tsukamoto deformation}
M. Tsukamoto,
Deformation of Brody curves and mean dimension,
Ergod. Th. \& Dynam. Sys. \textbf{29} (2009) 1641-1657.




\bibitem{Tsukamoto energy density}
M. Tsukamoto,
Remark on energy density of Brody curves,
Proc. Japan Acad. Ser. A \textbf{88} (2012) 127-131.


\bibitem{Tsukamoto Yang--Mills dynamics}
M. Tsukamoto, 
Large dynamics of Yang--Mills theory: mean dimension formula,
preprint, arXiv:1407.2058.


\bibitem{Winkelmann}
J. Winkelmann, 
On Brody and entire curves,
Bull. Soc. math. France,
\textbf{135} (2007) 25-46.




\bibitem{Yosida}
K. Yosida,
On a class of meromorphic functions,
Proc. Phys.-Math. Soc. Japan \textbf{16} (1934) 227-235.




\end{thebibliography}
\end{document}